\newcommand{\Ccal}{{\mathcal C}}
\newcommand{\cC}{{\mathcal C}}
\newcommand{\Dcal}{{\mathcal D}}
\newcommand{\cD}{{\mathcal D}}
\newcommand{\cE}{{\mathcal E}}
\newcommand{\cH}{{\mathcal H}}
\newcommand{\cM}{{\mathcal M}}
\newcommand{\cV}{{\mathcal V}}
\newcommand{\CC}{\mathbb{C}}
\newcommand{\RR}{\mathbb{R}}
\def\onto{\twoheadrightarrow}           
\newcommand{\mcm}[3]{\newcommand{#1}[#2]{{\ensuremath{#3}}}} 
\mcm{\restric}{0}{\upharpoonright}
\numberwithin{equation}{section}
\newtheorem{theorem}[equation]{Theorem}
\newtheorem{lemma}[equation]{Lemma}
\newtheorem{corollary}[equation]{Corollary}
\newtheorem{cor}[equation]{Corollary}
\newtheorem*{theorem*}{Theorem}
\theoremstyle{definition}
\newtheorem{defn}[equation]{Definition}
\newtheorem{example}[equation]{Example}
\theoremstyle{remark}
\newtheorem{remark}[equation]{Remark}
\newcommand{\involution}{\overline}
\newcommand{\noinvolution}{}
\begin{document}
\title{Matroids over Hyperfields}
\author{Matthew Baker}
\email{mbaker@math.gatech.edu}
\address{School of Mathematics,
          Georgia Institute of Technology, USA}
\author{Nathan Bowler}
\email{Nathan.Bowler@uni-hamburg.de}
\address{Department of Mathematics, Universit{\"a}t Hamburg, Germany}

\date{\today}

\thanks{The first author's research was supported by the National Science Foundation research grant DMS-1529573.}

\begin{abstract}
We present an algebraic framework which simultaneously generalizes the notion of linear subspaces, matroids, valuated matroids, and oriented matroids, as well as phased matroids in the sense of Anderson-Delucchi.  
We call the resulting objects {\em matroids over hyperfields}.  In fact, there are (at least) two natural notions of matroid in this context, which we call {\em weak} and {\em strong} matroids.  
We give ``cryptomorphic'' axiom systems for such matroids in terms of circuits, Grassmann-Pl{\"u}cker functions, and dual pairs, and establish some basic duality theorems.
{ We also show that if $F$ is a doubly distributive hyperfield then the notions of weak and strong matroid over $F$ coincide.}
\end{abstract}

\maketitle

\section{Introduction} \label{sec:intro}

Matroid theory is a remarkably rich part of combinatorics with links to algebraic geometry, optimization, and many other areas of mathematics. Matroids provide a useful abstraction of the notion of linear independence in vector spaces, and can be thought of as combinatorial analogues of linear subspaces of $K^m$, where $K$ is a field.
A key feature of matroids is that they possess a duality theory which abstracts the concept of orthogonal complementation from linear algebra.
There are a number of important enhancements of the notion of matroid, including oriented matroids, valuated matroids, and phased matroids in the sense of Anderson-Delucchi.  
In this paper, we provide a simple algebraic framework for unifying all of these enhancements, introducing what we call {\bf matroids over hyperfields}.

\medskip

It turns out that there are (at least) two natural notions of matroids over a hyperfield $F$, which we call {\bf weak $F$-matroids} and {\bf strong $F$-matroids}.\footnote{In arXiv versions 1 through 3 of the present paper, the first author incorrectly claimed that weak and strong matroids coincide over all hyperfields.  See \S\ref{sec:meaculpa} for a discussion of this error and how it has been rectified in the present version.}  
{ In this paper we give ``cryptomorphic'' axiom systems for both kinds of $F$-matroids and present examples showing that the two notions of $F$-matroid diverge for certain hyperfields $F$.  On the other hand, if $F$ is {\bf doubly distributive} we show that the notions of weak and strong $F$-matroid coincide.}

\subsection{Hyperfields}

A (commutative) {\bf hyperring} is an algebraic structure akin to a commutative ring but where addition is allowed to be multivalued.  There is still a notion of additive inverse, but rather than requiring that $x$ plus $-x$ equals $0$,
one merely assumes that $0$ {\em belongs to the set} $x$ plus $-x$. A hyperring in which every nonzero element has a multiplicative inverse is called a {\bf hyperfield}.

Multivalued algebraic operations might seem exotic, but in fact hyperrings and hyperfields appear quite naturally in a number of mathematical settings and their properties have been explored by numerous authors in recent years.

The simplest hyperfield which is not a field is the so-called {\bf Krasner hyperfield} ${\mathbb K}$, which as a multiplicative monoid consists of $0$ and $1$ with the usual multiplication rules.  (This monoid is often denoted ${\mathbb F}_1$ in the algebraic geometry literature.)
The addition law is almost the usual one as well, except that $1$ plus $1$ is defined to be the {\em set} $\{ 0,1 \}$.
Our definition of matroids over hyperfields will be such that a matroid over ${\mathbb K}$ turns out to be the same thing as a matroid in the usual sense.  

A field $K$ can trivially be considered as a hyperfield, and with our definitions a (strong or weak) matroid over $K$ will be the same thing as a linear subspace of $K^m$ for some positive integer $m$.

Some other hyperfields of particular interest are as follows (we write $x \boxplus y$ for the sum of $x$ and $y$ to emphasize that the sum is a set and not an element):

\begin{itemize}
\item (Hyperfield of signs) Let ${\mathbb S} := \{ 0, 1, -1 \}$ with the usual multiplication law and hyperaddition defined by $1 \boxplus 1 = \{ 1 \}$, $-1 \boxplus -1 = \{ -1 \}$, $x \boxplus 0 = 0 \boxplus x = \{ x \}$, and $1 \boxplus -1 = -1 \boxplus 1 = \{ 0, 1, -1 \}$.  Then ${\mathbb S}$ is a hyperfield, called the {\bf hyperfield of signs}. 

\item (Tropical hyperfield) Let ${\mathbb T}_+ := {\mathbb R} \cup \{ -\infty \}$, and for $a,b \in {\mathbb T}_+$ define their product by the rule $a\odot b := a+b$.
Addition is defined by setting $a \boxplus b = \max (a,b)$ if $a \neq b$ and $a \boxplus b = \{ c \in {\mathbb T}_+ \; | \; c \leq a \}$ if $a = b$.  
Thus $0$ is a multiplicative identity element, $-\infty$ is an additive identity, and ${\mathbb T}_+$ is a hyperfield called the {\bf tropical hyperfield}.  O. Viro has illustrated the utility of the hyperfield ${\mathbb T}_+$ for the foundations of tropical geometry in several interesting papers (see e.g. \cite{ViroDequant,Viro}); we mention in particular that $-\infty$ belongs to the hypersum $a_1 \boxplus \cdots \boxplus a_n$ of $a_1,\ldots,a_n \in {\mathbb T}_+$ ($n \geq 2$) if and only if the maximum of the $a_i$ occurs at least twice.  

\item (Phase hyperfield) Let ${\mathbb P} := S^1 \cup \{ 0 \}$, where $S^1$ denotes the complex unit circle.  Multiplication is defined as usual (so corresponds on $S^1$ to addition of phases).  
The hypersum $x \boxplus y$ of nonzero elements $x,y$ is defined to be $\{ 0, x, -x \}$ if $y=-x$, and otherwise to consist of all points in the shorter of the two open arcs of $S^1$ connecting $x$ and $y$.  When one of $x$ or $y$ is zero, we set $x \boxplus 0 = 0 \boxplus x = \{ x \}$. Then ${\mathbb P}$ is a hyperfield, called the {\bf phase hyperfield}. 

\item (Triangle hyperfield) Let ${\mathbb V}$ be the set ${\mathbb R}_{\geq 0}$ of nonnegative real numbers with the usual multiplication and the hyperaddition rule
\[
a \boxplus b := \{ c \in {\mathbb R}_{\geq 0} \; : \; |a-b| \leq c \leq a+b \}.
\]
(In other words, $a \boxplus b$ is the set of all real numbers $c$ such that there exists a Euclidean triangle with side lengths $a, b, c$.)
\end{itemize}

With our general definition of matroids over hyperfields, we will find for example that:

\begin{itemize}
\item A (strong or weak) matroid over ${\mathbb S}$ is the same thing as an {\bf oriented matroid} in the sense of Bland--Las Vergnas \cite{BlandLasVergnas}.
\item A (strong or weak) matroid over ${\mathbb T}$ is the same thing as a {\bf valuated matroid} in the sense of Dress--Wenzel \cite{DressWenzelVM}.
\item There exists a weak matroid over ${\mathbb V}$ which is not a strong matroid.
\end{itemize}

Anderson and Delucchi consider aspects of both weak and strong matroids over ${\mathbb P}$ in \cite{AndersonDelucchi}, but there is a mistake in their proof that the circuit, Grassmann--Pl{\"u}cker, and dual pair axioms for phased matroids are all equivalent (cf.~the appendix to this paper). A counterexample due to Daniel Wei\ss auer shows that weak ${\mathbb P}$-matroids are not the same thing as strong ${\mathbb P}$-matroids (see Example~\ref{eg:danscex}).


Both weak and strong matroids over hyperfields admit a duality theory which generalizes the existing duality theories in each of the above examples.  All known proofs of the basic duality theorems for oriented or valuated matroids are rather long and involved. 
One of our goals is to give a unified treatment of such duality results so that one only has to do the hard work once.

\subsection{Cryptomorphic axiomatizations}

Matroids famously admit a number of ``cryptomorphic'' descriptions, meaning that there are numerous axiom systems for them which turn out to be non-obviously equivalent. 
Two of the most useful cryptomorphic axiom systems for matroids (resp. oriented, valuated) are the descriptions in terms of {\em circuits} (resp. signed, valuated circuits) and {\em basis exchange axioms} (resp. chirotopes, valuated bases).  A third (less well-known but also very useful) cryptomorphic description in all of these contexts involves {\em dual pairs}.  We generalize all of these cryptomorphic descriptions (for both weak and strong matroids over hyperfields) with a single set of theorems and proofs.  

The circuit description of strong (resp. weak) matroids over hyperfields is a bit technical to state, see \S\ref{sec:MatroidsOverHyperfields} for the precise definition.  Roughly speaking, though, if $F$ is a hyperfield, a subset $\cC$ of $F^m$ not containing the zero-vector is the set of {\bf $F$-circuits of a weak matroid with coefficients in $F$}  if it is stable under scalar multiplication, satisfies a support-minimality condition, and obeys a {\em modular elimination law}.  (The {\bf support} of $X \in \cC$ is the set of all $i$ such that $X_i \neq 0$.)  
The ``modular elimination'' property means that
if the supports of $X,Y \in \cC$ are ``sufficiently close'' (in a precise poset-theoretic sense) and $X_i = -Y_i$ for some $i$, then one can find a ``quasi-sum'' $Z \in \cC$ with $Z_i=0$ and $Z_j \in X_j \boxplus Y_j$ for all $j$.
The underlying idea is that the $F$-circuits of an $F$-matroid behave like the set of support-minimal nonzero vectors in a linear subspace of a vector space.
The most subtle part of the definition is the restriction that the supports of $X$ and $Y$ be sufficiently close; this restriction is not encountered ``classically'' when working with matroids, oriented matroids, or valuated matroids, but it is necessary in the general context in which we work, as has already been demonstrated by Anderson and Delucchi in their work on phased matroids \cite{AndersonDelucchi}.  They give an example of a phased matroid which satisfies modular elimination but not a more robust elimination property.  
In  \S\ref{sec:MatroidsOverHyperfields} we also present a stronger and somewhat more technical set of conditions characterizing 
the set of $F$-circuits of a {\bf strong} $F$-matroid.

In the general context of matroids over hyperfields, the simplest and most useful way to state the ``basis exchange'' or chirotope / phirotope axioms is in terms of what we call {\em Grassmann-Pl{\"u}cker functions}.  
A nonzero function $\varphi : F^r \to F$ is called a {\bf Grassmann-Pl{\"u}cker function} if it is alternating and satisfies (hyperfield analogues of) the basic algebraic identities satisfied by the determinants of the $(r \times r)$-minors of an $r \times m$ matrix of rank $r$ (see \S\ref{sec:GPsection} for a precise definition).
By a rather complicated argument, the definition of strong $F$-matroids in terms of strong $F$-circuits turns out to be cryptomorphically equivalent to the definition in terms of Grassmann-Pl{\"u}cker functions.
We also define {\em weak Grassmann-Pl{\"u}cker functions} and relate them to weak $F$-circuits.

The ``dual pair'' description of $F$-matroids is perhaps the easiest one to describe in a non-technical way, assuming that one already knows what a matroid is.  If $\underline{M}$ is a matroid in the usual sense, we call a subset $\cC$ of $F^m$ not containing $0$ and closed under nonzero scalar
multiplication an {\bf $F$-signature} of $\underline{M}$ if the support of $\cC$ in $E=\{ 1,\ldots, m \}$ is the set of circuits of $\underline{M}$.  The {\bf inner product} of two vectors $X,Y \in F^m$ 
is $X \odot Y := \bigboxplus_{i=1}^m X_i \odot \involution{Y}_i$, and we call $X$ and $Y$ {\bf orthogonal} (written $X \perp Y$) if $0 \in X \odot Y$.  A pair $(\cC,\cD)$ consisting of an $F$-signature $\cC$ of $\underline{M}$ and an $F$-signature $\cD$ of the dual matroid $\underline{M}^*$ is called a {\bf dual pair} if $X \perp Y$ for all $X \in \cC$ and $Y \in \cD$.  By a rather complex chain of reasoning, it turns out that a strong $F$-matroid in either of the above two senses is equivalent to a dual pair $(\cC,\cD)$ as above.  We also define {\em weak dual pairs} and relate them to weak $F$-circuits and weak Grassmann-Pl{\"u}cker functions.

In the recent preprint \cite{AndersonVectors}, Laura Anderson proves that strong $F$-matroids can be characterized in terms of a cryptomorphically equivalent set of {\em vector axioms}.

\subsection{Duality and functoriality}

If $\cC$ is the collection of strong $F$-circuits of an $F$-matroid $M$ and $(\cC,\cD)$ is a dual pair of $F$-signatures of the matroid $\underline{M}$ underlying $M$ (whose circuits are the supports of the $F$-circuits of $M$), it turns out that $\cD$ is precisely the set of (non-empty) support-minimal elements of the orthogonal complement of $\cC$ in $F^m$, and $\cD$ forms the set of $F$-circuits of a strong $F$-matroid $M^*$ which we call the {\bf dual strong $F$-matroid}.
Duality behaves as one would hope: for example $M^{**}=M$, duality is compatible in the expected way with the notions of deletion and contraction, and the underlying matroid of the dual is the dual of the underlying matroid. 
There is a similar, and similarly behaved, notion of duality for weak $F$-matroids.

Matroids over hyperfields admit a useful push-forward operation: given a hyperfield homomorphism $f : F \to F'$ and a strong (resp. weak) $F$-matroid $M$, there is an induced strong (resp. weak) $F'$-matroid $f_* M$ which can be defined using any of the cryptomorphically equivalent 
axiomatizations.  The ``underlying matroid'' construction coincides with the push-forward of an $F$-matroid $M$ to the Krasner hyperfield ${\mathbb K}$ (which is a final object in the category of hyperfields) 
via the canonical homomorphism $\psi : F \to {\mathbb K}$.  
If $\sigma : {\mathbb R} \to {\mathbb S}$ is the map taking a real number to its sign and $W \subseteq {\mathbb R}^m$ is a linear subspace (considered in the natural way as an ${\mathbb R}$-matroid), the push-forward $\sigma_*(W)$ coincides with the oriented matroid which one traditionally associates to $W$.  Similarly, if $v : K \to {\mathbb T}$ is the valuation on a non-Archimedean field and $W \subseteq K^m$ is a linear subspace,
$v_*(W)$ is just the {\bf tropicalization} of $W$ considered as a {\em valuated matroid} (cf.~\cite{MaclaganSturmfels}).  
There is a similar story for phased matroids and the natural ``phase map'' $p: {\mathbb C} \to {\mathbb P}$.
If $\phi : K \to K'$ is an embedding of fields, the pushforward $\phi_*(M)$ of a $K$-matroid $M$ corresponding to a linear subspace $W \subseteq K^m$ is the $K'$-matroid corresponding to the linear subspace $W \otimes_{K} K' \subseteq (K')^m$.

\subsection{Relation to the work of Dress and Wenzel}

In \cite{Dress}, Andreas Dress introduced the notion of a {\bf fuzzy ring} and defined matroids over such a structure, showing that linear subspaces, matroids in the usual sense, and oriented matroids are all examples of matroids over a fuzzy ring.
In \cite{DressWenzelVM}, Dress and Wenzel introduced the notion of {\em valuated matroids} as a special case of matroids over a fuzzy ring.  
The results of Dress and Wenzel in \cite{Dress,DressWenzelGP,DressWenzelVM} include a duality theorem and a cryptomorphic characterization
of matroids over fuzzy rings in terms of Grassmann-Pl{\"u}cker functions.  (They also work with possibly infinite ground sets, whereas for simplicity we restrict ourselves to the finite case.)  

In their recent preprint \cite{GiansiracusaJunLorscheid}, Jeff Giansiracusa, Jaiung Jun, and Oliver Lorscheid show that there is a fully faithful functor from hyperfields to fuzzy rings which induces an equivalence between the theory of strong matroids over a hyperfield and the theory of matroids over the corresponding fuzzy ring.  More precisely, their functor induces an equivalence of categories between hyperfields and {\em field-like} fuzzy rings which identifies strong matroids over the former with matroids over the latter.

{ We use theorems of Dress and Wenzel from \cite{DressWenzelPM} to show that if $F$ is a {\em doubly distributive} hyperfield (or, more generally, a {\em perfect} hyperfield, cf.~\S{\ref{sec:perfect}} for the definition), the notions of weak and strong $F$-matroid coincide.}

Although matroids over fuzzy rings are somewhat more general than matroids over hyperfields,
we believe our work has some advantages over the Dress--Wenzel theory, including the fact that
(according to {\em MathSciNet}) few authors besides Dress and Wenzel themselves have studied or used their notion of fuzzy ring, whereas there are dozens of papers in the literature concerned with hyperfields (including the recent interesting work of Connes--Consani \cite{ConnesConsaniAbsolute,ConnesConsani} and Jun \cite{JunHyperringScheme,JunValuations}).
{ It is also our experience that the definitions and axioms for matroids over hyperfields are much simpler and more intuitive to work with than the corresponding notions for fuzzy rings.}

{ In a future revision, we plan to generalize the results in the present paper to matroids over a class of algebraic objects strictly more general than fuzzy rings, but which retain much of the simplicity afforded by the language of hyperfields.  The generalized algebraic objects will also include, as a special case, partial fields in the sense of \cite{PvZLifts,PvZSkew}.}

\subsection{Relation to the work of Anderson and Delucchi}

While the proofs of our main theorems are somewhat long and technical, in principle a great deal of the hard work has already been done in \cite{AndersonDelucchi}, so on a number of occasions we merely point out that a certain proof from \cite{AndersonDelucchi} goes through {\em mutatis mutandis} in the general setting of matroids over hyperfields.  (By way of contrast, the proofs in the standard works on oriented and valuated matroids tend to rely on special properties of the corresponding hyperfields which do not readily generalize.)

\subsection{Other related work}

Despite the formal similarity in their titles, the theory in this paper generalizes matroids in a rather different way from the paper ``Matroids over a Ring'' by Fink and Moci \cite{FinkMoci}.  For example, if $K$ is any field, a matroid over $K$ in the sense of Fink--Moci is just a matroid in the usual sense (independent of $K$), while for us a matroid over $K$ is a linear subspace of $K^m$.  The work of Fink--Moci generalizes, among other things, the concept of {\em arithmetic matroids}, which we do not discuss.  


The thesis of Bart Frenk \cite{Frenk} deals with matroids over certain kinds of algebraic objects which he calls tropical semifields; these are defined as sub-semifields of ${\mathbb R} \cup \{ \infty \}$.  Matroids over tropical semifields include, as special cases, both matroids in the traditional sense and valuated matroids, but not for example oriented matroids, linear subspaces of $K^m$ for a field $K$, or phased matroids.  Tropical semifields are a particular special case of idempotent semifields, and matroids over the latter are the subject of an interesting recent paper by the Giansiracusa brothers \cite{GiansiracusaGrassmann}.  They characterize matroids over idempotent semifields in a way which seems unlikely to generalize to the present setting of hyperfields.  There is also a close connection between the tropical hyperfield ${\mathbb T}$ and the ``supertropical semiring'' of Izhakian--Rowen \cite{IRsupertropicalalgebra, IRmatrixalgebra}; roughly speaking, the map sending a ghost element of the supertropical semiring to the set of all tangible elements less than or equal to it identifies the two structures.

\subsection{A note on the previous arXiv versions of this paper}
\label{sec:meaculpa}

In arXiv versions 1 through 3 of the present paper (in which the first author was the sole author), there is a serious error which is related to the gap in \cite{AndersonDelucchi} mentioned above.  The second author noticed this mistake and found the counterexample discussed in \S\ref{sec:counterexample} below.  This made it clear that there are in fact at least two distinct notions of matroids over hyperfields (which we call ``weak'' and ``strong''), each of which admits a number of cryptomorphically equivalent axiomatizations.  The present version of the paper is our attempt to  correctly paint the landscape of matroids over hyperfields.

The problem with the previous versions of the present work occurs in the proof of Theorem 6.19 on page 29 of arXiv version 3. Shortly before the end of the proof, one finds the equation 
\[
X(e) \odot Y(e) = - X'(e) \odot Y(e) = \bigboxplus_{g \neq e} X'(g) \odot Y(g).
\] 
However, the term on the right is a set rather than a single element\footnote{When $|\underline{X} \cap \underline{Y}| \leq 3$, the proof of Theorem 6.19 goes through because in that case the hypersum $\bigboxplus_{g \neq e} X'(g) \odot Y(g)$ is single-valued (as there is just one element other than $e$ in $\underline{X}' \cap \underline{Y}$).} so the second equality sign should be $\in$ rather than $=$.
Unfortunately, this containment is not sufficient to give the desired result; indeed, the ``desired result'' is false as shown in \S\ref{sec:counterexample} below.

\subsection{Structure of the paper}
We define hyperfields in Section~\ref{sec:hyperring} and discuss some key examples.  In Section~\ref{sec:MatroidsOverHyperfields} we present different axiom systems for matroids over hyperfields, formulate a result saying that they are all cryptomorphically equivalent, and state the main results of duality theory.  
Proofs of the main theorems are deferred to Section~\ref{sec:proofsection}.
Section~\ref{sec:functoriality} contains the definition of hyperfield homomorphisms along with a discussion of the push-forward operations on $F$-matroids. 
{ 
Section~\ref{sec:perfect} shows that weak and strong $F$-matroids coincide over perfect hyperfields, and that doubly distributive hyperfields are perfect.
}
There is a brief Appendix at the end of the paper collecting some errata from \cite{AndersonDelucchi}.

\subsection{Acknowledgments}
The first author would like to thank Felipe Rincon, Eric Katz, Oliver Lorscheid, and Ravi Vakil for useful conversations.
He also thanks Dustin Cartwright, Alex Fink, Felipe Rincon, and an anonymous referee for pointing out some minor mistakes in the first arXiv version of this paper. Finally, he thanks Sam Payne and Rudi Pendavingh and two anonymous referees for helpful comments, and Louis Rowen for explaining the connection to his work with Izhakian and Knebusch. 

We are also grateful to Masahiko Yoshinaga for pointing out a problem with an earlier version of Remark~\ref{rmk:inducedhyperaddition}, to Ting Su for suggesting improvements to the proof of Theorem \ref{thm:Prop5.6} and to Daniel Wei\ss auer for finding the counterexample given as Example~\ref{eg:danscex}. 
{ We are especially grateful to Laura Anderson for her detailed feedback on all the various drafts of this paper, and to Ting Su for additional corrections.}

\section{Hyperstructures} \label{sec:hyperring}

\subsection{Basic definitions}
A hypergroup (resp. hyperring, hyperfield) is an algebraic structure similar to a group (resp. ring, field) except that addition is multivalued.  
More precisely, addition is a {\bf hyperoperation} on a set $S$, i.e., a map $\boxplus$ from $S \times S$ to the collection of non-empty subsets of $S$.
All hyperoperations in this paper will be {\em commutative}, though the non-commutative case is certainly interesting as well.
(For more on hyperstructures, see for example \cite{ConnesConsani} and \cite[Appendix B]{JunValuations}.)

If $A,B$ are non-empty subsets of $S$, we define
\[
A \boxplus B := \bigcup_{a \in A, b \in B} (a \boxplus b)
\]
and we say that $\boxplus$ is {\bf associative} if $a \boxplus (b \boxplus c) = (a \boxplus b) \boxplus c$ for all $a,b,c \in S$.

Given an associative hyperoperation $\boxplus$, we define the hypersum $x_1 \boxplus \cdots \boxplus x_m$ of $x_1,\ldots,x_m$ for $m \geq 2$ recursively by the formula
\[
x_1 \boxplus \cdots \boxplus x_m := \bigcup_{x' \in x_2 \boxplus \cdots \boxplus x_m} x_1 \boxplus x'.
\]

\begin{defn} \label{def:hypergroup}
A (commutative) {\bf hypergroup} is a tuple $(G,\boxplus,0)$, where $\boxplus$ is a commutative and associative hyperoperation on $G$ such that:
\begin{itemize}
\item (H0) $0 \boxplus x = \{ x \}$ for all $x \in G$.
\item (H1) For every $x \in G$ there is a unique element of $G$ (denoted $-x$ and called the {\bf hyperinverse} of $x$) such that $0 \in x\boxplus -x$.
\item (H2) $x \in y \boxplus z$ if and only if $z \in x \boxplus (-y)$.
\end{itemize}
\end{defn}

\begin{remark}
Axiom (H2) is called {\em reversibility}, and in the literature a hypergroup is often only required to satisfy (H0) and (H1); a hypergroup satisfying (H2) is called a {\em canonical hypergroup}.  Since we will deal only with hypergroups satisfying (H2), we will drop the (old-fashioned sounding) adjective `canonical'.
\end{remark}

The proof of the following is immediate:

\begin{lemma} \label{lem:reversibility}
If $G$ is a hypergroup and $x,y,z \in G$, then $0 \in x \boxplus y \boxplus z$ if and only if $-z \in x \boxplus y$.
\end{lemma}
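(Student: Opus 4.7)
The plan is to unwind the definition of the triple hypersum using associativity and then invoke uniqueness of hyperinverses from (H1).

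First, by the recursive definition of hypersums together with associativity of $\boxplus$, I would rewrite
\[
x \boxplus y \boxplus z \;=\; \bigcup_{w \in x \boxplus y} \bigl(w \boxplus z\bigr).
\]
Consequently, $0 \in x \boxplus y \boxplus z$ holds if and only if there exists some $w \in x \boxplus y$ with $0 \in w \boxplus z$.

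Next, I would use axiom (H1): for the given element $z \in G$, there is a \emph{unique} element whose hypersum with $z$ contains $0$, namely $-z$. Therefore $0 \in w \boxplus z$ forces $w = -z$; conversely, by definition of $-z$ we have $0 \in (-z) \boxplus z$. Substituting this back into the preceding characterization, the existence of such a $w$ is equivalent to $-z \in x \boxplus y$, which is exactly the desired statement.

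I do not anticipate any serious obstacle: the argument is essentially a one-line unpacking of associativity combined with the uniqueness clause in (H1). The only point requiring a bit of care is to confirm that the recursive definition $x \boxplus y \boxplus z = \bigcup_{w' \in y \boxplus z}(x \boxplus w')$ agrees (via associativity and commutativity) with the grouping $\bigcup_{w \in x \boxplus y}(w \boxplus z)$ used above, but this is immediate from the standing hypothesis that $\boxplus$ is both commutative and associative. Note that axiom (H2) is not needed for this lemma.
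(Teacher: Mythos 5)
Your proof is correct, and it is the expected one-line unpacking that the paper alludes to by calling the statement ``immediate'' (no proof is actually given in the paper): regroup the triple hypersum via associativity and commutativity, and then apply the uniqueness clause of (H1) to conclude that the only $w$ with $0 \in w \boxplus z$ is $w = -z$.

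Your closing remark that (H2) is not needed is accurate for the route you chose, though it is worth noting that an equally short alternative \emph{does} go through (H2), which is presumably why the lemma carries the label \texttt{lem:reversibility} and follows immediately after (H2) is introduced as the reversibility axiom: from $0 \in (x \boxplus y) \boxplus z$ pick $w \in x \boxplus y$ with $0 \in z \boxplus w$; then (H2) (applied with the roles $0 \in z \boxplus w$) together with (H0) gives $w \in 0 \boxplus (-z) = \{-z\}$, hence $w = -z$. Both routes are one-liners and both are valid, so there is no gap either way; your version simply shows that, for this particular lemma, (H1)'s uniqueness already suffices.
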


\begin{defn} \label{def:hyperring}
A (commutative) {\bf hyperring} is a tuple $(R,\odot,\boxplus,1,0)$ such that:
\begin{itemize}
\item $(R,\odot,1)$ is a commutative monoid.
\item $(R,\boxplus,0)$ is a a commutative hypergroup.
\item (Absorption rule) $0 \odot x = x \odot 0 = 0$ for all $x \in R$.
\item (Distributive Law) $a \odot (x \boxplus y) = (a \odot x) \boxplus (a \odot y)$ for all $a,x,y \in R$.
\end{itemize}
\end{defn}

As usual, we will denote a hyperring by its underlying set $R$ when no confusion will arise.
Note that any commutative ring $R$ with $1$ may be considered in a trivial way as a hyperring.
We will sometimes write $xy$ (resp. $x/y$) instead of $x \odot y$ (resp. $x \odot y^{-1}$) if there is no risk of confusion.

\medskip

\begin{remark}
Our notion of hyperring is sometimes called a {\em Krasner hyperring} in the literature; it is a special case of a more general class of algebraic structures in which one allows multiplication to be multivalued as well.  
Since we will not make use of more general hyperrings in this paper, and since (following \cite{ConnesConsani}) we will use the term `Krasner hyperfield' for something different (see Example~\ref{ex:KrasnerHyperfield} below), we will not use the term `Krasner hyperring'.
\end{remark}

\begin{remark}
If we just require $(R,\boxplus,0)$ in Definition~\ref{def:hyperring} to satisfy (H0) and (H1), it follows automatically from the distributive law that it also satisfies (H2).
\end{remark}

\begin{remark} \label{rmk:inducedhyperaddition}
If $R$ is a commutative ring with $1$ and $G$ is a subgroup of the group $R^\times$ of units in $R$, then the set $R/G$ of orbits for the action of $G$ on $R$ by multiplication has a natural hyperring structure (cf.~\cite[Proposition 2.5]{ConnesConsani}), given by taking an orbit to be in the hypersum of two others if it is a subset of their setwise sum.
\end{remark}

\begin{defn}
A hyperring $F$ is called a {\bf hyperfield} if $0 \neq 1$ and every non-zero element of $F$ has a multiplicative inverse.  
\end{defn}

\subsection{Examples}

We now give some examples of hyperfields which will be important to us in the sequel.

\begin{example}
(Fields) If $F=K$ is a field, then $F$ can be trivially considered as a hyperfield by setting $a \odot b = a \cdot b$ and $a \boxplus b = \{ a+b \}$.
\end{example}

\begin{example} \label{ex:KrasnerHyperfield}
(Krasner hyperfield) Let ${\mathbb K} =  \{ 0,1 \}$ with the usual multiplication rule, but with hyperaddition defined by
$0\boxplus x=x\boxplus 0=\{x \}$ for $x=0,1$ and $1\boxplus 1 = \{ 0,1 \}$.  Then ${\mathbb K}$ is a hyperfield, called the {\bf Krasner hyperfield} by Connes and Consani in \cite{ConnesConsani}.
This is the hyperfield structure on $\{ 0, 1 \}$ induced (in the sense of Remark~\ref{rmk:inducedhyperaddition}) by the field structure on $F$, for any field $F$, with respect to the trivial valuation $v : F \to \{ 0,1 \}$ sending $0$ to $0$ and all non-zero elements to $1$.
\end{example}

\begin{example}
(Tropical hyperfield) Let ${\mathbb T}_+ := {\mathbb R} \cup \{ -\infty \}$, and for $a,b \in {\mathbb T}_+$ define $a\cdot b = a+b$ (with $-\infty$ as an absorbing element).  
The hyperaddition law is defined by setting $a \boxplus b = \{ \max (a,b) \}$ if $a \neq b$ and $a \boxplus b = \{ c \in {\mathbb T}_+ \; | \; c \leq a \}$ if $a = b$.  (Here we use the standard total order on ${\mathbb R}$ and set $-\infty \leq x$ for all $x \in {\mathbb R}$.)  Then ${\mathbb T}_+$ is a hyperfield, called the {\bf tropical hyperfield}.
The additive hyperidentity is $-\infty$ and the multiplicative identity is $0$.
Because it can be confusing that $0,1 \in {\mathbb R}$ are not the additive (resp. multiplicative) identity elements in ${\mathbb T}_+$, we will work instead with the isomorphic hyperfield ${\mathbb T} := {\mathbb R}_{\geq 0}$ in which $0,1 \in {\mathbb R}$ are the additive (resp. multiplicative) identity elements and multiplication is the
usual multiplication.  Hyperaddition is defined so that the map ${\rm exp}: {\mathbb T}_+ \to {\mathbb T}$ is an isomorphism of hyperfields.
\end{example}

\begin{example}
(Valuative hyperfields) More generally, if $\Gamma$ is any totally ordered abelian group (written multiplicatively), there is a canonical hyperfield structure on $\Gamma \cup \{ 0 \}$ defined in a similar way as for ${\mathbb T}$.
The hyperfield structure on $\Gamma \cup \{ 0 \}$ is induced from that on $F$ by $\| \cdot \|$ for any surjective norm $\| \cdot \| : F \onto \Gamma \cup \{ 0 \}$ on a field $F$.
We call a hyperfield which arises in this way a {\bf valuative hyperfield}.  In particular, both ${\mathbb K}$ and ${\mathbb T}$ are valuative hyperfields.
\end{example}

\begin{example}
(Hyperfield of signs) Let ${\mathbb S} := \{ 0, 1, -1 \}$ with the usual multiplication law, and hyperaddition defined by $1 \boxplus 1 = \{ 1 \}$, $-1 \boxplus -1 = \{ -1 \}$, $x \boxplus 0 = 0 \boxplus x = \{ x \}$, and $1 \boxplus -1 = -1 \boxplus 1 = \{ 0, 1, -1 \}$.  Then ${\mathbb S}$ is a hyperfield, called the {\bf hyperfield of signs}. 
The underlying multiplicative monoid of ${\mathbb S}$ is sometimes denoted by ${\mathbb F}_{1^2}$.  
The hyperfield structure on $\{ 0,1,-1 \}$ is induced from that on ${\mathbb R}$ by the map $\sigma : {\mathbb R} \to \{ 0, 1, -1 \}$ taking $0$ to $0$ and a nonzero real number to its sign.  
\end{example}

\begin{example}
(Phase hyperfield) Let ${\mathbb P} := S^1 \cup \{ 0 \}$, where $S^1 = \{ z \in {\mathbb C} \; | \; |z|=1 \}$ is the complex unit circle.  Multiplication is defined as usual, and the hyperaddition law is defined for $x,y \neq 0$ by setting $x \boxplus -x := \{ 0, x, -x \}$ and 
$x \boxplus y := \{ \frac{\alpha x + \beta y}{\| \alpha x + \beta y \|} \; | \; \alpha, \beta \in {\mathbb R}_{>0} \}$ otherwise.
The hyperfield structure on $S^1 \cup \{ 0 \}$ is induced from that on ${\mathbb C}$ by the map $p : {\mathbb R} \to S^1 \cup \{ 0 \}$ taking $0$ to $0$ and a nonzero complex number $z$ to its phase $z / |z| \in S^1$.
\end{example}

Many other interesting examples of hyperstructures are given in Viro's papers  \cite{ViroDequant,Viro} and the papers \cite{ConnesConsaniAbsolute,ConnesConsani} of Connes and Consani.  Here are a couple of examples taken from these papers:

\begin{example}
\label{ex:triangle}
(Triangle hyperfield) Let ${\mathbb V}$ be the set ${\mathbb R}_{\geq 0}$ of nonnegative real numbers with the usual multiplication and the hyperaddition rule
\[
a \boxplus b := \{ c \in {\mathbb R}_{\geq 0} \; : \; |a-b| \leq c \leq a+b \}.
\]
(In other words, $a \boxplus b$ is the set of all real numbers $c$ such that there exists a Euclidean triangle with side  lengths $a, b, c$.)
Then ${\mathbb V}$ is a hyperfield, closely related to the notion of {\em Litvinov-Maslov dequantization} (cf.~\cite[\S{9}]{ViroDequant}).
\end{example}

\begin{example}
(Ad{\`e}le class hyperring) If $K$ is a global field and $A_K$ is its ring of ad{\`e}les, the commutative monoid $A_K / K^*$ (which plays an important role in Connes' conjectural approach to proving the Riemann hypothesis) is naturally endowed with the structure of a hyperring by Remark~\ref{rmk:inducedhyperaddition}.  It is, moreover, an algebra over the Krasner hyperfield ${\mathbb K}$ in a natural way.
One of the interesting discoveries of Connes and Consani \cite{ConnesConsani} is that if $K$ is the function field of a curve $C$ over a finite field, 
the groupoid of prime elements of the hyperring $A_K / K^*$ is canonically isomorphic to the loop groupoid of the maximal abelian cover of $C$.
\end{example}

\begin{remark}
There are examples of hyperfields which do not arise from the construction given in Remark~\ref{rmk:inducedhyperaddition}; see \cite{Massouros}.
\end{remark}

\subsection{Modules, linear independence, spans, and orthogonality}

\begin{defn}
Let $R$ be a hyperring.
An {\bf $R$-module} is a commutative hypergroup $M$ together with a map $R \times M \to M$, denoted $(r,m) \mapsto r \odot m$, such that
\begin{itemize}
\item $0 \odot x = 0$ for all $x \in M$ and $a \odot 0 = 0$ for all $a \in R$.
\item $(a \odot b) \odot x = a \odot (b \odot x)$ for all $a,b \in R$ and $x \in M$.
\item $a\odot (x \boxplus y) = (a \odot x) \boxplus (a \odot y)$ for all $a \in R$ and $x,y \in M$,
where for $a \in R$ and $N \subseteq M$ we define $a \odot N := \{ a \odot x \; | \; x \in N \}$.
\item $(a \boxplus b) \odot x = (a \odot x) \boxplus (b \odot x)$ for all $a,b \in R$ and $x \in M$, where for $A \subset R$ and $x \in M$ we define $A \odot x := \{ a \odot x \; | \; a \in A \}$.
\end{itemize}
\end{defn}

\begin{example}
If $R$ is a hyperring and $E$ is a set, the set $R^E$ of functions from $E$ to $R$ with pointwise multiplication and hyperaddition is naturally an $R$-module.
If $E = \{ 1,\ldots,m \}$, we sometimes write $R^m$ instead of $R^E$.
\end{example}

The {\bf support} of $X \in R^E$, denoted $\underline{X}$ or ${\rm supp}(X)$, is the set of $e \in E$ such that $X(e) \neq 0$.
If $A \subseteq R^E$, we set ${\rm supp}(A) := \{ \underline{X} \; | \; X \in A \}$.

\medskip

The {\bf projective space} ${\mathbb P}(R^E)$ is defined to be the set of equivalence classes of elements of $R^E$ under the equivalence relation where $X_1 \sim X_2$ if and only if $X_1 = \alpha \odot X_2$ for some $\alpha \in R^\times$.  
Note that the support of $X \in R^E$ depends only on its equivalence class in ${\mathbb P}(R^E)$.
We let $\pi : R^E \backslash \{ 0 \} \onto {\mathbb P}(R^E)$ denote the natural projection. 

\medskip

In Definition~\ref{defn:linindep}, we will define linear dependence in $R$-modules by the condition that $0$ lies in a certain hypersum.
To orient the reader, we provide some illustrative examples of what it means for $0$ to belong to a hypersum.

\begin{example}
If $x_1,\ldots,x_k \in {\mathbb K}$, then $0 \in x_1 \boxplus \cdots \boxplus x_k$ if and only if $\{ i \; | \; x_i = 1 \}$ does not have exactly one element.
\end{example}

\begin{example}
If $x_1,\ldots,x_k \in {\mathbb T}$, then $0 \in x_1 \boxplus \cdots \boxplus x_k$ if and only if the maximum of the $x_i$ occurs (at least) twice, or $k=1$ and $x_1 = 0$.
\end{example}

\begin{example}
If $x_1,\ldots,x_k \in {\mathbb S}$, then $0 \in x_1 \boxplus \cdots \boxplus x_k$ if and only if all $x_i = 0$ or the nonzero $x_i$'s are not all equal.
\end{example}

\begin{defn} \label{defn:linindep}
(Linear independence) Let $M$ be a module over the hyperring $R$.  We say that elements $m_1,\ldots,m_k$ are {\bf linearly dependent} if there exist $c_1,\ldots,c_k \in R$, not all $0$, such that
\[
0 \in (c_1 \odot m_1) \boxplus \cdots \boxplus (c_k \odot m_k).
\]
Elements which are not linearly dependent are called {\bf linearly independent}.
\end{defn}


{ 
We can define linear spans in a similar way.

\begin{defn} \label{defn:linspan}
(Linear span) Let $M$ be a module over the hyperring $R$.  The {\bf linear span} of $m_1,\ldots,m_k \in M$ is defined to be the set
of all $m \in M$ such that 
\[
m \in (c_1 \odot m_1) \boxplus \cdots \boxplus (c_k \odot m_k).
\]
for some $c_1,\ldots,c_k \in R$.
\end{defn}
}

The following definitions will play an important role in the theory of duality which we develop later in this paper.

{
\begin{defn}
(Involution) Let $R$ be a hyperring.  An {\bf involution} of $R$ is a map $\tau : R \to R$, which {preserves the addition, }sends $0$ to $0$ and is a monoid homomorphism from $(R,\odot,1)$ to itself, such that $\tau^2$ is the identity map.
\end{defn}

\begin{defn}
(Orthogonality) Let $R$ be a hyperring endowed with an involution $x \mapsto \overline{x}$, 
let $E = \{ 1,\ldots, m \}$, and let $M = R^E$, considered as an $R$-module.
The {\bf inner product} of $X=(x_1,\ldots,x_m)$ and $Y=(y_1,\ldots,y_m)$ is defined to be the set $X \odot Y := (x_1 \odot \involution{y}_1) \boxplus \cdots \boxplus (x_m \odot \involution{y}_m)$.  
We say that $X,Y$ are {\bf orthogonal}, denoted $X \perp Y$, if $0 \in X \odot Y$.  If $S \subseteq M$, we denote by $S^\perp$ the set of all $X \in M$ such that $X \perp Y$ for all $Y \in S$.
\end{defn}

When $R$ is the field $\CC$ of complex numbers or the phase hyperfield ${\mathbb P}$, one should take the involution on $R$ to be complex conjugation. For $R \in \{ {\mathbb K}, {\mathbb T}, {\mathbb S} \}$, one should take the involution on $R$ to be the identity map.
More generally, in examples where we do not specify what the involution $x \mapsto \involution{x}$ is, the reader should take it to be the identity map.
}

\medskip

Note for later reference that for $X,Y \neq 0$, the condition $X \perp Y$ only depends on the equivalence classes of $X,Y$ in ${\mathbb P}(R^E)$.

\section{Matroids over hyperfields}
\label{sec:MatroidsOverHyperfields}

Let $E$ be a finite set.
In this section, we will define what it means to be a strong (resp. weak) {\bf matroid on $E$ with coefficients in a hyperfield $F$}, or (for brevity) a strong (resp. weak) {\bf matroid over $F$} or {\bf $F$-matroid}.
Our definition will be such that:

\begin{itemize}
\item When $F=K$ is a field, a strong or weak matroid on $E$ with coefficients in $K$ is the same thing as a vector subspace of $K^E$ in the usual sense.
\item A strong or weak matroid over ${\mathbb K}$ is the same thing as a {\bf matroid}.
\item A strong or weak matroid over ${\mathbb T}$ is the same thing as a {\bf valuated matroid} in the sense of Dress--Wenzel \cite{DressWenzelVM}.
\item A strong or weak matroid over ${\mathbb S}$ is the same thing as an {\bf oriented matroid} in the sense of Bland--Las Vergnas \cite{BlandLasVergnas}.
\end{itemize}

See \S\ref{sec:whythesame} for further details on the compatibility of our notion of $F$-matroid with various existing definitions in these particular examples.

\subsection{Modular pairs}

As in the investigation of phased matroids by Anderson--Delucchi, a key ingredient for obtaining a robust notion of matroid in the general setting of hyperfields is the concept of {\em modular pairs}. 

\medskip

\begin{defn}
Let $E$ be a set and let ${\mathcal C}$ be a collection of pairwise incomparable nonempty subsets of $E$.  We say that $C_1,C_2 \in {\mathcal C}$ form a {\bf modular pair} in ${\mathcal C}$ if $C_1 \neq C_2$ and $C_1 \cup C_2$ does not properly contain a union of two distinct elements of ${\mathcal C}$.
\end{defn}

{It is useful to reinterpret this definition in the language of {\em lattices}.  We recall the relevant definitions for the reader's benefit.

\medskip

Let $(S,\leq)$ be a partially ordered set (poset).  A {\bf chain} in $S$ is a totally ordered subset $J$; the {\bf length} of a chain is $\ell(J) := |J| - 1$.
The {\bf length} of $S$ is the supremum of $\ell(J)$ over all chains $J$ of $S$.
The {\bf height} of an element $X$ of $S$ is the largest $n$ such that there is a chain $X_0 < X_1 < \ldots < X_n$ in $S$ with $X_n = X$.

\medskip

Given $x \in S$ we write $S_{\leq x} = \{ y \in S \; | \; y \leq x \}$ and $S_{\geq x} = \{ y \in S \; | \; y \geq x \}$.  These are sub-posets of $S$.
Let $x,y \in S$.  If the poset $S_{\geq x} \cap S_{\geq y}$ has a unique minimal element, this element is denoted $x \vee y$ and called the {\bf join} of $x$ and $y$.
If the poset $S_{\leq x} \cap S_{\leq y}$ has a unique maximal element, this element is denoted $x \wedge y$ and called the {\bf meet} of $x$ and $y$.
The poset $S$ is called a {\bf lattice} if the meet and join are defined for any $x,y \in S$.

\medskip

Every finite lattice $L$ has a unique minimal element $0$ and a unique maximal element $1$.
An element $x \in L$ is called an {\bf atom} if $x \neq 0$ and there is no $z \in L$ with $0 < z < x$.
Two atoms $x,y \in L$ form a {\bf modular pair} if the height of $x \vee y$ is 2, i.e., $x \neq y$ and there do not exist $z,z' \in L$ with $0 < z < z' < x \vee y$.

\medskip

If ${\mathcal S}$ is any family of subsets of a set $E$, the set $U({\mathcal S}) := \{ \bigcup T \; | \; T \subseteq {\mathcal S} \}$ forms a lattice when equipped with the partial order coming from inclusion of sets, with join corresponding to union and with the meet of $x$ and $y$ defined to be the union of all sets in ${\mathcal S}$ contained in both
$x$ and $y$.
If the elements of ${\mathcal S}$ are incomparable, then every $x \in {\mathcal S}$ is atomic as an element of $U({\mathcal S})$.
We say that two elements $x,y \in {\mathcal S}$ are a {\bf modular pair} in ${\mathcal S}$ if they are a modular pair in the lattice $U({\mathcal S})$.
}

\medskip

Our interest in modular pairs comes in part from the observation of Anderson and Delucchi that there is a nice axiomatization of {\em phased matroids} in terms of modular pairs of {\em phased circuits}, but general pairs of phased circuits do not obey circuit elimination. 
The following facts about modular pairs will come in quite handy:

\begin{lemma} [cf.~\cite{Delucchi}] \label{lem:Delucchi}
Let ${\mathcal C}$ be a collection of non-empty incomparable subsets of a finite set $E$.  Then the following are equivalent:
\begin{enumerate}
\item ${\mathcal C}$ is the set of circuits of a matroid $M$ on $E$.
\item Every pair $C_1,C_2$ of distinct elements of ${\mathcal C}$ satisfies {\bf circuit elimination}: if $e \in C_1 \cap C_2$ then there exists $C_3 \in {\mathcal C}$ such that $C_3 \subseteq (C_1 \cup C_2) \backslash e$.
\item Every modular pair in ${\mathcal C}$ satisfies circuit elimination.
\end{enumerate}
\end{lemma}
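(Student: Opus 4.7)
The plan is to route everything through (2). The equivalence (1) $\iff$ (2) is Whitney's classical characterization of the set of circuits of a matroid via (weak) circuit elimination, and (2) $\Rightarrow$ (3) is immediate since modular pairs are, by definition, a particular class of pairs of distinct elements of ${\mathcal C}$. The only substantive content of the lemma is therefore (3) $\Rightarrow$ (2), which I would prove by strong induction on $|C_1 \cup C_2|$.

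For the inductive step, fix distinct $C_1, C_2 \in {\mathcal C}$ with $e \in C_1 \cap C_2$. If $(C_1, C_2)$ is a modular pair in ${\mathcal C}$, hypothesis (3) immediately supplies $C_3 \in {\mathcal C}$ with $C_3 \subseteq (C_1 \cup C_2) \setminus e$. Otherwise, unrolling the definition of modular pair in the lattice $U({\mathcal C})$ (in which joins are unions of sets), there exist distinct $D_1, D_2 \in {\mathcal C}$ with $D_1 \cup D_2 \subsetneq C_1 \cup C_2$. A three-way case analysis based on where $e$ sits relative to $\{ D_1, D_2 \}$ now closes the argument: if $e \notin D_i$ for some $i \in \{ 1, 2 \}$, then $C_3 := D_i$ itself lies in $(C_1 \cup C_2) \setminus e$; and if instead $e \in D_1 \cap D_2$, the inductive hypothesis applied to $(D_1, D_2, e)$ (which is legitimate because $|D_1 \cup D_2| < |C_1 \cup C_2|$) produces $C_3 \in {\mathcal C}$ with $C_3 \subseteq (D_1 \cup D_2) \setminus e \subseteq (C_1 \cup C_2) \setminus e$. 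The base case of the induction is absorbed into this: for $(C_1, C_2)$ minimizing $|C_1 \cup C_2|$, no witness pair $(D_1, D_2)$ with strictly smaller union can exist, so $(C_1, C_2)$ is automatically modular.

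I do not anticipate any serious obstacle. The proof is pure combinatorial bookkeeping, and the slickness comes from the observation that when non-modularity hands us a witness pair $(D_1, D_2)$ avoiding $e$ on at least one side, that witness itself serves as $C_3$, whereas otherwise we can induct on the strictly smaller union $D_1 \cup D_2$. The only point warranting care is verifying that non-modularity in the lattice-theoretic sense spelled out in the paper really does force the existence of such $D_1, D_2$; but this is transparent from the fact that the atoms of $U({\mathcal C})$ are precisely the elements of ${\mathcal C}$ and joins are unions.
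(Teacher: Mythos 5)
Your proof is correct. The paper itself offers no proof of this lemma---it merely cites \cite{Delucchi}---so there is nothing in the paper to compare against; your argument is a valid self-contained substitute. The key idea, inducting on $|C_1 \cup C_2|$ and using non-modularity of a distinct pair to produce $D_1, D_2 \in {\mathcal C}$ with $D_1 \cup D_2 \subsetneq C_1 \cup C_2$, then either serving up a $D_i$ that already avoids $e$ or recursing on the strictly smaller instance when $e \in D_1 \cap D_2$, is clean and works directly from the paper's set-theoretic definition of modular pair. One small slip in your final paragraph: a pair $(C_1, C_2)$ that minimizes $|C_1 \cup C_2|$ among pairs sharing $e$ is not ``automatically modular''---a witness pair $(D_1, D_2)$ of non-modularity might simply fail to contain $e$, in which case minimality is not violated. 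But this does not affect the validity of your argument: your two explicit subcases (some $D_i$ avoids $e$; both $D_i$ contain $e$) are exhaustive, the recursion descends to a strictly smaller union, and strong induction is therefore well-founded without any separate base-case treatment.
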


The following lemma, which can be pieced together from \cite[Lemma 2.7.1]{WhiteCG} and \cite[Lemma 4.3]{MurotaTamura} (and also makes a nice exercise), might help the reader get a better feeling for the concept of modular pairs in the context of matroid theory:

\begin{lemma} \label{lem:white}
Let $M$ be a matroid with rank function $r$, and let $C_1, C_2$ be distinct circuits of $M$.  Then the following are equivalent:
\begin{enumerate}
\item $C_1, C_2$ are a modular pair of circuits.
\item $r(C_1 \cup C_2) + r(C_1 \cap C_2) = r(C_1) + r(C_2)$.  
\item $r(C_1 \cup C_2) = |C_1 \cup C_2| - 2$.
\item For each $e \in C_1 \cap C_2$, there is a unique circuit $C_3$ with $C_3 \subseteq (C_1 \cup C_2) \backslash e$, and this circuit has the property that $C_3$ contains the symmetric difference $C_1 \Delta C_2$.
\item { There are a basis $B$ for $M$ and a pair $e_1,e_2$ of distinct elements of $E \backslash B$ such that $C_1 = C(B,e_1)$ and $C_2 = C(B,e_2)$, where $C(B,e)$ denotes the fundamental circuit with respect to $B$ and $e$.}
\end{enumerate}
\end{lemma}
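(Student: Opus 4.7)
The plan is to establish (2) $\Leftrightarrow$ (3), then (3) $\Leftrightarrow$ (5), then (5) $\Rightarrow$ (4) $\Rightarrow$ (3), and finally (1) $\Leftrightarrow$ (3). The starting observation is that $C_1 \cap C_2$, being a proper subset of the minimal dependent set $C_1$, is independent, so $r(C_1 \cap C_2) = |C_1 \cap C_2|$. Combined with $r(C_i) = |C_i|-1$ and submodularity, this gives $r(C_1 \cup C_2) \leq |C_1 \cup C_2| - 2$, with equality exactly when (2) holds; since $|C_1 \cup C_2| - 2 = r(C_1)+r(C_2)-r(C_1 \cap C_2)$, this yields (2) $\Leftrightarrow$ (3) immediately.

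For (3) $\Leftrightarrow$ (5) I build a basis from the inside out: extend $C_1 \cap C_2$ to a maximal independent subset of $C_1$, then to a basis $B_0$ of $M|_{C_1 \cup C_2}$. By (3), $|(C_1 \cup C_2) \setminus B_0| = 2$, and a short count forces these two elements to be some $e_1 \in C_1 \setminus C_2$ and some $e_2 \in C_2 \setminus C_1$. Since $B_0$ already spans $C_1 \cup C_2$, any extension of $B_0$ to a basis $B$ of $M$ uses only elements outside $C_1 \cup C_2$, so $C_i = C(B, e_i)$ by uniqueness of the fundamental circuit. The converse is a direct rank count from the identity $B \cap (C_1 \cup C_2) = (C_1 \setminus \{e_1\}) \cup (C_2 \setminus \{e_2\})$.

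For (5) $\Rightarrow$ (4), fix $e \in C_1 \cap C_2$: then $e \in \mathrm{cl}(C_1 \setminus \{e\}) \subseteq \mathrm{cl}((C_1 \cup C_2) \setminus \{e\})$, so deleting $e$ preserves the rank and $(C_1 \cup C_2) \setminus \{e\}$ has nullity $1$, hence contains a unique circuit $C_3$. Strong circuit elimination applied to $C_1, C_2$ at $e$ while preserving any chosen $f \in C_1 \Delta C_2$ produces a circuit through $f$ inside $(C_1 \cup C_2) \setminus \{e\}$; by uniqueness this circuit equals $C_3$, so $C_3 \supseteq C_1 \Delta C_2$. Conversely, (4) forces $(C_1 \cup C_2) \setminus \{e\}$ to have nullity exactly $1$, and its rank agrees with that of $C_1 \cup C_2$ by the same closure argument, yielding (3).

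For (1), the implication (1) $\Rightarrow$ (3) is contrapositive: if $C_1 \cup C_2$ has nullity $\geq 3$, a basis of $M|_{C_1 \cup C_2}$ leaves at least three elements outside, and the union of any two of the resulting fundamental circuits misses the third, violating modularity of $C_1, C_2$. For (3) $\Rightarrow$ (1), suppose $C_3 \neq C_4$ are circuits with $C_3 \cup C_4 \subsetneq S := C_1 \cup C_2$, and pick $x \in S \setminus (C_3 \cup C_4)$. If $x \in C_1 \cap C_2$, the uniqueness clause of (4) is contradicted by $C_3, C_4 \subseteq S \setminus \{x\}$; if $x \in C_1 \Delta C_2$, then any $e \in (C_1 \cap C_2) \setminus C_i$ would force $C_i$ to coincide with the unique circuit in $S \setminus \{e\}$, which contains $x$, a contradiction, so $C_1 \cap C_2 \subseteq C_3 \cap C_4$ (the case $C_1 \cap C_2 = \emptyset$ is handled separately by noting that (3) then forces $M|_S = M|_{C_1} \oplus M|_{C_2}$, whose only circuits are $C_1, C_2$). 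Contracting the independent set $C_1 \cap C_2$ produces a matroid on $C_1 \Delta C_2$ of nullity $2$ in which the disjoint circuits $C_1 \setminus C_2$ and $C_2 \setminus C_1$ exhaust the rank; the quotient therefore splits as a direct sum, and its only circuits are these two. Lifting $C_3, C_4$ through the contraction gives $\{C_3, C_4\} = \{C_1, C_2\}$ and so $C_3 \cup C_4 = S$, contradicting $C_3 \cup C_4 \subsetneq S$. The main obstacle is this last direction, where the decisive step is the reduction via the uniqueness provided by (4) followed by contraction to a direct-sum quotient.
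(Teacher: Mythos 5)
The paper offers no proof of this lemma; it merely cites \cite[Lemma 2.7.1]{WhiteCG} and \cite[Lemma 4.3]{MurotaTamura} and calls it a ``nice exercise,'' so there is no house argument to compare against. Your self-contained proof is essentially sound. The independence of $C_1 \cap C_2$ together with $r(C_i) = |C_i|-1$ and submodularity correctly gives $(2) \Leftrightarrow (3)$ and the general bound $r(C_1 \cup C_2) \leq |C_1 \cup C_2| - 2$. The basis construction for $(3) \Leftrightarrow (5)$ (extending $C_1 \cap C_2$ through a maximal independent subset of $C_1$ to a basis of the restriction) correctly places the two missing elements on opposite sides of the symmetric difference, and the converse rank count from $B \cap (C_1 \cup C_2) = (C_1 \cup C_2) \setminus \{e_1, e_2\}$ is right. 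The step $(5) \Rightarrow (4)$ via strong circuit elimination and the nullity-one uniqueness of the residual circuit is correct, as is the contrapositive of $(1) \Rightarrow (3)$, where the union of two of three fundamental circuits inside $C_1 \cup C_2$ avoids the third. Your $(3) \Rightarrow (1)$ reduction, showing $C_1 \cap C_2 \subseteq C_3 \cap C_4$ and then contracting to a nullity-two direct sum whose only circuits are $C_1 \setminus C_2$ and $C_2 \setminus C_1$, is also correct.

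There is one genuine gap, in $(4) \Rightarrow (3)$: your argument fixes an element $e \in C_1 \cap C_2$, so it implicitly requires $C_1 \cap C_2 \neq \emptyset$. When $C_1 \cap C_2 = \emptyset$, condition $(4)$ is vacuously true while $(3)$ can fail. For instance, in the cycle matroid of the graph on $\{v_1,\ldots,v_5\}$ with $C_1$ the triangle on $v_1,v_2,v_3$ and $C_2$ the $4$-cycle $v_1 v_4 v_2 v_5 v_1$, the two circuits are edge-disjoint, $|C_1 \cup C_2| = 7$, and $r(C_1 \cup C_2) = 4 \neq 5 = |C_1 \cup C_2| - 2$, so $(1),(2),(3),(5)$ all fail while $(4)$ holds. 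This is really a defect of the lemma as literally stated (a vacuous condition cannot be equivalent to the others), and the paper's applications of the lemma all presuppose a common element of $\underline X \cap \underline Y$. You were careful to treat the disjoint case separately in $(3) \Rightarrow (1)$; you should flag the corresponding issue in $(4) \Rightarrow (3)$, either by noting that the equivalence with $(4)$ is to be read under the hypothesis $C_1 \cap C_2 \neq \emptyset$, or by folding that hypothesis into condition $(4)$.
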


In particular, if $M$ is the cycle matroid of a connected graph $G$ then $C_1,C_2$ are a modular pair if and only if they are fundamental cycles associated to the same spanning tree $T$.

Note that for general circuits $C_1$ and $C_2$ in a matroid $M$, the {\bf submodular inequality} asserts that $r(C_1 \cup C_2) + r(C_1 \cap C_2) \leq r(C_1) + r(C_2)$.  Condition (2) of the lemma says that $C_1$ and $C_2$ form a modular pair if and only if {\em equality} holds in this inequality (hence the name ``modular pair'').

\subsection{Weak circuit axioms}

The following definition presents the first of several equivalent axiomatizations of weak matroids over hyperfields.
\begin{defn} \label{def:Fcircuits}
Let $E$ be a non-empty finite set and let $F$ be a hyperfield.
A subset ${\mathcal C}$ of $F^E$ is called the {\bf $F$-circuit set of a weak $F$-matroid $M$ on $E$} if ${\mathcal C}$ satisfies the following axioms:
\begin{itemize}
\item (C0) $0 \not\in {\mathcal C}$.
\item (C1) If $X \in {\mathcal C}$ and $\alpha \in F^\times$, then $\alpha \odot X \in {\mathcal C}$.
\item (C2) [Incomparability] If $X,Y \in {\mathcal C}$ and $\underline{X} \subseteq \underline{Y}$, then there exists $\alpha \in F^\times$ such that $X = \alpha \odot Y$.
\item ${\rm (C3)}'$ [Modular Elimination] If $X,Y \in {\mathcal C}$ are a {\bf modular pair of $F$-circuits} (meaning that $\underline{X},\underline{Y}$ are a modular pair in ${\rm supp}({\mathcal C})$) and $e \in E$ is such that $X(e)=-Y(e) \neq 0$, 
there exists an $F$-circuit $Z \in {\mathcal C}$ such that $Z(e)=0$ and $Z(f) \in X(f) \boxplus Y(f)$ for all $f \in E$.
\end{itemize} 
\end{defn}

This is equivalent to the axiom system for phased circuits given in \cite{AndersonDelucchi} in the case of phased matroids (i.e., when $F = {\mathbb P}$).  Also, the $F$-circuit $Z$ in (C3)$'$ is {\em unique}.  (Both of these observations follow easily from Lemma~\ref{lem:white}.)

\medskip

If ${\mathcal C}$ is the set of $F$-circuits of a weak $F$-matroid $M$ with ground set $E$, there is an underlying matroid (in the usual sense) $\underline{M}$ on $E$ whose circuits are the supports of the $F$-circuits of $M$.  (It is straightforward, in view of Lemma~\ref{lem:Delucchi}, to check that the circuit axioms for a matroid are indeed satisfied.)

\begin{defn} \label{def:rank}
The {\bf rank} of $M$ is defined to be the rank of the underlying matroid $\underline{M}$.
\end{defn}

\medskip

A {\bf projective $F$-circuit} of $M$ is an equivalence class of $F$-circuits of $M$ under the equivalence relation $X_1 \sim X_2$ if and only if $X_1 = \alpha \odot X_2$ for some $\alpha \in F^\times$.  
Axioms (C0)-(C2) together imply that the map from projective $F$-circuits of $M$ to circuits of $\underline{M}$ which sends a projective circuit $C$ to its support is a {\em bijection}.
In particular, $M$ has only finitely many projective $F$-circuits, and one can think of a weak matroid over $F$ as a matroid $\underline{M}$ together with a function associating to each circuit $\underline{C}$ of $\underline{M}$ an element $X(\underline{C}) \in {\mathbb P}(F^E)$ such that modular elimination holds for
$\cC := \pi^{-1}(\{ X(\underline{C}) \})$.

\begin{remark}
For a version of ${\rm (C3)}'$ which holds even when $X,Y$ are not assumed to be a modular pair, see Lemma~\ref{lem:Lemma5.4}.  This weaker elimination property is not strong enough, however, to characterize weak $F$-matroids except in very special cases such as $F={\mathbb K}$.
\end{remark}

\subsection{Strong circuit axioms}



We say say a family of atomic elements of a lattice is {\bf modular} if the height of their join in the lattice is the same as the size of the family. If $\Ccal$ is a subset of $F^E$ then a {\bf modular family} of elements of $\Ccal$ is one such that the supports give a modular family of elements in the lattice of unions of supports of elements of $\Ccal$.

The following definition presents the first of several equivalent axiomatizations of strong matroids over hyperfields.
\begin{defn} \label{def:Fcircuitsprime}
A subset ${\mathcal C}$ of $F^E$ is called the {\bf $F$-circuit set of a strong $F$-matroid $M$ on $E$} if ${\mathcal C}$ satisfies (C0),(C1),(C2), and the following stronger version of the modular elimination axiom ${\rm (C3)}'$:

{
\begin{itemize}
{ \item (C3) [Strong modular elimination] Suppose $X_1,\ldots,X_k$ and $X$ are $F$-circuits of $M$ which together form a modular family of size $k+1$ such that $\underline X \not \subseteq \bigcup_{1 \leq i \leq k} \underline X_i$, and for $1 \leq i \leq k$ let $$e_i \in (X \cap X_i) \setminus \bigcup_{\substack{1 \leq j \leq k \\ j \neq i}} X_j$$ be such that $X(e_i) = -X_i(e_i) \neq 0$. Then there is an $F$-circuit $Z$ such that $Z(e_i) = 0$ for $1 \leq i \leq k$ and $Z(f) \in X(f) \boxplus X_1(f) \boxplus \cdots \boxplus X_k(f)$
for every $f \in E$.}
\end{itemize}
}
\end{defn}

Any strong $F$-matroid on $E$ is in particular a weak $F$-matroid on $E$ {(take $k=1$ in the above definition)}, 
and we define the rank of such an $F$-matroid accordingly.

\medskip

{ 
Condition (C3) in Definition~\ref{def:Fcircuitsprime} may look unnatural and/or unmotivated at first glance.  However, the next result shows that (C3) is equivalent to a more natural-looking condition ${\rm (C3)}''$:


\begin{theorem} \label{thm:C3primeprime}
Let ${\mathcal C}$ be a subset of $F^E$ satisfying {\rm (C0),(C1)}, and {\rm (C2)}.  Then ${\mathcal C}$ satisfies {\rm (C3)} if and only if it satisfies 
{
\begin{itemize}
{ \item ${\rm (C3)}''$
The support of ${\mathcal C}$ is the set of circuits of a matroid $\underline{M}$, and for every $X \in {\mathcal C}$ and every basis $B$ of $\underline{M}$, $X$ is in the linear span of the vectors $X_{B,e}$ for $e \in E \setminus B$, where 
$X_{B,e}$ denotes the unique element of ${\mathcal C}$ with $X_{B,e}(e)=1$ whose support is the fundamental circuit of $e$ with respect to $B$.
}
\end{itemize}
}
\end{theorem}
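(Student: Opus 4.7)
The plan is to reduce ${\rm (C3)}$ to a computation in fundamental circuits by passing to a suitable basis. The key combinatorial input (a multi-element extension of Lemma \ref{lem:white}(5)) is that a modular family of $k+1$ circuits of $\underline M$ is exactly a family of fundamental circuits $C(B,g_0),\ldots,C(B,g_k)$ for a common basis $B$ and distinct $g_0,\ldots,g_k\in E\setminus B$; equivalently, the union of the supports has matroid corank equal to the family size. This is proved by taking any basis of the union in $\underline M$ and extending it to a basis of $\underline M$.

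For ${\rm (C3)}''\Rightarrow{\rm (C3)}$, given the data of ${\rm (C3)}$, choose such a basis $B$ with $\underline X=C(B,g_0)$ and $\underline{X_i}=C(B,g_i)$. Since each such circuit has a unique element in $E\setminus B$, applying ${\rm (C3)}''$ forces $X=X(g_0)\odot X_{B,g_0}$ and $X_i=X_i(g_i)\odot X_{B,g_i}$. Using that $e_i\in C(B,g_0)\cap C(B,g_i)\cap B$ but $e_i\notin C(B,g_j)$ for $j\neq 0,i$, an iterated single-element basis exchange shows $B':=(B\setminus\{e_1,\ldots,e_k\})\cup\{g_1,\ldots,g_k\}$ is a basis (at each step the fundamental circuit of $g_i$ is unchanged, since the removed $e_j$ do not lie in it). Let $Z$ be the $F$-circuit (unique up to scalar, by (C2)) with support $C(B',g_0)$, scaled so that $Z(g_0)=X(g_0)$; automatically $Z(e_i)=0$. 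Applying ${\rm (C3)}''$ to $Z$ with basis $B$ expresses $Z$ in the linear span of those $X_{B,g_l}$ with $g_l\in\underline Z$, and the constraints $Z(e_i)=0$ together with $X(e_i)=-X_i(e_i)$ force $Z(g_l)=X_l(g_l)$ for all $l\geq 1$. Substituting yields $Z(f)\in X(f)\boxplus\bigboxplus_i X_i(f)$ for every $f$, which is the $Z$ demanded by ${\rm (C3)}$.

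For ${\rm (C3)}\Rightarrow{\rm (C3)}''$, specializing $k=1$ in ${\rm (C3)}$ yields ${\rm (C3)}'$, so the weak $F$-matroid axioms give a matroid $\underline M$ whose circuits are the supports of $\mathcal C$. Fix $X\in\mathcal C$ and a basis $B$, and set $\{f_1,\ldots,f_m\}=\underline X\cap(E\setminus B)$. The case $m=1$ is immediate from (C2), so assume $m\geq 2$ and let $X_i:=-X(f_i)\odot X_{B,f_i}$ for $i=1,\ldots,m-1$. The supports $\underline X,C(B,f_1),\ldots,C(B,f_{m-1})$ all lie in $B\cup\{f_1,\ldots,f_m\}$ and meet $E\setminus B$ in $\{f_1,\ldots,f_m\}$; a direct rank computation gives corank $m$, so they form a modular family of size $m$. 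Taking $e_i:=f_i$ and noting $f_m\in\underline X\setminus\bigcup_{i<m}\underline{X_i}$, all hypotheses of ${\rm (C3)}$ are met, producing $Z\in\mathcal C$ with $Z(f_i)=0$ for $i<m$ and $Z(f)\in X(f)\boxplus\bigboxplus_i X_i(f)$ for every $f$. Since $X_i(f_m)=0$ we have $Z(f_m)=X(f_m)$, and the support analysis shows $\underline Z\cap(E\setminus B)=\{f_m\}$, so by (C2) $Z=X(f_m)\odot X_{B,f_m}$. Iterating Lemma \ref{lem:reversibility} then gives $X\in Z\boxplus\bigboxplus_{i=1}^{m-1}(-X_i)=\bigboxplus_{i=1}^m X(f_i)\odot X_{B,f_i}$, the desired linear span property.

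The main obstacle is establishing the matroid-theoretic scaffolding: the equivalence of modularity (in the lattice of unions of supports) with corank-equals-size for families of circuits, and verifying that $B'$ is a basis via iterated single basis exchanges using the non-membership conditions $e_i\notin C(B,g_j)$ for $j\neq i$. Once these combinatorial facts are in hand, the hypergroup manipulations proceed as above.
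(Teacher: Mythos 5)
Your proof is correct, and both directions are sound. The direction $(\mathrm{C3}) \Rightarrow (\mathrm{C3})''$ is essentially the same argument as the paper's (both apply $(\mathrm{C3})$ to a family built from scalar multiples of fundamental-circuit elements $X_{B,e_i}$ together with $\pm X$, conclude that the resulting $Z$ is forced to be the remaining fundamental-circuit element, and then reverse). The direction $(\mathrm{C3})'' \Rightarrow (\mathrm{C3})$ genuinely differs in the choice of basis: the paper chooses $B = I \cup J$ with $I = A \setminus \{z, e_1,\ldots,e_k\}$, so that each $\underline{X_i}$ is a fundamental circuit for $B$ but $\underline{X}$ is \emph{not} (it has all of $z, e_1,\ldots,e_k$ outside $B$); then $(\mathrm{C3})''$ applied directly to $X$ with respect to $B$ produces the conclusion after one reversibility step. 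You instead take a basis $B$ for which \emph{every} member of the modular family is a fundamental circuit, construct $Z$ as the fundamental-circuit element of $g_0$ for a second basis $B'$ obtained by an iterated exchange $e_i \leftrightarrow g_i$, and then apply $(\mathrm{C3})''$ to $Z$ with respect to $B$. Both routes work; the paper's choice of $B$ is more economical since it avoids the basis-exchange argument and the auxiliary application of $(\mathrm{C3})''$ to $Z$. One small imprecision in your writeup of the combinatorial lemma: it is not true that ``any basis of the union'' does the job. One must pick $g_i \in C_i \setminus \bigcup_{j \neq i} C_j$ (these are nonempty precisely because the family is modular) and take the basis of $A$ to be $A \setminus \{g_0,\ldots,g_k\}$, checking it spans via the inclusions $C_i \setminus g_i \subseteq A \setminus \{g_0,\ldots,g_k\}$; a generic basis of $A$ need not yield the given circuits as fundamental circuits. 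The lemma itself is correct and your main argument does use the right choice, so this is only a gap in the stated proof sketch of the scaffolding, not in the theorem's proof.
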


\begin{remark} Condition ${\rm (C3)}''$ is equivalent to the statement that 
the support of ${\mathcal C}$ is the set of circuits of a matroid $\underline{M}$, and for every $X \in {\mathcal C}$ and every basis $B$ of $\underline{M}$ we have
\begin{equation} \label{eq:lincomb}
X(f) \in \bigboxplus_{e \in E \setminus B} X(e)X_{B,e}(f)
\end{equation}
for all $f \in E$.
\end{remark}

Despite its naturality, condition ${\rm (C3)}''$ has the disadvantage that we need to know {\it a priori} that the support of ${\mathcal C}$ is the set of circuits of a matroid.  Another reason to prefer (C3) over ${\rm (C3)}''$ is that the former is a more direct generalization of the weak modular elimination axiom ${\rm (C3)}'$.

\medskip

We provide a proof of Theorem~\ref{thm:C3primeprime} in \S\ref{sec:C3primeprime}.
}

\subsection{Grassmann-Pl{\"u}cker functions}
\label{sec:GPsection}

We now describe a cryptomorphic characterization of weak and strong matroids over a hyperfield $F$ in terms of {\bf Grassmann-Pl{\"u}cker functions} (called ``chirotopes'' in the theory of oriented matroids and ``phirotopes'' in \cite{AndersonDelucchi}).  In addition to being interesting in its own right, this description will be crucial for establishing a duality theory
for matroids over $F$.

\medskip

\begin{defn}
Let $E$ be a non-empty finite set, let $F$ be a hyperfield, and let $r$ be a positive integer.  A { (strong)} {\bf Grassmann-Pl{\"u}cker function of rank $r$ on $E$ with coefficients in $F$} is a function $\varphi : E^r \to F$ such that:
\begin{itemize}
\item (GP1) $\varphi$ is not identically zero.
\item (GP2) $\varphi$ is alternating, i.e., $\varphi(x_1,\ldots,x_i, \ldots, x_j, \ldots, x_r)=-\varphi(x_1,\ldots,x_j, \ldots, x_i, \ldots, x_r)$ and $\varphi(x_1,\ldots, x_r) = 0$ if $x_i = x_j$ for some $i \neq j$.
\item (GP3) [Grassmann--Pl{\"u}cker relations] For any two subsets $\{ x_1,\ldots,x_{r+1} \}$ and $\{ y_1,\ldots,y_{r-1} \}$ of $E$,
\begin{equation}
\label{eq:GP3}
0 \in \bigboxplus_{k=1}^{r+1} (-1)^k \varphi(x_1,x_2,\ldots,\hat{x}_k,\ldots,x_{r+1}) \odot \varphi(x_k,y_1,\ldots,y_{r-1}).
\end{equation}
\end{itemize}
\end{defn}

For example, if $F=K$ is a field and $A$ is an $r \times m$ matrix of rank $r$ with columns indexed by $E$, it is a classical fact 
that the function $\varphi_A$ taking an $r$-element subset of $E$ to the determinant of the corresponding $r \times r$ minor of $A$ is a Grassmann-Pl{\"u}cker function.
The function $\varphi_A$ depends (up to a non-zero scalar multiple) only on the row space of $A$, and conversely the row space of $A$ is uniquely determined by the function $\varphi_A$ (this is equivalent to the well-known fact that the {\em Pl{\"u}cker relations}
cut out the Grassmannian $G(r,m)$ as a projective algebraic set).
\medskip

We say that two Grassmann-Pl{\"u}cker functions $\varphi_1$ and $\varphi_2$ are {\bf equivalent} if $\varphi_1 = \alpha \odot \varphi_2$ for some $\alpha \in F^\times$.

\begin{theorem} \label{thm:A}
Let $E$ be a non-empty finite set, let $F$ be a hyperfield, and let $r$ be a positive integer. 
There is a natural bijection between equivalence classes of Grassmann-Pl{\"u}cker functions of rank $r$ on $E$ with coefficients in $F$ and strong $F$-matroids of rank $r$ on $E$, defined via axioms {\rm (C0)} through {\rm (C3)}.
\end{theorem}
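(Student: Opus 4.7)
The plan is to prove Theorem~\ref{thm:A} by constructing maps in both directions between equivalence classes of Grassmann-Pl\"ucker functions and strong $F$-matroids (presented via $F$-circuits), and then verifying the two constructions are mutually inverse. Throughout, it is convenient to work with the equivalent characterization $(\mathrm{C3})''$ in place of $(\mathrm{C3})$ (invoking Theorem~\ref{thm:C3primeprime}), since $(\mathrm{C3})''$ is the axiom most directly encoded by the Pl\"ucker relations.

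From Grassmann-Pl\"ucker to circuits: given $\varphi : E^r \to F$ satisfying (GP1)-(GP3), I first let $\underline{M}_\varphi$ be the matroid whose bases are the $r$-subsets $\{b_1,\ldots,b_r\}$ with $\varphi(b_1,\ldots,b_r) \neq 0$; the symmetric basis exchange axiom follows from (GP3) applied with $\{y_1,\ldots,y_{r-1}\}$ chosen to kill all but two terms of the hypersum. For each basis $B = (b_1,\ldots,b_r)$ and each $e \in E \setminus B$, define the fundamental vector $X_{B,e} \in F^E$ supported on $B \cup \{e\}$ by setting $X_{B,e}(e) = 1$ and
\[
X_{B,e}(b_i) \; = \; (-1)^i \, \varphi(b_1,\ldots,\widehat{b_i},\ldots,b_r,e) \odot \varphi(b_1,\ldots,b_r)^{-1};
\]
let $\mathcal{C}_\varphi$ be the set of all nonzero scalar multiples of such $X_{B,e}$. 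Axioms (C0) and (C1) are immediate; (C2) reduces to showing that two fundamental vectors with the same support are proportional, which follows from a three-term instance of (GP3). For $(\mathrm{C3})''$, one verifies the hyperrelation~\eqref{eq:lincomb} directly: for any basis $B$ and any other fundamental vector $X_{B',e'}$, the coefficients demanded by the right-hand side of~\eqref{eq:lincomb} are themselves entries of $X_{B',e'}$ along $E \setminus B$, and the resulting hyperrelation is a repackaging of (GP3) applied to the pair $(B \cup \{e'\}, B \setminus \{f\})$.

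From circuits to Grassmann-Pl\"ucker: given $\mathcal{C}$ satisfying (C0)-(C3), let $\underline{M}$ be its underlying matroid and fix an ordered basis $B_0$. Set $\varphi(B_0) = 1$, $\varphi(x_1,\ldots,x_r) = 0$ whenever $\{x_1,\ldots,x_r\}$ is not a basis, and extend to the remaining bases by propagating along single-element exchanges: if $B' = B \setminus \{b_i\} \cup \{e\}$ is an adjacent basis, then $\varphi(B')$ is forced by the (normalized) fundamental $F$-circuit $X_{B,e}$, whose existence and uniqueness up to scalar are supplied by (C2). Signs are inserted to enforce (GP2). The crux is well-definedness: two sequences of exchanges connecting $B_0$ to a given basis must yield the same value of $\varphi$. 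This reduces, via standard graph-of-bases arguments, to consistency around ``elementary quadrilaterals'' of bases differing by two swaps, and the requisite consistency is exactly what strong modular elimination (C3), applied to modular triples of $F$-circuits arising from these quadrilaterals, provides. Once $\varphi$ is defined, (GP3) is verified by recognizing each instance as the orthogonality between a circuit supported on $\{x_1,\ldots,x_{r+1}\}$ and a cocircuit expanding a single minor, and then extracting the required hypersum containment from (C3).

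The main obstacle will be the well-definedness of $\varphi$ in the second direction. Because hyperaddition is multivalued, each single exchange only pins $\varphi(B')$ down via a hypersum relation rather than a strict equality, so one must control how these ambiguities interact along different paths and show the final value is nonetheless forced. This is precisely where strong modular elimination $(\mathrm{C3})$ is essential rather than the weak pairwise version $(\mathrm{C3})'$: the Pl\"ucker relations (GP3) are hypersums of up to $r+1$ terms, and iterated weak elimination does not suffice to close them. The bulk of the proof will therefore be a careful inductive analysis that tracks modularity hypotheses across a sequence of circuit eliminations; I expect to closely parallel the phased-matroid argument of Anderson-Delucchi \cite{AndersonDelucchi}, with the crucial modifications needed to replace their two-term eliminations by the stronger modular $(k+1)$-term version (C3) whenever the ``sufficiently close supports'' hypothesis fails.
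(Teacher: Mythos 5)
Your forward direction (GP) $\Rightarrow$ (C) tracks the paper closely: the paper also builds $\cC_\varphi$ from the ratio formula \eqref{eq:CircuitsFromGP}, verifies (C2) via a three-term Grassmann-Pl\"ucker relation, and derives the circuit elimination axiom from (GP3) (Theorem~\ref{thm:Prop5.3}); the main cosmetic difference is that you route through $(\mathrm{C3})''$ while the paper verifies (C3) directly. The reverse direction is where you genuinely diverge, and where the sketch is too thin to evaluate as a proof.

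The paper does not go directly from circuits to a Grassmann-Pl\"ucker function. It closes a three-way cycle: (GP) $\Rightarrow$ (C) $\Rightarrow$ (DP) $\Rightarrow$ (GP) (Theorem~\ref{thm:maintheorem}). The middle arrow, Theorem~\ref{thm:Prop5.6}, constructs the cocircuit signature $\cD$ from $\cC$ and proves $\cC \perp \cD$; this is where strong modular elimination (C3) does its real work. The last arrow, Theorem~\ref{thm:Prop4.6}, then uses \emph{only} the orthogonality (DP3) to build $\varphi$ and to verify (GP3). Your proposal tries to collapse these two steps into one direct construction (C) $\Rightarrow$ (GP), but your own description of how (GP3) gets checked --- ``recognizing each instance as the orthogonality between a circuit $\ldots$ and a cocircuit expanding a single minor'' --- presupposes that such cocircuits exist and are orthogonal to the circuits. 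That presupposition \emph{is} Theorem~\ref{thm:Prop5.6}, a substantial result whose proof involves a delicate induction built on applications of (C3) to carefully engineered modular families. So the hard content of the reverse direction is not avoided; it is hidden inside a claim you state as if it were given.

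Relatedly, the ``elementary quadrilateral'' well-definedness argument is plausible (the ratio relations in \eqref{eq:CircuitsFromGP} are single-valued, so well-definedness is a genuine equality, not a hypersum containment), but you should be careful about which axiom it really uses: the quadrilateral consistency is a degenerate (two nonzero terms) instance of the three-term relation and hence is governed by the \emph{weak} data (C0)--(C2) and $(\mathrm{C3})'$; it is the verification of the full (GP3) afterwards that requires (C3) in full strength. Your sketch conflates these two uses. If you want to carry out the direct (C) $\Rightarrow$ (GP) route, you should separate (i) defining $\varphi$ and proving well-definedness (a weak-axiom computation), from (ii) establishing orthogonality of circuits with the induced cocircuit signature (the Theorem~\ref{thm:Prop5.6} content, which is where (C3) enters), and (iii) translating that orthogonality into (GP3) (the Theorem~\ref{thm:Prop4.6} computation). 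Once you do that bookkeeping, you will essentially have reconstructed the paper's dual-pair factorization; the paper's route is not longer than yours, it is your route with the joints made explicit.
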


The bijective map from equivalence classes of Grassmann-Pl{\"u}cker functions to strong $F$-matroids in Theorem~\ref{thm:A} can be described explicitly as follows.  
Let $B_\varphi$ be the {\bf support} of $\varphi$, i.e., the collection of all subsets $\{ x_1,\ldots,x_r \} \subseteq E$ such that $\varphi(x_1,\ldots,x_r) \neq 0$.  Then $B_{\varphi}$ is the set of bases for a rank $r$ matroid $M_{\varphi}$ (in the usual sense) on $E$ (cf.~\cite[Remark 2.5]{AndersonDelucchi}).
For each circuit $C$ of $M_{\varphi}$, we define a corresponding projective $F$-circuit
$X \in {\mathbb P}(F^E)$ with ${\rm supp}(X) = C$ as follows.  Let $x_0 \in C$ and let $\{ x_1,\ldots,x_r \}$ be a basis for $M_{\varphi}$ containing $C \backslash x_0$.  Then 
\begin{equation} \label{eq:CircuitsFromGP}
\frac{X(x_i)}{X(x_0)} = (-1)^i \frac{\varphi(x_0,\ldots,\hat{x}_i,\ldots,x_r)}{\varphi(x_1,\ldots,x_r)}.
\end{equation}
We will show that this is well-defined, and give an explicit description of the inverse map from strong $F$-matroids to equivalence classes of Grassmann-Pl{\"u}cker functions.


\begin{remark}
When $F={\mathbb K}$ is the Krasner hyperfield, it is not difficult to see that (\ref{eq:GP3}) is equivalent to the following well-known condition characterizing the set of bases of a matroid (cf.~\cite[Condition (B2), p.17]{Oxley}):
\begin{itemize}
\item (Basis Exchange Axiom) Given bases $B,B'$ and $b \in B \backslash B'$, there exists $b' \in B' \backslash B$ such that $(B \cup \{ b' \}) \backslash \{ b \}$ is also a basis.
\end{itemize}
\end{remark}

\begin{defn}
 A {\bf weak Grassmann-Pl{\"u}cker function of rank $r$ on $E$ with coefficients in $F$} is a function $\varphi : E^r \to F$ such that the support of $\varphi$ is the set of bases of a rank $r$ matroid on $E$ and $\varphi$ satisfies (GP1), (GP2), and the following variant of (GP3):
\begin{itemize}
\item ${\rm (GP3)}'$ [3-term Grassmann--Pl{\"u}cker relations] Equation (\ref{eq:GP3}) holds for any two subsets $I=\{ x_1,\ldots,x_{r+1} \}$ and $J=\{ y_1,\ldots,y_{r-1} \}$ of $E$ with $|I \backslash J|=3$.
\end{itemize}
\end{defn}

It is clear that any { strong} Grassmann-Pl\"ucker function is also a weak Grassmann-Pl\"ucker function.

\begin{theorem} \label{thm:Aprime}
Let $E$ be a non-empty finite set, let $F$ be a hyperfield, and let $r$ be a positive integer. 
There is a natural bijection between equivalence classes of weak Grassmann-Pl{\"u}cker functions of rank $r$ on $E$ with coefficients in $F$ and weak $F$-matroids of rank $r$ on $E$, defined via axioms {\rm (C0)} through {\rm (C2)} and ${\rm (C3)}'$.
\end{theorem}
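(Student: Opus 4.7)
The plan is to parallel the strategy for Theorem \ref{thm:A}, replacing the strong elimination axiom (C3) with the weak one (C3)$'$ throughout and verifying that the corresponding calculations require only the 3-term Grassmann-Pl\"ucker relations (GP3)$'$. The underlying principle is Lemma \ref{lem:white}(5): a modular pair of circuits in an ordinary matroid is precisely a pair of fundamental circuits $C(B,e_1), C(B,e_2)$ with respect to a common basis $B$. Because modular elimination in (C3)$'$ is a statement only about such pairs, the three-term identities turn out to be exactly what is needed.

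For the forward direction, given a weak Grassmann-Pl\"ucker function $\varphi$, the set $B_\varphi$ is by hypothesis the set of bases of a matroid $M_\varphi$, and for each circuit $C$ I would define the associated projective $F$-circuit via formula (\ref{eq:CircuitsFromGP}). Well-definedness of this assignment (independence of the choice of $x_0 \in C$ and of the completing basis) and verification of (C0), (C1), (C2) are formal and proceed as in the proof of Theorem \ref{thm:A}. The real content is (C3)$'$: given a modular pair $X,Y \in \mathcal{C}$ and $e \in E$ with $X(e) = -Y(e) \neq 0$, I would invoke Lemma \ref{lem:white}(5) to fix a basis $B$ and distinct $e_1, e_2 \notin B$ with $\underline{X} = C(B,e_1)$ and $\underline{Y} = C(B,e_2)$. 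By Lemma \ref{lem:white}(4) there is a unique candidate support for $Z$, and its coefficients, expressed via (\ref{eq:CircuitsFromGP}), land in the desired hypersum $X(f) \boxplus Y(f)$ for each $f$ as a direct consequence of a single 3-term Grassmann-Pl\"ucker relation applied to $\{e_1,e_2,e\}$ together with the remaining elements of $B$.

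For the reverse direction, given weak $F$-circuits $\mathcal{C}$ with underlying matroid $\underline{M}$, I would define $\varphi : E^r \to F$ by fixing a basis $B_0$, normalizing $\varphi(B_0)=1$, and propagating values to all ordered $r$-tuples through chains of single-element basis exchanges using the ratios dictated by (\ref{eq:CircuitsFromGP}). Alternation (GP2) is then forced by standard sign conventions. Well-definedness and the 3-term relations (GP3)$'$ reduce to checking one elementary exchange square: that two sequences of single exchanges connecting the same pair of bases yield the same value. This is exactly the content of modular elimination (C3)$'$ applied to the modular pair of fundamental circuits $C(B,e_1), C(B,e_2)$ attached to the common basis $B$ appearing in that square, which by Lemma \ref{lem:white}(5) really is modular.

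The main obstacle is ensuring that the argument which in the strong case handles the full (GP3) \emph{--} a hypersum over $r+1$ potentially non-modular pairs \emph{--} can in the weak case be reorganized to invoke only modular pairs. This is possible precisely because (GP3)$'$ restricts attention to $(r+1)$- and $(r-1)$-subsets with $|I \setminus J|=3$: in that regime every pair of fundamental circuits that shows up in the elimination step shares a common basis and is therefore modular in the sense of Lemma \ref{lem:white}(5). Once this observation is in place, the bookkeeping mirrors the strong case, and the two constructions are manifestly mutually inverse since both are determined by the same normalization-and-extension procedure along single basis exchanges.
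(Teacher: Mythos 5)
Your forward direction (weak GP function $\Rightarrow$ weak circuits) is essentially the paper's argument (Theorem~\ref{thm:Prop5.3} restricted to $k=1$, paired with the well-definedness lemma), and the appeal to Lemma~\ref{lem:white} is the right way to see that the $3$-term relations suffice.

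The gap is in the reverse direction, and it is not a small one: the paper does \emph{not} build $\varphi$ directly from $\cC$. It instead runs the cycle (C) $\Rightarrow$ (DP) $\Rightarrow$ (GP): first Theorem~\ref{thm:Prop5.6} manufactures the dual signature $\cD$ from $\cC$, and only then does Theorem~\ref{thm:Prop4.6} build $\varphi$ from the \emph{pair} $(\cC,\cD)$, using orthogonality $\cC\perp\cD$ as the engine behind both well-definedness and ${\rm (GP3)}'$. Your proposal collapses this to a normalize-and-propagate construction along single basis exchanges and asserts that ``well-definedness and the $3$-term relations reduce to checking one elementary exchange square.'' That assertion is doing an enormous amount of unjustified work. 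Well-definedness of propagation along the basis-exchange graph is equivalent to the statement that the accumulated ratio around \emph{every} cycle equals $1$; for that to reduce to elementary exchange squares you would need that the cycle space of the basis-exchange graph is generated by such squares, which you neither prove nor cite, and which is not obviously true (the exchange graph contains $3$-cycles and longer cycles of various shapes, not only commuting-exchange $4$-cycles). Moreover, even for a single square, deducing the needed identity from ${\rm (C3)}'$ alone requires a calculation you don't carry out, and the reason Anderson--Delucchi (and this paper) take a $4.5$-page detour through the dual pair is precisely that orthogonality with $\cD$ supplies consistency constraints that are not visibly encoded in $\cC$ by itself.

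So the proposal takes a genuinely different route but does not close it. If you want a proof that avoids dual pairs you would need, at minimum, a theorem controlling the cycle space of the basis-exchange graph and a careful translation of ${\rm (C3)}'$ into the exchange-square identity for $\varphi$; short of that, you should follow the paper and pass through Theorem~\ref{thm:Prop5.6} (constructing $\cD$ from $\cC$ using ${\rm (C3)}'$) and Theorem~\ref{thm:Prop4.6} (constructing $\varphi$ from the weak dual pair, with ${\rm (GP3)}'$ coming from the orthogonality relations with $|\underline X\cap\underline Y|\le 3$).
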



\subsection{Grassmannians over hyperfields}
\label{sec:Dressian}

For concreteness and ease of notation, write $E=\{ e_1,\ldots,e_m \}$ and let $S$ denote the collection of $r$-element subsets of $\{ 1,\ldots, m\}$, so that $|S|=\binom{m}{r}$.
Given a Grassmann-Pl{\"u}cker function $\varphi$, define the corresponding {\bf Pl{\"u}cker vector} $p = (p_I)_{I \in S} \in F^S$ by $p_I := \varphi(e_{i_1},\ldots,e_{i_r})$,
where $I = \{ i_1,\ldots,i_r \}$ and $i_1 < \cdots < i_r$.  Clearly $\varphi$ can be recovered uniquely from $p$.  The vector $p$ satisfies an analogue of the Grassmann--Pl{\"u}cker relations (GP3); for example, the 3-term relations can be rewritten as follows:
for every $A \subset \{ 1,\ldots,m \}$ of size $r-2$ and $i,j,k,\ell \in \{ 1,\ldots,m \} \backslash A$, we have
\begin{equation}
\label{eq:3termGP}
0 \in p_{A \cup i \cup j} \odot p_{A \cup k \cup \ell} \boxplus -p_{A \cup i \cup k} \odot p_{A \cup j \cup \ell} \boxplus p_{A \cup i \cup \ell} \odot p_{A \cup j \cup k}.
\end{equation}

More generally, for all subsets $I,J$ of $\{ 1,\ldots, m \}$ with $|I|=r+1$, $|J|=r-1$, and $|I \backslash J| \geq 3$, the point $p=(p_I)$ lies on the ``subvariety'' of the projective space in the $\binom{m}{r}$ homogeneous variables $x_I$ for $I \in S$ defined by
\begin{equation}
\label{eq:GP}
0 \in \bigboxplus_{i \in I} {\rm sign}(i;I,J) \odot x_{J \cup i} \odot x_{I \backslash i},
\end{equation}
where ${\rm sign}(i;I,J)=(-1)^s$ with $s$ equal to the number of elements $i' \in I$ with $i < i'$ plus the number of elements $j \in J$ with $i < j$.

{ 
Although we will not explore this further in the present paper, one can view the ``equations'' (\ref{eq:GP}) as defining a hyperring scheme $G(r,m)$ 
in the sense of \cite{JunHyperringScheme}, which we call the {\bf $F$-Grassmannian}.
In this geometric language, Theorem~\ref{thm:A} says that a strong matroid of rank $r$ on $\{ 1,\ldots, m \}$ over a hyperfield $F$ can be identified with an $F$-valued point of $G(r,m)$; thus $G(r,m)$ is a ``moduli space'' for rank $r$ matroids over $F$.
If $F=K$ is a field, the $K$-Grassmannian $G(r,m)$ coincides with the usual Grassmannian variety over $K$.
If $F={\mathbb T}$ is the tropical hyperfield, the ${\mathbb T}$-Grassmannian $G(r,m)$ is what Maclagan and Sturmfels \cite[\S{4.4}]{MaclaganSturmfels} call the {\bf Dressian} $D(r,m)$ (in order to distinguish it from a tropicalization of the Pl{\"u}cker embedding of the usual Grassmannian).
}

\subsection{Duality} \label{sec:Duality}

There is a duality theory for matroids over hyperfields which generalizes the established duality theory for matroids, oriented matroids, valuated matroids, etc.  (For matroids over fields, it corresponds to orthogonal complementation.)

\begin{theorem} \label{thm:B}
Let $E$ be a non-empty finite set with $|E|=m$, let $F$ be a hyperfield {endowed with an involution $x \mapsto \involution{x}$},
and let $M$ be a strong (resp. weak) $F$-matroid of rank $r$ on $E$ with strong (resp. weak) $F$-circuit set ${\mathcal C}$ and Grassmann-Pl{\"u}cker function (resp. weak Grassmann-Pl{\"u}cker function) $\varphi$.
There is a strong (resp. weak) $F$-matroid $M^*$ of rank $m-r$ on $E$, called the {\bf dual $F$-matroid} of $M$, with the following properties:
\begin{itemize}
\item The $F$-circuits of $M^*$ are the elements of ${\mathcal C}^* := {\rm SuppMin}({\mathcal C}^\perp - \{ 0 \})$, where ${\rm SuppMin}(S)$ denotes the elements of $S$ of minimal support.
\item A Grassmann-Pl{\"u}cker function (resp. weak Grassmann-Pl{\"u}cker function) $\varphi^*$ for $M^*$ is defined by the formula
\[
\varphi^*(x_1,\ldots,x_{m-r}) = {\rm sign}(x_1,\ldots,x_{m-r},x_1',\ldots,x_r') \involution{\varphi(x_1',\ldots,x_r')},
\]
where $x_1',\ldots,x_r'$ is any ordering of $E \backslash \{ x_1,\ldots,x_{m-r} \}$.
\item The underlying matroid of $M^*$ is the dual of the underlying matroid of $M$, i.e., $\underline{M^*} = \underline{M}^*$.
\item $M^{**} = M$.
\end{itemize}
\end{theorem}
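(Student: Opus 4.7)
The plan is to prove the theorem by working primarily with the Grassmann--Pl\"ucker description of $F$-matroids, using Theorems~\ref{thm:A} and \ref{thm:Aprime} to translate between Grassmann--Pl\"ucker functions and $F$-circuits. First I would define $\varphi^*$ by the displayed formula and verify that it is a (strong or weak) Grassmann--Pl\"ucker function of rank $m-r$ on $E$. Nonvanishing (GP1) is immediate from GP1 for $\varphi$. The alternating axiom (GP2) reduces to the elementary observation that swapping two entries among $x_1,\ldots,x_{m-r}$ flips the sign of the permutation $(x_1,\ldots,x_{m-r},x_1',\ldots,x_r')$ while leaving the unordered complement $\{x_1',\ldots,x_r'\}$ fixed (one fixes an arbitrary ordering convention on each complement once and for all, and checks independence from this choice).

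The main obstacle is to verify the Grassmann--Pl\"ucker relations (GP3) for $\varphi^*$ from those for $\varphi$ (and likewise $\mathrm{(GP3)}'$ in the weak case). Given sets $I$ and $J$ of sizes $m-r+1$ and $m-r-1$, expanding the would-be relation for $\varphi^*$ via the defining formula produces a hypersum whose terms are indexed by elements of $I$, with those in $I \cap J$ vanishing by alternation. Under complementation in $E$, each nonzero term corresponds to a term in the Grassmann--Pl\"ucker relation for $\varphi$ applied to the pair $(E \setminus J, E \setminus I)$, of sizes $r+1$ and $r-1$; a standard sign bookkeeping computation, essentially identical to the classical field-theoretic proof that the Pl\"ucker coordinates of a subspace and its orthogonal complement satisfy matching relations, shows that the signs agree. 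This transfers to the hyperfield setting without change because the relation is of the form $0 \in \bigboxplus \cdots$ and the operations of involution and multiplication by a nonzero scalar descend termwise through hyperaddition. For the weak case, note that the hypothesis $|I \setminus J| = 3$ is self-dual under the substitution $(I,J) \mapsto (E \setminus J, E \setminus I)$, so the 3-term relations correspond to 3-term relations. Once $\varphi^*$ is known to be a Grassmann--Pl\"ucker function, Theorem~\ref{thm:A} (resp. \ref{thm:Aprime}) produces a strong (resp. weak) $F$-matroid $M^*$ of rank $m-r$. The identity $\underline{M^*} = \underline{M}^*$ is immediate, since the support of $\varphi^*$ consists precisely of the complements of bases of $\underline{M}$. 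Double duality $M^{**}=M$ then follows by applying the defining formula twice: the product of the two sign factors equals $(-1)^{r(m-r)}$, the double involution is the identity, and so $\varphi^{**}$ is a nonzero scalar multiple of $\varphi$.

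For the circuit description, I would substitute (\ref{eq:CircuitsFromGP}) into the inner product $X \odot Y = \bigboxplus_{e \in E} X(e) \odot \overline{Y(e)}$ for an $F$-circuit $X$ of $M$ (arising from $\varphi$) and an $F$-circuit $Y$ of $M^*$ (arising from $\varphi^*$). After clearing scalars and rearranging, the resulting hypersum is recognizably an instance of a 3-term Grassmann--Pl\"ucker relation for $\varphi$, applied to the pair of subsets arising from the two fundamental circuits involved, and therefore contains $0$; hence $X \perp Y$. Thus every $F$-circuit of $M^*$ lies in $\mathcal{C}^\perp \setminus \{0\}$. Support-minimality follows from the fact that the supports of these circuits are the circuits of $\underline{M}^*$, while any element of $\mathcal{C}^\perp$ has support meeting every basis of $\underline{M}$ in the appropriate way so as to contain a circuit of $\underline{M}^*$ (the classical matroid duality at the level of supports). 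Conversely, every support-minimal nonzero element of $\mathcal{C}^\perp$ is an $F$-circuit of $M^*$, by the uniqueness (from axioms (C0)--(C2)) of a projective $F$-circuit with each prescribed support.
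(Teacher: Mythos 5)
Your route --- dualizing the Grassmann--Pl\"ucker function, passing to circuits via Theorems~\ref{thm:A} and~\ref{thm:Aprime}, and then verifying the orthogonality characterization of $\mathcal{C}^*$ directly --- is essentially the paper's route, which packages the same three steps as Lemma~\ref{lem:Lemma3.2}, Lemma~\ref{lem:Prop4.3}, and Lemma~\ref{lem:Prop5.8} fed into Theorem~\ref{thm:maintheorem}.  The verification of (GP1)--(GP3) for $\varphi^*$, the identification of $\underline{M^*}$ as $\underline{M}^*$, and the double-duality sign computation are all fine.

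There is, however, a genuine error in the circuit-orthogonality step.  When you substitute~(\ref{eq:CircuitsFromGP}) into $X \odot Y$ for $X \in \mathcal{C}$ and $Y$ an $F$-circuit of $M^*$, the resulting hypersum has one nonzero summand for each element of $\underline{X} \cap \underline{Y}$, and this intersection can contain far more than three elements (up to $\min(r+1,m-r+1)$).  So what you arrive at is an instance of the \emph{general} Grassmann--Pl\"ucker relation (GP3), not the 3-term relation ${\rm (GP3)}'$; this is exactly the calculation in Lemma~\ref{lem:Prop4.3}.  The distinction matters: if ${\rm (GP3)}'$ alone forced full orthogonality of all circuit-cocircuit pairs, then every weak $F$-matroid would be strong, which Example~\ref{eg:danscex} refutes.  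In the strong case you must invoke (GP3) in full, which you have since $\varphi$ is a strong Grassmann--Pl\"ucker function; in the weak case only pairs with $|\underline{X} \cap \underline{Y}| \leq 3$ need checking (this is what ${\rm (DP3)}'$ and Theorem~\ref{thm:Cprime} require), and for those the 3-term relation is the correct tool.  A smaller gap: the uniqueness, up to scalar, of an element of $\mathcal{C}^\perp$ supported on a given cocircuit $D$ does not follow from (C0)--(C2) applied to $\mathcal{C}^*$; it follows from orthogonality against the fundamental $F$-circuits $X_{D,e,f}$ as in the construction of Theorem~\ref{thm:Prop5.6}, which determine the ratios $W(e)/W(f)$ on $D$.
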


The $F$-circuits of $M^*$ are called the {\bf $F$-cocircuits} of $M$, and vice-versa.

\subsection{Dual pairs}
\label{sec:DualPairs}

Let $M$ be a (classical) matroid with ground set $E$.  We call a subset ${\mathcal C}$ of $F^E$ an {\bf $F$-signature of $M$} if ${\mathcal C}$ satisfies properties (C0) and (C1) from Definition~\ref{def:Fcircuits}, and taking supports gives a bijection from the projectivization of ${\mathcal C}$ to circuits of $M$.

\begin{defn}
We say that $({\mathcal C},{\mathcal D})$ is a {\bf dual pair of $F$-signatures of $M$} if:
\begin{itemize}
\item (DP1) ${\mathcal C}$ is an $F$-signature of the matroid $M$.
\item (DP2) ${\mathcal D}$ is an $F$-signature of the dual matroid $M^*$.
\item (DP3) ${\mathcal C} \perp {\mathcal D}$, meaning that $X \perp Y$ for all $X \in \cC$ and $Y \in \cD$.
\end{itemize}
\end{defn}

\begin{theorem} \label{thm:C}
Let $M$ be a matroid on $E$,  let ${\mathcal C}$ be an $F$-signature of $M$, and let ${\mathcal D}$ be an $F$-signature of $M^*$.  Then ${\mathcal C}$ and ${\mathcal D}$ are the set of $F$-circuits and $F$-cocircuits, respectively, of a strong $F$-matroid with underlying matroid $M$ if and only if 
 $({\mathcal C},{\mathcal D})$ satisfies {\rm (DP3)}  (i.e., is a dual pair of $F$-signatures of $M$).
\end{theorem}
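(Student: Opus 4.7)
The forward implication is essentially built into Theorem~\ref{thm:B}: if $\cC$ and $\cD$ are the $F$-circuits and $F$-cocircuits of a strong $F$-matroid $M$, then by the explicit description $\cC^* = {\rm SuppMin}(\cC^\perp \setminus \{0\})$ every $F$-cocircuit lies in $\cC^\perp$, so $X \perp Y$ for all $X \in \cC, Y \in \cD$, giving (DP3).

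For the converse, suppose $(\cC, \cD)$ is a dual pair of $F$-signatures of $M$. My plan is to construct a Grassmann--Pl\"ucker function $\varphi : E^r \to F$ of rank $r = \operatorname{rank}(M)$ directly from $(\cC,\cD)$ and then invoke Theorem~\ref{thm:A} to produce a strong $F$-matroid $M^\varphi$; then I would check that the $F$-circuits of $M^\varphi$ are exactly $\cC$ (forcing $\underline{M^\varphi} = M$) and, via Theorem~\ref{thm:B}, that its $F$-cocircuits are exactly $\cD$. First, fix a reference basis $B_0$ of $M$ and, for each $e \notin B_0$, normalize the (unique up to scalar) element $X_{B_0,e} \in \cC$ with support $C(B_0,e)$ so that $X_{B_0,e}(e) = 1$. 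Set $\varphi(b_1,\ldots,b_r) = 1$ for the chosen ordering of $B_0$, extend by the alternating rule (GP2), declare $\varphi = 0$ off the set of bases of $M$, and for bases reached by an exchange $B' = (B \setminus \{b\}) \cup \{e\}$ define
\[
\frac{\varphi(B')}{\varphi(B)} = -\frac{X_{B,e}(b)}{X_{B,e}(e)},
\]
where $X_{B,e} \in \cC$ is the normalized $F$-circuit with support $C(B,e)$ and $X_{B,e}(e) = 1$.

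The key step is to show this recursive definition is consistent: the basis-exchange graph of $M$ is connected, and well-definedness of $\varphi$ reduces to checking invariance around the elementary cycles in that graph, which correspond to the ``symmetric exchange'' identities for pairs and triples of exchanges. This is precisely where (DP3) is essential: for each cocircuit $Y \in \cD$, the orthogonality relation $X_{B,e} \perp Y$ expresses the ratios $X_{B,e}(b)/X_{B,e}(e)$ in terms of the values of $Y$ at the elements of $B \cup \{e\}$, and chaining these relations around an elementary cycle yields exactly the identity we need. I expect to follow the template of Anderson--Delucchi \cite{AndersonDelucchi} here, where the analogous computation is carried out for $F = \mathbb{P}$; modulo using only the hyperfield axioms, that argument should go through \emph{mutatis mutandis}.

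Once $\varphi$ is well-defined, it is nonzero and alternating by construction, and the Grassmann--Pl\"ucker relations (GP3) translate, after applying Step 1's normalization, into orthogonality relations between an element of $\cC$ and an element of $\cD$ (the cocircuit being the one determined by the complementary flat structure in $M$); these are guaranteed by (DP3). Thus $\varphi$ is a Grassmann--Pl\"ucker function, and by Theorem~\ref{thm:A} it determines a strong $F$-matroid $M^\varphi$. Comparing the formula (\ref{eq:CircuitsFromGP}) with the construction of $\varphi$ shows that the $F$-circuits of $M^\varphi$ coincide with $\cC$, and then Theorem~\ref{thm:B} gives $\cD \subseteq \cC^\perp = \cC^\perp$; since $\cD$ already projects bijectively onto the cocircuits of $M = \underline{M^\varphi}$, it must agree with ${\rm SuppMin}(\cC^\perp \setminus \{0\})$, the $F$-cocircuit set of $M^\varphi$.

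The principal obstacle will be the well-definedness in Step~2: translating the dual-pair orthogonality into a consistency statement for the cocycle $\varphi(B')/\varphi(B)$ on the basis-exchange graph requires some careful bookkeeping with signs and with modular pairs, and it is essentially the only place where hypothesis (DP3) does substantive work. The Grassmann--Pl\"ucker relations themselves are, by comparison, a direct consequence of orthogonality once the definition is in hand, and the remaining identifications of $\cC$ and $\cD$ with the $F$-circuits and $F$-cocircuits of $M^\varphi$ are formal.
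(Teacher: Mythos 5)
Your proposal is correct and follows essentially the same route as the paper: the forward direction is immediate from Theorem~\ref{thm:B}, and the converse is the composition $(\mathrm{DP}) \Rightarrow (\mathrm{GP}) \Rightarrow (\mathrm{C})$, which is exactly how Theorem~\ref{thm:maintheorem} establishes it via Theorems~\ref{thm:Prop4.6} and~\ref{thm:Prop5.3}, both of which in turn defer the heavy lifting (well-definedness of $\varphi$ on the basis-exchange graph and the verification of (GP3)) to the Anderson--Delucchi argument, just as you plan to. The one place where your packaging differs slightly is the final identification of $\cD$: the paper's Theorem~\ref{thm:Prop4.6} proves $\cD = \cD_\varphi$ directly as part of the construction, whereas you instead recover it afterwards from $\cD \subseteq \cC^\perp$ together with the fact that both $\cD$ and the $F$-cocircuit set of $M^\varphi$ are $F$-signatures of $M^*$ (hence closed under scaling and in projective bijection with the cocircuits). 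That deduction is fine, but as written it tacitly uses Lemma~\ref{lem:Prop5.8} (to know that the support-minimal elements of $\cC^\perp$ have exactly cocircuit supports); you should cite it, or simply note that Anderson--Delucchi's Proposition~4.6 already delivers $\cD = \cD_\varphi$ and drop the detour through $\cC^\perp$.
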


\begin{defn}
We say that $({\mathcal C},{\mathcal D})$ is a {\bf weak dual pair of $F$-signatures of $M$} if ${\mathcal C}$ and ${\mathcal D}$ satisfy (DP1),(DP2), and the following weakening of (DP3):
\begin{itemize}
\item ${\rm (DP3)}'$ ${\mathcal C} \perp {\mathcal D}$ for every pair $X \in \cC$ and $Y \in \cD$ with $|\underline{X} \cap \underline{Y}| \leq 3$.
\end{itemize}
\end{defn}

\begin{theorem} \label{thm:Cprime}
Let $M$ be a matroid on $E$,  let ${\mathcal C}$ be an $F$-signature of $M$, and let ${\mathcal D}$ be an $F$-signature of $M^*$.  Then ${\mathcal C}$ and ${\mathcal D}$ are the set of $F$-circuits and $F$-cocircuits, respectively, of a weak $F$-matroid with underlying matroid $M$ if and only if 
 $({\mathcal C},{\mathcal D})$ satisfies ${\rm (DP3)}'$  (i.e., is a weak dual pair of $F$-signatures of $M$).
\end{theorem}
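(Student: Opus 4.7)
The plan is to follow the same overall strategy as for Theorem~\ref{thm:C}, adapting each step so that everything uses the ``weak'' analogues: axioms ${\rm (C3)}'$, ${\rm (GP3)}'$, ${\rm (DP3)}'$, and the weak Grassmann--Pl\"ucker bijection of Theorem~\ref{thm:Aprime}. The guiding principle is that the condition $|\underline{X} \cap \underline{Y}| \leq 3$ in ${\rm (DP3)}'$ matches, on the Grassmann--Pl\"ucker side, the $|I \setminus J| = 3$ restriction in ${\rm (GP3)}'$, and, on the circuit side, the modular-pair restriction in ${\rm (C3)}'$ (via Lemma~\ref{lem:white}).

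For the forward direction, assume that $\mathcal{C}$ and $\mathcal{D}$ are the $F$-circuits and $F$-cocircuits of a weak $F$-matroid $M$. By Theorem~\ref{thm:Aprime} there is an associated weak Grassmann--Pl\"ucker function $\varphi$, and by Theorem~\ref{thm:B} the cocircuits can be described by the dual weak Grassmann--Pl\"ucker function $\varphi^*$. Given $X \in \mathcal{C}$ and $Y \in \mathcal{D}$ with $|\underline{X} \cap \underline{Y}| \leq 3$, I would use formula~\eqref{eq:CircuitsFromGP} (applied to $\varphi$ and to $\varphi^*$) to rewrite the inner product $X \odot Y$ as a scalar multiple of an expression appearing in a Grassmann--Pl\"ucker relation of the form~\eqref{eq:GP3}, in which the sets $I, J$ satisfy $|I \setminus J| = |\underline{X} \cap \underline{Y}| \leq 3$. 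The relation ${\rm (GP3)}'$ then forces $0 \in X \odot Y$, i.e., $X \perp Y$.

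For the reverse direction, assume $({\mathcal C},{\mathcal D})$ satisfies ${\rm (DP3)}'$; I must verify ${\rm (C3)}'$ for $\mathcal{C}$ (the argument for $\mathcal{D}$ is symmetric since $M^{**} = M$). Take a modular pair $X, Y \in \mathcal{C}$ and $e$ with $X(e) = -Y(e) \neq 0$. By Lemma~\ref{lem:white}(4), there is a unique matroid circuit $C_3 \subseteq (\underline{X} \cup \underline{Y}) \setminus \{e\}$ containing $\underline{X} \triangle \underline{Y}$, and by property (C2) this determines a projective $F$-circuit $Z \in \mathcal{C}$ up to scalar. The plan is to fix the scalar at some $f_0 \in \underline{X} \triangle \underline{Y}$ by setting $Z(f_0)$ equal to the unique element of $X(f_0) \boxplus Y(f_0)$ (which is a singleton because exactly one summand vanishes there), and then show $Z(f) \in X(f) \boxplus Y(f)$ for every other $f$. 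For each such $f$, I would pick a cocircuit $Y^* \in \mathcal{D}$ whose support meets $\underline{Z} \cup \underline{X} \cup \underline{Y}$ in at most three elements that include both $f$ and $f_0$---such a cocircuit exists by standard matroid arguments inside the underlying matroid $\underline M$ applied to modular pairs of circuits. The orthogonality $Z \perp Y^*$, $X \perp Y^*$, $Y \perp Y^*$ given by ${\rm (DP3)}'$ yields three $F$-linear relations in very few unknowns; combining them (using reversibility and the distributive law in $F$) pins down $Z(f)$ to lie in $X(f) \boxplus Y(f)$.

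The main obstacle is the last paragraph: producing, for each $f \in \underline{Z}$, cocircuits of sufficiently small intersection with $\underline{X}, \underline{Y}, \underline{Z}$ whose orthogonality relations actually solve for $Z(f)$. This is precisely where Anderson--Delucchi's argument for phased matroids lives; I would adapt their construction \emph{mutatis mutandis}, using the modular-pair hypothesis on $(X,Y)$ to control the rank of the flat spanned by $\underline{X} \cup \underline{Y}$ and to locate cocircuits crossing it in the desired three-element patterns. Given the paper's stated philosophy (weak elimination and small-intersection orthogonality are cryptomorphic via the same combinatorics that governs the $3$-term Pl\"ucker relations), no new idea beyond this careful bookkeeping should be required.
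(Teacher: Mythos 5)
Your forward direction (circuits $\Rightarrow$ weak dual pair) is a legitimate variant of the paper's argument: rather than proving $\cC \perp_3 \cD$ directly from the weak modular elimination axiom (which is what Theorem~\ref{thm:Prop5.6} does in the paper), you route through the weak Grassmann--Pl\"ucker function and apply~\eqref{eq:CircuitsFromGP} together with ${\rm (GP3)}'$; that is essentially Lemma~\ref{lem:Prop4.3}(1), and it is correct. So one of the two implications is sound, just reorganized.

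The reverse direction, however, has a real gap, and it occurs precisely where you confess that ``careful bookkeeping'' is all that remains. The paper does \emph{not} go (DP) $\Rightarrow$ (C) directly: it goes (DP) $\Rightarrow$ (GP) via Theorem~\ref{thm:Prop4.6}, then (GP) $\Rightarrow$ (C) via Theorem~\ref{thm:Prop5.3}, and this detour is not merely a stylistic choice. Your plan is to fix $Z$ at a base point $f_0 \in \underline X \triangle \underline Y$ (where $X(f_0) \boxplus Y(f_0)$ is a singleton) and then, for each other $f$, produce a cocircuit $Y^* \in \cD$ whose support meets $\underline X \cup \underline Y \cup \underline Z$ in a tiny set containing $\{f_0, f\}$, so that the orthogonality relations $X \perp Y^*$, $Y \perp Y^*$, $Z \perp Y^*$ become two-term and solvable. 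But the circuit--cocircuit intersection property blocks this in exactly the nontrivial case: a cocircuit meets every circuit in $0$ or $\geq 2$ elements, so a $Y^*$ with $\underline{Y^*} \cap (\underline X \cup \underline Y) = \{f_0, f\}$ forces $\{f_0, f\}$ to intersect each of $\underline X$ and $\underline Y$ in $0$ or $2$ elements. If $f \in \underline X \cap \underline Y$ --- the only case in which $X(f) \boxplus Y(f)$ is genuinely multivalued and the claim has content --- then $f_0$ would also have to lie in $\underline X \cap \underline Y$, contradicting the choice $f_0 \in \underline X \triangle \underline Y$. So the single-cocircuit transfer from $f_0$ to $f$ is impossible precisely when it is needed, and ``combining the three $F$-linear relations'' never gets off the ground. (A chain of several cocircuits would have to be constructed with care, and without that construction one is back to exactly the type of direct orthogonality manipulation the paper flags in \S\ref{sec:meaculpa} as the source of the earlier error; your citation of Anderson--Delucchi here is also misplaced, since their (DP) $\Rightarrow$ circuit-elimination argument likewise proceeds through the Grassmann--Pl\"ucker function rather than directly.) To fill the gap you should follow the paper's cycle: reconstruct a weak Grassmann--Pl\"ucker function from the weak dual pair as in Theorem~\ref{thm:Prop4.6}, then deduce ${\rm (C3)}'$ from ${\rm (GP3)}'$ as in Theorem~\ref{thm:Prop5.3}.
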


\subsection{Minors}
\label{sec:minors}

Let ${\mathcal C}$ be the set of $F$-circuits of a (strong or weak) $F$-matroid $M$ on $E$, and let $A \subseteq E$.
For $X \in {\mathcal C}$, define $X \backslash A \in F^{E \backslash A}$ by $(X \backslash A)(e) = X(e)$ for $e \not\in A$.
(Thus $X \backslash A$ can be thought of as the restriction of $X$ to the complement of $A$.)

Let ${\mathcal C} \backslash A = \{ X \backslash A \; | \; X \in {\mathcal C}, \; \underline{X} \cap A = \emptyset \}$. 
Similarly, let ${\mathcal C} / A = {\rm SuppMin}(\{ X \backslash A \; | \; X \in {\mathcal C} \})$.  

\begin{theorem} \label{thm:D}
Let ${\mathcal C}$ be the set of $F$-circuits of a strong (resp. weak) $F$-matroid $M$ on $E$, and let $A \subseteq E$.
Then ${\mathcal C} \backslash A$ is the set of $F$-circuits of a strong (resp. weak) $F$-matroid $M \backslash A$ on $E \backslash A$, called the {\bf deletion} of $M$ with respect to $A$, whose underlying matroid is $\underline{M} \backslash A$.
Similarly, ${\mathcal C} / A$ is the set of $F$-circuits of a strong (resp. weak) $F$-matroid $M / A$ on $E \backslash A$, called the {\bf contraction} of $M$ with respect to $A$, whose underlying matroid is $\underline{M} / A$.
Moreover, we have $(M \backslash A)^* = M^* / A$ and $(M / A)^* = M^* \backslash A$.
\end{theorem}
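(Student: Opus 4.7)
The strategy is to handle deletion first by direct verification of the $F$-circuit axioms, then define the contraction via duality as $M/A := (M^*\setminus A)^*$ (which is well-defined by Theorem~\ref{thm:B} once deletion is established), and finally identify its $F$-circuit set with $\mathcal{C}/A$ using the orthogonal-complement description of duality. The two commutation identities $(M\setminus A)^*=M^*/A$ and $(M/A)^*=M^*\setminus A$ then drop out of the definition together with $M^{**}=M$.

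For deletion, axioms (C0) and (C1) are immediate from the definition, and (C2) follows by lifting: if $X\setminus A$ and $Y\setminus A$ have comparable supports with $\underline{X},\underline{Y}$ disjoint from $A$, then $\underline{X}\subseteq\underline{Y}$ in $E$, so (C2) for $\mathcal{C}$ yields the desired scalar, which descends. The identification of the underlying matroid with $\underline{M}\setminus A$ is the classical fact that the circuits of $\underline{M}\setminus A$ are the circuits of $\underline{M}$ disjoint from $A$. The real content is modular elimination: given $X',Y'\in\mathcal{C}\setminus A$ forming a modular pair in $\mathrm{supp}(\mathcal{C}\setminus A)$ with $X'(e)=-Y'(e)\neq 0$, lift to $X,Y\in\mathcal{C}$ with $\underline{X},\underline{Y}$ disjoint from $A$. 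The crucial combinatorial step is that $X,Y$ again form a modular pair in $\mathcal{C}$; this follows from Lemma~\ref{lem:white}, because the rank identity $r(\underline{X}\cup\underline{Y})=|\underline{X}\cup\underline{Y}|-2$ holds in $\underline{M}\setminus A$ iff it holds in $\underline{M}$ when $\underline{X}\cup\underline{Y}$ is disjoint from $A$. Axiom ${\rm (C3)}'$ applied in $\mathcal{C}$ then produces $Z\in\mathcal{C}$ with $\underline{Z}\subseteq (\underline{X}\cup\underline{Y})\setminus\{e\}\subseteq E\setminus A$, so $Z\in\mathcal{C}\setminus A$. In the strong case, the same argument works for modular families of size $k+1$, since modularity is also a rank/height condition invariant under passage to minors whose ground set is disjoint from $A$.

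For contraction, set $M/A:=(M^*\setminus A)^*$, which by deletion and Theorem~\ref{thm:B} is a strong (resp.~weak) $F$-matroid. Let $\mathcal{D}$ be the $F$-cocircuit set of $M$; then by Theorem~\ref{thm:B} the $F$-circuits of $M/A$ are $\mathrm{SuppMin}((\mathcal{D}\setminus A)^\perp\setminus\{0\})$ in $F^{E\setminus A}$. For any $X\in\mathcal{C}$ and $Z\in\mathcal{D}$ with $\underline{Z}\cap A=\emptyset$ we have $(X\setminus A)\odot(Z\setminus A)=X\odot Z\ni 0$, since the contributions from coordinates in $A$ vanish because $Z$ is zero there. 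Hence $\{X\setminus A:X\in\mathcal{C}\}\subseteq(\mathcal{D}\setminus A)^\perp$, so $\mathcal{C}/A\subseteq\mathrm{SuppMin}((\mathcal{D}\setminus A)^\perp\setminus\{0\})$. The reverse containment uses the classical minor identity $(\underline{M}\setminus A)^*=\underline{M}^*/A$: the underlying matroid of $M/A$ is $(\underline{M}\setminus A)^*$ applied after passing to duals once more, which equals $\underline{M}/A$; hence its circuits are exactly the support-minimal nonempty sets of the form $\underline{X}\setminus A$ for circuits $\underline{X}$ of $\underline{M}$. Therefore each support-minimal vector in $(\mathcal{D}\setminus A)^\perp$ has the support of some $X\setminus A$ with $X\in\mathcal{C}$, and by (C2) applied in $M/A$ (already established) together with (C2) in $M$, it equals a scalar multiple of $X\setminus A$. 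This gives $\mathcal{C}/A=\mathrm{SuppMin}(\{X\setminus A:X\in\mathcal{C}\})$, as claimed, and identifies $\underline{M/A}=\underline{M}/A$. Applying the same reasoning with $M$ replaced by $M^*$ and using $M^{**}=M$ yields the two commutation identities.

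The main obstacle is the modular-pair (and, in the strong case, modular-family) lifting lemma used in the deletion step: one has to ensure that a modular pair of circuits in a deletion minor remains modular in the original matroid, and this is the hinge on which the whole argument turns. Once this is in place, the contraction case and the interaction with duality are essentially formal consequences of Theorems~\ref{thm:B},~\ref{thm:C},~\ref{thm:Cprime}, together with classical matroid-theoretic facts about minors and duality of the underlying matroid.
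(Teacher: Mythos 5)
Your proof is correct, but it takes a genuinely different route from the paper's. The paper deduces Theorem~\ref{thm:D} from the circuit/GP cryptomorphism (Theorem~\ref{thm:maintheorem}) combined with Lemma~\ref{lem:MinorLemma} (deletion and contraction of Grassmann--Pl\"ucker functions are again Grassmann--Pl\"ucker functions, with $(\varphi\setminus A)^*=\varphi^*/A$) and Lemma~\ref{lem:Prop4.3}(3) (the circuit construction $\varphi\mapsto\cC_\varphi$ commutes with minors). In other words, the paper transports the entire question to the GP side, where deletion and contraction are defined by fixing some coordinates, and then translates back. You instead stay on the circuit side for deletion, verifying (C0)--(C3) directly via the observation --- which you correctly isolate as the hinge --- that modular pairs and modular families of circuits contained in $E\setminus A$ have the same heights in the lattices of unions of circuits for $\underline{M}$ and $\underline{M}\setminus A$ (since rank and the relevant sublattice restrict unchanged to $E\setminus A$). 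You then \emph{define} contraction by $M/A:=(M^*\setminus A)^*$ and identify its circuit set using the orthogonal-complement description of duality from Theorem~\ref{thm:B} and the classical identity $(\underline{M}^*\setminus A)^*=\underline{M}/A$. Your route makes the commutation identities formal consequences of the definition, at the cost of having to argue the support-matching step for $\cC/A$ a bit carefully (your phrase ``so $\cC/A\subseteq\mathrm{SuppMin}((\cD\setminus A)^\perp\setminus\{0\})$'' needs the supplementary fact that the supports on both sides are exactly the circuits of $\underline{M}/A$, which you do supply immediately afterward). The paper's route is more uniform --- deletion and contraction are treated symmetrically at the GP level --- and reuses machinery already built for the cryptomorphism; yours is more elementary, requires no GP bookkeeping, and surfaces the modular-family lifting as the single combinatorial fact that powers the deletion case.
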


\subsection{Equivalence of different definitions}
\label{sec:whythesame}

We briefly indicate how to see the equivalence of various flavors of matroids in the literature with our notions of strong and weak $F$-matroid, for some specific choices of the hyperfield $F$.

\begin{example} \label{ex:fieldexample}
When $F=K$ is a field, a strong or weak matroid on $E$ with coefficients in $K$ is the same thing as a vector subspace of $K^E$ in the usual sense.  
Indeed, a weak Grassmann-Pl{\"u}cker function with coefficients in a field $K$ automatically satisfies (GP3) (cf.~the proof of \cite[Theorem 1]{KleimanLaksov}), and the bijection between $r$-dimensional subspaces of $K^E$ and equivalence classes of rank $r$ Grassmann-Pl{\"u}cker functions with coefficients in $K$ also follows from 
{\em loc.~cit.}
\end{example}

\begin{example}
A strong or weak matroid over ${\mathbb K}$ is essentially the same thing as a matroid in the usual sense.  
\end{example}

\begin{example}
A strong or weak matroid over ${\mathbb T}$ is the same thing as a valuated matroid in the sense of Dress--Wenzel \cite{DressWenzelVM}.
This follows from \cite[Theorem 3.2]{MurotaTamura} and the discussion at the top of page 202 in {\em loc.~cit.}
\end{example}

\begin{example} \label{ex:signsexample}
A strong or weak matroid over ${\mathbb S}$ is the same thing as an {\bf oriented matroid} in the sense of Bland--Las Vergnas \cite{BlandLasVergnas}.
This follows for example from \cite[Theorems 3.5.5 and 3.6.2]{OM}.
\end{example}


\subsection{Weak $F$-matroids which are not strong $F$-matroids}
\label{sec:counterexample}
Our first, and more straightforward, example of a weak $F$-matroid which isn't a strong $F$-matroid is over the triangle hyperfield.

\begin{example}
Let $F$ be the triangle hyperfield ${\mathbb V}$ (cf. Example~\ref{ex:triangle}).
Consider the Grassmann-Pl\"ucker function $\varphi$ of rank 3 on the 6 element set $E = \{1,2,\ldots 6\}$ with coeffiecients in $F$ given by
$$\varphi(x_1, x_2, x_3) = \begin{cases}4 &\text{if }\{x_1,x_2,x_3\} = \{1,5,6\}\\2 &\text{if $\{x_1, x_2,x_3\}$ consists of one element from each of}\\ &\text{$\{1\}$, $\{2, 3, 4\}$ and $\{5,6\}$}\\ 1 &\text{otherwise, as long as the $x_i$ are distinct}\\0 &\text{if the $x_i$ are not distinct.}\end{cases}$$ Then $\varphi$ is symmetric under permutation of the second, third and fourth coordinates, as well as under exchange of the fifth and sixth coordinates. It is clear that $\varphi$ satisfies (GP1) and (GP2), and the support of $\varphi$ is the set of bases of the uniform matroid $U_{3,6}$ of rank 3 on 6 elements. 
We will now verify that $\varphi$ also satisfies ${\rm (GP3)}'$. 

\medskip

Suppose we have subsets $I = \{x_1, x_2, x_3, x_4\}$ and $J = \{y_1,y_2\}$ of $E$ with $|I \setminus J| = 3$. Then $|I \cap J| = 1$. So all summands in the corresponding 3-term Grassmann-Pl\"ucker relation are in the set $\{0,1,2,4\}$. In order to show that the relation holds, it suffices to show that there cannot be summands equal to each of 1 and 4. So suppose for a contradiction that $\varphi(x_2,x_3,x_4) \odot \varphi(x_1, y_1, y_2) = 1$ but $\varphi(x_1, x_3, x_4) \odot \varphi(x_2, y_1, y_2) = 4$. Then we have $\varphi(x_2,x_3,x_4) = \varphi(x_1,y_1,y_2) = 1$. Since $\varphi(x_1, y_1, y_2)$ and $\varphi(x_2, y_1, y_2)$ are nonzero, neither $x_1$ nor $x_2$ is in $J$. If $\varphi(x_1, x_3, x_4) = 4$ then $x_1 = 1$ and $\{x_3, x_4\} = \{5,6\}$, but neither 5 nor 6 can be in $J$ since $\varphi(x_1, y_1, y_2) = 1$. So $I$ and $J$ are disjoint, which is impossible. If $\varphi(x_2, y_1, y_2) = 4$ then $x_2 = 1$ and $\{y_1, y_2\} = \{5,6\}$, but neither 5 nor 6 can be in $\{x_3, x_4\}$ since $\varphi(x_2, x_3, x_4) = 1$. So once more $I$ and $J$ are disjoint, which is impossible. The only remaining case is that $\varphi(x_1,x_3,x_4) = \varphi(x_2, y_1, y_2) = 2$. Thus both of these sets contain 1, so that without loss of generality $y_1 = 1$. Then $y_2 \not \in \{5,6\}$, so $x_2 \in \{5,6\}$. Thus neither of $x_3$ or $x_4$ can be 1, so $x_1 = 1$, so that $\varphi(x_1, y_1, y_2) = 0$, again a contradiction.

\medskip

On the other hand, not all of the Grassmann-Pl\"ucker relations are satisfied. Let $I = \{1,2,3,4\}$ and $J= \{5,6\}$. The corresponding Grassmann-Pl\"ucker relation is $0 \in (1 \odot 4) \boxplus (1 \odot 1) \boxplus (1 \odot 1) \boxplus (1 \odot 1)$, which is false.
\end{example}

Our second example is due to Daniel Wei\ss auer, and it shows that weak and strong matroids do not coincide over the phase hyperfield. It has only been verified by an exhaustive computer check, and so we do not provide a proof here. 

\begin{example}\label{eg:danscex}
Consider the weak Grassmann-Pl\"ucker function of rank 3 on the 6-element set $\{x,y,z,t,l,m\}$ given by
$$\begin{array}{rclrclrcl}
\varphi(x,y,z) &=& 1&
\varphi(x,y,t) &=& -1&
\varphi(x,z,t) &=& 1\\
\varphi(y,z,t) &=& -1&
\varphi(x,y,l) &=& e^{(0.9+\pi)i}&
\varphi(x,z,l) &=& e^{2.5i}\\
\varphi(y,z,l) &=& e^{5.5i}&
\varphi(x,t,l) &=& e^{(2.7+\pi)i}&
\varphi(y,t,l) &=& e^{(5.8-\pi)i}\\
\varphi(z,t,l) &=& e^{(0.3 + \pi)i}&
\varphi(x,y,m) &=& e^{(0.5 + \pi)i}&
\varphi(x,z,m) &=& e^{1.2i}\\
\varphi(y,z,m) &=& e^{3.8i}&
\varphi(x,t,m) &=& e^{(3 + \pi)i}&
\varphi(y,t,m) &=& e^{(5.1-\pi)i}\\
\varphi(z,t,m) &=& e^{(0.4 + \pi)i}&
\varphi(x,l,m) &=& e^{3.1i}&
\varphi(y,l,m) &=& e^{0.1i}\\
\varphi(z,l,m) &=& 1&
\varphi(t,l,m) &=& e^{3.1i}&&&
\end{array}$$
and with the remaining values determined by (GP2). This function satisfies ${\rm (GP3)}'$, but it does not satisfy (GP3). Consider for example the lists $(x,y,z,t)$ and $(l,m)$. Applying (GP3) to these lists gives $e^{3.1i} \oplus e^{0.1i} \oplus 1 \oplus e^{3.1i} \ni 0$, which is false.
\end{example}

\section{Realizability}
\label{sec:functoriality}

In this section we discuss the concept of {\em realizability} in the general context of push-forward maps.

\subsection{Homomorphisms}

\begin{defn}
A {\bf hypergroup homomorphism} is a map $f : G \to H$ such that $f(0)=0$ and $f(x \boxplus y) \subseteq f(x) \boxplus f(y)$ for all $x,y \in G$.

A {\bf hyperring homomorphism} is a map $f : R \to S$ which is a homomorphism of additive hypergroups as well as a homomorphism of multiplicative monoids (i.e.,  $f(1)=1$ and 
$ f(x \odot y)=f(x) \odot f(y)$ for $x,y \in R$).

A {\bf hyperfield homomorphism} is a homomorphism of the underlying hyperrings.  
\end{defn}

We define the {\bf kernel} ${\rm ker}(f)$ of a hyperring homomorphism $f$ to be $f^{-1}(0)$. 
Note that a hyperring homomorphism must send units to units, and therefore if $f : R \to S$ is a homomorphism and $R$ is a hyperfield, we must have ${\rm ker}(f)=\{ 0 \}$.


\begin{example}
A hyperring homomorphism from a commutative ring $R$ with $1$ to the Krasner hyperfield ${\mathbb K}$ (cf.~Example~\ref{ex:KrasnerHyperfield}) is the same thing as a prime ideal of $R$, via the correspondence ${\mathfrak p} := {\rm ker}(f)$. 
\end{example}

\begin{example}
A hyperring homomorphism from a commutative ring $R$ with $1$ to the tropical hyperfield ${\mathbb T}$ is the same thing as a prime ideal ${\mathfrak p}$ of $R$ together with a real valuation on the residue field of ${\mathfrak p}$ (i.e., the fraction field  of $R/{\mathfrak p}$).  
 (Similarly, a hyperring homomorphism from $R$ to $\Gamma \cup \{ 0 \}$ for some totally ordered abelian group $\Gamma$
is the same thing as a prime ideal ${\mathfrak p}$ of $R$ together with a Krull valuation on the residue field of ${\mathfrak p}$.)
In particular, a hyperring homomorphism from a field $K$ to ${\mathbb T}$ is the same thing as a real valuation on $K$.
These observations allow one to reformulate the basic definitions in Berkovich's theory of analytic spaces \cite{BerkovichBook} in terms of hyperrings, though we will not explore this further in the present paper.
\end{example}

\begin{example}
 A hyperring homomorphism from a commutative ring $R$ with $1$ to the hyperfield of signs ${\mathbb S}$ is the same thing as a prime ideal ${\mathfrak p}$ together with an ordering on the residue field of ${\mathfrak p}$ in the sense of ordered field theory (see e.g. \cite[\S{3}]{Marshall}).
In particular, a hyperring homomorphism from a field $K$ to ${\mathbb S}$ is the same thing as an ordering on $K$.
This observation allows one to reformulate the notion of {\em real spectrum} \cite{BasuPollackRoy,MarshallBook} in terms of hyperrings, and provides an interesting lens through which to view the analogy between Berkovich spaces and real spectra.
\end{example}

\subsection{Push-forwards and realizability}

Recall that if $F$ is a hyperfield and $M$ is an $F$-matroid on $E$, there is an underlying classical matroid $\underline{M}$, and that classical matroids are the same as matroids over the Krasner hyperfield ${\mathbb K}$.  We now show that the ``underlying matroid'' construction is a special case of a general push-forward operation on matroids over hyperfields.

The following lemma is straightforward from the various definitions involved:

\begin{lemma} \label{lem:pushforward}
If $f : F \to F'$ is a homomorphism of hyperfields and $M$ is a strong (resp. weak) $F$-matroid on $E$, 
\[
\{ c' f_*(X) \; : \; c' \in F', \; X \in \cC(M) \}
\]
is the set of $F'$-circuits of a strong (resp. weak) $F'$-matroid $f_*(M)$ on $E$, called the {\bf push-forward} of $M$.
\end{lemma}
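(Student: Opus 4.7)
The plan is to verify the claim by passing through Grassmann-Pl{\"u}cker functions, which is cleaner than chasing the circuit axioms directly. By Theorems~\ref{thm:A} and \ref{thm:Aprime}, fix a (strong, resp.\ weak) Grassmann-Pl{\"u}cker function $\varphi : E^r \to F$ representing $M$, where $r$ is the rank of $M$. I claim that the composition $f \circ \varphi : E^r \to F'$ is again a (strong, resp.\ weak) Grassmann-Pl{\"u}cker function, and that the strong (resp.\ weak) $F'$-matroid associated to it has precisely the circuit set described in the statement.

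For (GP1), since $F$ is a hyperfield, $\ker(f) = \{0\}$, so $f$ maps $F^\times$ into $(F')^\times$; hence $f \circ \varphi \not\equiv 0$. For (GP2), the identity $f(-x) = -f(x)$ follows from $0 = f(0) \in f(x \boxplus (-x)) \subseteq f(x) \boxplus f(-x)$ together with the uniqueness of hyperinverses in (H1), and $f \circ \varphi$ inherits vanishing on repeated arguments from $\varphi$. For (GP3) (resp.\ ${\rm (GP3)}'$), the key ingredients are the multiplicativity of $f$ and the containment $f(a \boxplus b) \subseteq f(a) \boxplus f(b)$, which extends by induction to $f\bigl(\bigboxplus_k a_k\bigr) \subseteq \bigboxplus_k f(a_k)$. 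Applied termwise to the Grassmann-Pl{\"u}cker relation for $\varphi$, and combined with $f(a \odot b) = f(a) \odot f(b)$ and the sign rule just established, this shows that $0 = f(0)$ lies in the corresponding pushed-forward hypersum over $F'$, as required.

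Next I would compare circuit sets. Let $\widetilde{M}$ denote the strong (resp.\ weak) $F'$-matroid associated to $f \circ \varphi$ via Theorems~\ref{thm:A} and \ref{thm:Aprime}. Because $f$ sends nonzero elements to nonzero elements, $f \circ \varphi$ and $\varphi$ have identical supports, so the underlying matroid of $\widetilde{M}$ coincides with $\underline{M}$. Applying the explicit formula~(\ref{eq:CircuitsFromGP}) to $f \circ \varphi$ and using the multiplicativity identity $f(a/b) = f(a)/f(b)$, one sees that each projective $F'$-circuit of $\widetilde{M}$ is the pointwise pushforward $f_*(X)$ of the corresponding projective $F$-circuit $X$ of $M$. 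Closing under multiplication by $(F')^\times$, as forced by (C1), produces exactly the set $\{ c' f_*(X) : c' \in F', X \in \mathcal{C}(M) \}$ displayed in the statement.

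There is no substantive obstacle. The main point to be mindful of is that the containment $f(a \boxplus b) \subseteq f(a) \boxplus f(b)$ is generally strict, but since each Grassmann-Pl{\"u}cker relation only asserts that $0$ belongs to a certain hypersum, this one-sided inclusion is exactly what is needed to transport the relation across $f$.
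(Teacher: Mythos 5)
Your proof is correct, but it takes a genuinely different route from what the paper intends. The paper states this lemma before it even defines the push-forward $f_*\varphi$ of a Grassmann-Pl\"ucker function, and calls it ``straightforward from the various definitions involved''; the intended argument is a direct verification of (C0)--(C3)${}'$ (resp.\ (C3)) for the set $\{c'\,f_*(X)\}$. The key facts one uses there are that $\ker f = \{0\}$ (so $f_*$ preserves supports and (C0),(C1),(C2) are immediate), that $f(-x)=-f(x)$, and that one can always rescale representatives $X_1,X_2$ of a modular pair over $F$ so that $X_1(e)=-X_2(e)$ before applying modular elimination in $M$ and pushing the resulting $Z$ forward via $f(a\boxplus b)\subseteq f(a)\boxplus f(b)$; a short bookkeeping step then matches the scalings. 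Your route instead invokes the heavier cryptomorphisms of Theorems~\ref{thm:A} and~\ref{thm:Aprime}: you verify (GP1)--(GP3) (resp.\ (GP3)${}'$) for $f\circ\varphi$ --- which the paper also asserts, just after this lemma --- and then recover the circuit set from equation~\eqref{eq:CircuitsFromGP}. This in effect merges Lemma~\ref{lem:pushforward} with the paper's subsequent Lemma~\ref{lem:compatibility_of_pushforwards}. The payoff of your approach is that it avoids chasing the modular elimination axiom directly; the cost is that it leans on the full circuit--GP cryptomorphism, which is the deepest theorem in the paper, to prove something the paper regards as a triviality. Both are sound, and there is no circularity since neither Theorems~\ref{thm:A}/\ref{thm:Aprime} nor \eqref{eq:CircuitsFromGP} depends on this lemma. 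One small point worth flagging: the displayed set in the statement is written with $c'\in F'$, but $c'=0$ would put the zero vector in the set, violating (C0); you correctly read this as $c'\in(F')^\times$, which is surely what is intended.
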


\begin{remark} \label{rmk:pushforward}
If $F$ is a hyperfield, there is a canonical homomorphism $\psi : F \to {\mathbb K}$ sending $0$ to $0$ and all non-zero elements of $F$ to $1$.  If $M$ is an $F$-matroid, the push-forward $\psi_*(M)$ coincides with the underlying matroid $\underline{M}$.
\end{remark}

Given a Grassmann-Pl{\"u}cker function (resp. weak Grassmann-Pl{\"u}cker function) $\varphi : E^r \to F$ and a homomorphism of hyperfields $f : F \to F'$, we define the {\bf push-forward} $f_* \varphi : E^r \to F'$ by the formula
\[
(f_* \varphi)(e_1,\ldots,e_r) = f(\varphi(e_1,\ldots,e_r)).
\]
This is easily checked to once again be a Grassmann-Pl{\"u}cker function (resp. weak Grassmann-Pl{\"u}cker function).

\medskip

As an immediate consequence of (\ref{eq:CircuitsFromGP}), we see that the push-forward of an $F$-matroid can be defined using either circuits or Grassmann-Pl{\"u}cker functions:

\begin{lemma} 
\label{lem:compatibility_of_pushforwards}
If $M_{\varphi}$ is the strong (resp. weak) $F$-matroid associated to the Grassmann-Pl{\"u}cker function (resp. weak Grassmann-Pl{\"u}cker function) 
$\varphi : E^r \to F$, and $f : F \to F'$ is a homomorphism of hyperfields, then $f_*(M_{\varphi}) = M_{f_* \varphi}$.
\end{lemma}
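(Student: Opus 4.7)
The plan is to verify equality of circuit sets by using the explicit formula (\ref{eq:CircuitsFromGP}) and exploiting the fact that a hyperfield homomorphism $f\colon F\to F'$ is a monoid homomorphism on the multiplicative structures that moreover sends $0$ only to $0$.

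First I would observe that because $f$ is a hyperfield homomorphism, $f(x)=0$ iff $x=0$: non-zero elements of $F$ are units, and $f$ sends units to units. Consequently $\varphi(e_1,\dots,e_r)\neq 0$ iff $(f_*\varphi)(e_1,\dots,e_r)\neq 0$, so $B_\varphi=B_{f_*\varphi}$ and the underlying matroids $\underline{M_\varphi}$ and $\underline{M_{f_*\varphi}}$ coincide; call this common matroid $\underline M$. I would also note the standard fact that $f(-1)=-1$ (apply $f$ to $0\in 1\boxplus -1$ and use that $f$ is a hypergroup homomorphism together with uniqueness of hyperinverses), so $f$ commutes with the sign $(-1)^i$.

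Next, fix a circuit $C$ of $\underline M$, choose $x_0\in C$ and a basis $\{x_1,\dots,x_r\}$ of $\underline M$ containing $C\setminus\{x_0\}$, and let $X\in \cC(M_\varphi)$ be the $F$-circuit with support $C$ determined by (\ref{eq:CircuitsFromGP}) and the normalization $X(x_0)=1$. Let $Y\in \cC(M_{f_*\varphi})$ denote the $F'$-circuit with support $C$ similarly normalized. Since $f$ is a monoid homomorphism that respects inverses and signs,
\[
\frac{Y(x_i)}{Y(x_0)}=(-1)^i\frac{f(\varphi(x_0,\dots,\hat x_i,\dots,x_r))}{f(\varphi(x_1,\dots,x_r))}=f\!\left((-1)^i\frac{\varphi(x_0,\dots,\hat x_i,\dots,x_r)}{\varphi(x_1,\dots,x_r)}\right)=f\!\left(\frac{X(x_i)}{X(x_0)}\right),
\]
so the projective $F'$-circuit of $M_{f_*\varphi}$ supported on $C$ is represented by $f_*(X)$.

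Finally, I would conclude by applying Lemma~\ref{lem:pushforward}: the set of $F'$-circuits of $f_*(M_\varphi)$ is the collection of all $c'\odot f_*(X)$ with $c'\in (F')^\times$ and $X\in\cC(M_\varphi)$. The previous paragraph shows that as $C$ ranges over the circuits of $\underline M$, the projective class $[f_*(X)]$ coincides with the projective class $[Y]$ of the corresponding $F'$-circuit of $M_{f_*\varphi}$. Since every circuit of either matroid is obtained from a circuit of $\underline M$ by this procedure (axioms (C0)--(C2) make the map from projective circuits to supports a bijection), the two $F'$-circuit sets are equal. I do not anticipate a serious obstacle: the proof is essentially a check that the construction (\ref{eq:CircuitsFromGP}) is functorial in $F$, and the only subtle point is making sure $f$ commutes with multiplicative inversion and with the sign $(-1)^i$, both of which are immediate from the hyperfield homomorphism axioms.
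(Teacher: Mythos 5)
Your proof is correct and follows exactly the route the paper intends: the paper states that the lemma is ``an immediate consequence of~(\ref{eq:CircuitsFromGP})'' and gives no further detail, so what you have done is simply fill in that immediate verification. Your observations that a hyperfield homomorphism has trivial kernel (so $B_\varphi = B_{f_*\varphi}$), that $f(-1)=-1$, and that $f$ commutes with multiplicative inversion are exactly the points needed to push~(\ref{eq:CircuitsFromGP}) through $f$, and your appeal to Lemma~\ref{lem:pushforward} to identify the circuit sets on the nose is the right way to finish.
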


It is also straightforward to check (using either circuits or Grassmann-Pl{\"u}cker functions) that if $M$ is a strong (resp. weak) $F$-matroid and $f : F \to F'$ is a homomorphism of hyperfields
, then the dual strong (resp. weak) $F'$-matroid to $f_*(M)$ is $f_*(M^*)$.  Summarizing our observations in this section, we have:

\begin{corollary}
If $M$ is a strong (resp. weak) $F$-matroid with $F$-circuit set $\cC(M)$ and Grassmann-Pl{\"u}cker function (resp. weak Grassmann-Pl{\"u}cker function) $\varphi$, and $f : F \to F'$ is a homomorphism of hyperfields
, the following coincide:
\begin{enumerate}
\item The strong (resp. weak) $F'$-matroid whose $F'$-circuits are $\{ c' f_*(X) \; : \; c' \in F', \; X \in \cC(M) \}$.
\item The strong (resp. weak) $F'$-matroid whose $F'$-cocircuits are $\{ c' f_*(Y) \; : \; c' \in F', \; Y \in \cC(M^*) \}$.
\item The strong (resp. weak) $F'$-matroid whose Grassmann-Pl{\"u}cker function (resp. weak Grassmann-Pl{\"u}cker function) is $f_* \varphi$.
\end{enumerate}
\end{corollary}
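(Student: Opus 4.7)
The plan is to unpack the corollary as a straightforward consequence of the three preceding observations in the section, namely Lemma~\ref{lem:pushforward} (push-forward defined via circuits), Lemma~\ref{lem:compatibility_of_pushforwards} (push-forward via Grassmann--Pl\"ucker functions agrees with push-forward via circuits), and the parenthetical assertion that $(f_*M)^* = f_*(M^*)$. The equivalence of (1) and (3) is literally Lemma~\ref{lem:compatibility_of_pushforwards}: the $F'$-matroid $f_*(M_\varphi)$ described in (1) via circuits equals $M_{f_*\varphi}$ described in (3) via Grassmann--Pl\"ucker functions. So only the equivalence of these with (2) requires substantive work, and it boils down to verifying that push-forward commutes with duality.

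To establish $(f_*M)^* = f_*(M^*)$, I would work on the Grassmann--Pl\"ucker side using the formula from Theorem~\ref{thm:B},
\[
\varphi^*(x_1,\ldots,x_{m-r}) = \mathrm{sign}(x_1,\ldots,x_{m-r},x_1',\ldots,x_r')\,\overline{\varphi(x_1',\ldots,x_r')}.
\]
Because $f$ is a hyperfield homomorphism it preserves $1$ and, by reversibility applied to $0 \in f(1)\boxplus f(-1)$, sends $-1$ to $-1$, so $f$ commutes with the sign. Assuming (as in the set-up of Theorem~\ref{thm:B}) the involutions on $F$ and $F'$ are compatible via $f$, applying $f$ to both sides yields $f_*(\varphi^*) = (f_*\varphi)^*$. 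Combining this identity with Lemma~\ref{lem:compatibility_of_pushforwards} applied to $M^*$ gives
\[
f_*(M^*) \;=\; M_{f_*(\varphi^*)} \;=\; M_{(f_*\varphi)^*} \;=\; (M_{f_*\varphi})^* \;=\; (f_*M)^*,
\]
which is the desired compatibility.

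With this in hand, (2) reduces to (1) applied to the dual matroid. Indeed, by Lemma~\ref{lem:pushforward} applied to $M^*$, the $F'$-circuits of $f_*(M^*)$ are precisely $\{c'f_*(Y) : c'\in F',\ Y \in \cC(M^*)\}$. Since $f_*(M^*) = (f_*M)^*$, these are by definition the $F'$-cocircuits of $f_*M$, matching (2). Hence (1), (2), and (3) all coincide with the single $F'$-matroid $f_*M$.

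The only genuinely delicate point is the interaction with the involution in the duality formula; once one specifies that the hyperfield homomorphism $f$ is compatible with the chosen involutions (which is implicit whenever the involutions are nontrivial, and automatic when both involutions are the identity, as in the standard examples ${\mathbb K},{\mathbb T},{\mathbb S}$), the argument is entirely formal. For the weak case, one uses the weak Grassmann--Pl\"ucker characterization (Theorem~\ref{thm:Aprime}) and the weak version of duality in Theorem~\ref{thm:B} in place of the strong ones; no additional argument is needed because the Grassmann--Pl\"ucker formula for $\varphi^*$ is the same in both settings.
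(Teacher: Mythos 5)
Your proposal is correct and follows essentially the same route as the paper: the paper simply states this corollary as ``Summarizing our observations in this section, we have'' after establishing Lemma~\ref{lem:pushforward}, Lemma~\ref{lem:compatibility_of_pushforwards}, and noting (without proof) that push-forward commutes with duality. You supply the one detail the paper declares ``straightforward to check,'' namely that $f_*(\varphi^*)=(f_*\varphi)^*$ (using $f(-1)=-1$, which incidentally follows from uniqueness of the hyperinverse (H1) rather than reversibility (H2)), and then chain the three descriptions together via Theorem~\ref{thm:B} exactly as intended.
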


\begin{defn} \label{defn:realizable}
Let $f : F \to F'$ be a homomorphism of hyperfields, and let $M'$ be a strong (resp.~weak) matroid on $E$ with coefficients in $F'$.  We say that $M'$ is {\bf realizable with respect to $f$} if there is a strong (resp.~weak) matroid $M$ over $F$ such that $f_*(M)=M'.$

If $F'={\mathbb K}$ is the Krasner hyperfield, so that $M'$ is a matroid in the usual sense, we say that $M'$ is {\bf strongly realizable over $F$} (resp.~{\bf weakly realizable over $F$}) if there is a strong (resp.~weak) matroid $M$ over $F$ such that $\psi_*(M)=M'$, where $\psi : F \to {\mathbb K}$ is the canonical homomorphism.
\end{defn}

\section{Doubly distributive hyperfields}
\label{sec:perfect}

Although the notions of weak and strong matroids over hyperfields do not coincide in general, they do agree for a special class of hyperfields called {\em doubly distributive} hyperfields. This follows from some results of Dress and Wenzel in \cite{DressWenzelPM}, as we will explain in this section.

We say that a hyperfield $F$ is {\bf doubly distributive} if for any $x$, $y$, $z$ and $t$ in $F$ we have $(x \boxplus y) (z \boxplus t) = xz \boxplus xt \boxplus yz \boxplus yt$. It follows that $$\left(\bigboxplus_{i \in I} x_i  \right)\left(\bigboxplus_{j \in J} y_j \right) = \bigboxplus_{\substack{i \in I \\ j \in J}} x_i y_j$$ for any finite families $(x_i)_{i \in I}$ and $(y_j)_{j \in J}$. Not all hyperfields have this property. For example, the triangle and phase hyperfields do not, whereas the Krasner, sign and tropical hyperfields do.

Dress and Wenzel do not work directly with hyperfields, but rather with objects called fuzzy rings. However, it is not difficult to build a fuzzy ring from a doubly distributive hyperfield $F$. Let $K(F)$ be the free commutative monoid on the set of nonzero elements of $F$. We shall write elements additively, as formal sums of elements of $F^{\times}$. We can define a formal multiplication on $K(F)$ by 
$$\left(\sum_{i \in I} x_i\right)\cdot\left(\sum_{j \in J} y_j\right) := \sum_{\substack{i \in I \\ j \in J}} x_i y_j$$
and we take $K_0(F)$ to be the subset $\{\sum_{i \in I}x_i | 0 \in \bigboxplus_{i \in I} x_i\}$ of $K(F)$. Then it is straightforward to check that $(K(F); +; \cdot; -1; K_0(F))$ is a fuzzy ring in the sense of \cite{DressWenzelPM}\footnote{We do not repeat the definition here, since it is rather technical and the details are not important for our purposes.}. Indeed, this fuzzy ring is a fuzzy integral domain, in the sense that for $\kappa, \lambda \in K(F)$ with $\kappa \cdot \lambda \in K_0(F)$ we have either $\kappa \in K_0(F)$ or $\lambda \in K_0(F)$, and is distributive in the sense that $\kappa \cdot (\lambda_1 + \lambda_2) = \kappa \cdot \lambda_1 + \kappa \cdot \lambda_2$.

Furthermore, if we have a strong matroid $M$ over $F$ on a set $E$ with $F$-circuit set $\mathcal C$, then $\mathcal C$ presents a matroid with coefficients in $K(F)$ in the sense of \cite{DressWenzelPM}. This is not completely obvious: in order to prove it, we must analyze a key operation from that paper. Let $r, s \in K(F)^E$ and let $f \in E$. Then we define $r \wedge_f s \in K(F)^E$ by $$(r \wedge_f s)(e) := \begin{cases}0 & \text{if } e = f \\ s(f) \cdot r(e) + (-1) \cdot r(f) \cdot s(e) & \text{if } e \neq f.\end{cases}$$
For $r, s \in K(F)^E$ we write $r \bot s$ to mean $\sum_{e \in E} r(e) \cdot s(e) \in K_0(F)$. Let $\mathcal C^*$ be the set of $F$-cocircuits of $M$. We say that $r \in K(F)^E$ is a {\bf fuzzy vector} of $M$ if $r \bot Y$ for any $Y \in \mathcal C^*$. It follows from Lemma 2.4(i) of \cite{DressWenzelPM} that if $r$ and $s$ are fuzzy vectors then so is $r \wedge_f s$ for any $f \in E$. Since all elements of $\mathcal C$ are fuzzy vectors, the following lemma suffices to establish that $\mathcal C$ presents a matroid with coefficients in $K(F)$:\footnote{We once more omit the definition.}

\begin{lemma} \label{lem:fuzzypresents}
Let $r$ be a fuzzy vector of $M$ and choose $e \in E$ with $r(e) \not \in K_0(F)$. Then there is some $X \in \mathcal C$ with $e \in \underline X \subseteq \underline r$.
\end{lemma}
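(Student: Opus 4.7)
The plan is to prove the contrapositive: assuming there is no $F$-circuit $X$ with $e \in \underline X \subseteq \underline r$, I will deduce $r(e) \in K_0(F)$, contradicting the hypothesis. Because projective $F$-circuits of $M$ correspond bijectively (via support) to circuits of the underlying matroid $\underline M$, the assumption translates to a purely combinatorial condition: no circuit of $\underline M$ through $e$ lies in $\underline r$. This is equivalent to $e \notin \mathrm{cl}_{\underline M}(\underline r \setminus \{e\})$ (note that $e \in \underline r$ since $r(e) \notin K_0(F)$ implies $r(e) \neq 0$).

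By the standard hyperplane/cocircuit duality, $e \notin \mathrm{cl}_{\underline M}(\underline r \setminus \{e\})$ is in turn equivalent to the existence of a cocircuit $C^*$ of $\underline M$ with $e \in C^*$ and $C^* \cap \underline r = \{e\}$. Concretely, one chooses a maximal independent subset $B_r$ of $\underline r \setminus \{e\}$ in $\underline M$; since $e \notin \mathrm{cl}(\underline r \setminus \{e\})$, the set $B_r \cup \{e\}$ is still independent, so extend it to a basis $B$ of $\underline M$ and take $C^*$ to be the fundamental cocircuit of $e$ with respect to $B$. By the analogous bijection between cocircuits of $\underline M$ and projective $F$-cocircuits of $M$, lift $C^*$ to an element $Y \in \mathcal C^*$ with $\underline Y = C^*$.

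Now I invoke orthogonality. Since $r$ is a fuzzy vector of $M$, we have $\sum_{f \in E} r(f) \cdot Y(f) \in K_0(F)$. The summands outside of $C^* \cap \underline r = \{e\}$ all vanish: for $f \notin C^*$ we have $Y(f) = 0$, while for $f \in C^* \setminus \{e\}$ we have $f \notin \underline r$ and hence $r(f) = 0$. The sum therefore collapses to $r(e) \cdot Y(e) \in K_0(F)$. Because $K(F)$ is a fuzzy integral domain (as recorded in the excerpt), this forces $r(e) \in K_0(F)$ or $Y(e) \in K_0(F)$. However, $Y(e) \in F^\times$ is a one-term formal sum whose underlying hypersum is $\{Y(e)\} \not\ni 0$, so $Y(e) \notin K_0(F)$. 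Therefore $r(e) \in K_0(F)$, the desired contradiction.

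The main obstacle is the matroid-theoretic step of locating the cocircuit $C^*$ that meets $\underline r$ only at $e$; once this is in hand, the collapse of the orthogonality sum and the appeal to the fuzzy integral domain property of $K(F)$ are essentially automatic.
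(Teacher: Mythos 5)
Your proposal is correct and is essentially the contrapositive of the paper's argument. The paper observes directly that orthogonality $r \perp Y$ (plus the fuzzy integral domain property and $r(e)\notin K_0(F)$) rules out any cocircuit $Y$ with $\underline r \cap \underline Y = \{e\}$, hence $e$ is not a coloop of $\underline M|\underline r$ and so lies on a circuit of $\underline M$ inside $\underline r$; you run the same facts in the opposite direction, assuming no such circuit exists, producing a cocircuit meeting $\underline r$ only at $e$, and then deriving the contradiction from orthogonality and the integral-domain property.
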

\begin{proof}
For any $Y \in \mathcal C^*$ we have $\underline r \cap \underline Y \neq \{e\}$ since $r \bot Y$, so there is no cocircuit of $\underline M$ which meets $\underline r$ only in $\{e\}$. Thus $e$ is not a coloop of $\underline M | \underline r$, and so there is some circuit $C$ of $\underline M | \underline r$ containing $e$. It suffices to take $X$ to be any element of $\mathcal C$ with $\underline X = C$.
\end{proof}

Furthermore, $\mathcal C^*$ presents the dual matroid with coefficients to the one presented by $\mathcal C$.

Now we can apply Theorem 2.7 of \cite{DressWenzelPM} to obtain:

\begin{theorem}
Let $M$ be a strong matroid over a doubly distributive hyperfield $F$, let $r$ be a fuzzy vector of $M$ and let $s$ be a fuzzy vector of $M^*$. Then $r \bot s$.
\end{theorem}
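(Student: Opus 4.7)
The plan is essentially to invoke Theorem 2.7 of \cite{DressWenzelPM} and verify that the translation between the hyperfield and fuzzy-ring formalisms is the one the preceding discussion has already prepared. All of the structural setup is in place: we have built the fuzzy ring $K(F)$ attached to the doubly distributive hyperfield $F$, noted that $K(F)$ is a distributive fuzzy integral domain, and shown via Lemma~\ref{lem:fuzzypresents} that $\mathcal C$ presents a matroid $\mathbf M$ with coefficients in $K(F)$; the same lemma applied to $M^*$ shows that $\mathcal C^*$ presents $\mathbf M^*$, and $\mathbf M^*$ is the dual matroid with coefficients to $\mathbf M$.

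The next step is to match up definitions. The notion of a fuzzy vector of $M$, namely an element $r \in K(F)^E$ with $r \bot Y$ for every $Y \in \mathcal C^*$, coincides with the notion of a vector of $\mathbf M$ in the sense of \cite{DressWenzelPM} (defined by orthogonality to every cocircuit), and symmetrically for $M^*$. The pairing $r \bot s$, i.e. $\sum_{e \in E} r(e)\cdot s(e) \in K_0(F)$, is exactly the Dress--Wenzel orthogonality relation on $K(F)^E$.

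With these identifications, Theorem 2.7 of \cite{DressWenzelPM} applies: it asserts that under the hypotheses on $K(F)$ already verified (fuzzy integral domain, distributive), every vector of a matroid with coefficients is orthogonal to every vector of the dual matroid. Applied to $r$ (a vector of $\mathbf M$) and $s$ (a vector of $\mathbf M^*$), this immediately yields $r \bot s$, which is the desired conclusion.

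The only genuine obstacle is a bookkeeping one: making sure that every clause in the Dress--Wenzel axioms for a fuzzy ring, a matroid with coefficients, and the dual matroid with coefficients is literally satisfied by the translated objects $K(F)$, $\mathbf M$ and $\mathbf M^*$. The double distributivity of $F$ is what makes the formal product on $K(F)$ interact with $K_0(F)$ in the way the fuzzy-ring axioms demand, and Lemma~\ref{lem:fuzzypresents} is what allows $\mathcal C$ and $\mathcal C^*$ to serve as presenting sets. Once these routine checks are completed, no additional argument is needed beyond citing \cite{DressWenzelPM}.
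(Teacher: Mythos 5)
Your proposal is correct and follows essentially the same route as the paper, which after the preparatory construction of $K(F)$, the verification that it is a distributive fuzzy integral domain, and Lemma~\ref{lem:fuzzypresents} showing that $\mathcal C$ (and dually $\mathcal C^*$) present a matroid with coefficients and its dual, simply invokes Theorem 2.7 of \cite{DressWenzelPM}. The paper states no proof beyond that citation, so your account---matching the fuzzy-vector and orthogonality notions with the Dress--Wenzel ones and then quoting their theorem---is exactly the intended argument.
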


This has an important consequence which can be expressed without reference to the fuzzy ring $K(F)$. Given a strong matroid $M$ over a hyperfield $F$ on a set $E$ with $F$-circuit set $\mathcal C$ and $F$-cocircuit set $\mathcal C^*$, a {\bf vector} of $M$ is an element of $F^E$ which is orthogonal to everything in $\Ccal^*$. Similarly a {\bf covector} of $M$ is an element of $F^E$ which is orthogonal to everything in $\Ccal$. We say that $F$ is perfect if, for any strong matroid $M$ over $F$, all vectors are orthogonal to all covectors.

\begin{cor}
Any doubly distributive hyperfield is perfect.
\end{cor}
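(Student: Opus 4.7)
The plan is to reduce the corollary to the preceding theorem by showing that an ordinary vector (respectively covector) of $M$ gives rise to a fuzzy vector of $M$ (respectively $M^*$) under the natural embedding $F^E \hookrightarrow K(F)^E$.

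First I would observe that if $w \in F^E$ is a covector of $M$, then $w$ is orthogonal to every element of $\mathcal C$, hence is a vector of the dual matroid $M^*$, since the cocircuits of $M^*$ are the circuits of $M$. So it suffices to show that if $v$ is a vector of $M$ and $w$ is a vector of $M^*$, then $v \perp w$. The key translation step is to embed $F$ into the fuzzy ring $K(F)$ by sending each $x \in F^\times$ to the formal singleton sum $x$ (and $0$ to $0$); applied coordinatewise this gives inclusions $F^E \hookrightarrow K(F)^E$.

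Next I would verify that under this embedding the two notions of orthogonality agree: for $a,b \in F^E$, the fuzzy-ring pairing $\sum_{e \in E} a(e) \cdot b(e)$ lies in $K_0(F)$ if and only if $0 \in \bigboxplus_{e \in E} a(e) b(e)$, which is immediate from the definitions of multiplication in $K(F)$ and of $K_0(F)$. Consequently, if $v$ is a vector of $M$ then for every $Y \in \mathcal C^*$ the image of $v$ in $K(F)^E$ pairs into $K_0(F)$ against $Y$ (after folding the involution into $Y$ as $\tilde Y(e) := \overline{Y(e)}$, which is harmless because the involution is a hyperring automorphism and hence sends $\mathcal C^*$ to another valid $F$-cocircuit set). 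Thus $v$ is a fuzzy vector of $M$; symmetrically, $w$ is a fuzzy vector of $M^*$. Applying the preceding theorem yields $\sum_{e} v(e) \cdot w(e) \in K_0(F)$, and translating this back across the same equivalence gives $0 \in \bigboxplus_{e} v(e) \overline{w(e)}$, i.e.\ $v \perp w$ in $F^E$.

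I do not expect any real obstacle beyond bookkeeping: the theorem of Dress--Wenzel imported above does all the hard work, and the remaining content is the observation that the fuzzy-ring inner product on the image of $F^E$ in $K(F)^E$ is just a repackaging of the hyperfield inner product, so ``genuine'' vectors and covectors are automatically fuzzy vectors. The only point requiring care is the compatibility with the involution $x \mapsto \overline x$ appearing in the hyperfield pairing but absent from the fuzzy-ring pairing; this is handled by replacing $\mathcal C^*$ with its coordinatewise image $\widetilde{\mathcal C^*}$ under the involution, which is again a legal $F$-cocircuit set. In the doubly distributive examples of primary interest ($\mathbb K$, $\mathbb S$, $\mathbb T$) the involution is the identity and this step is vacuous.
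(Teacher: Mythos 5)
Your reduction is exactly the one the paper intends; the corollary is stated without a separate argument precisely because, once Dress--Wenzel's theorem is in hand, the only remaining content is the translation you carry out: the inclusion $F^E \hookrightarrow K(F)^E$ identifies $0 \in \bigboxplus_e a(e) b(e)$ with $\sum_e a(e)\cdot b(e) \in K_0(F)$, so hyperfield vectors and covectors map to fuzzy vectors of $M$ and of $M^*$. Your opening remark that covectors of $M$ are vectors of $M^*$ is a clean way to line things up with the hypotheses of the theorem.

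The one step that does not close as written is the handling of the involution. After folding $\tau$ into $Y$, what you have actually shown is that $v$ is fuzzy-orthogonal to every element of $\widetilde{\mathcal C^*} := \{\overline{Y} : Y \in \mathcal C^*\}$. But $\widetilde{\mathcal C^*}$ is the $F$-cocircuit set of the \emph{twisted} matroid $\widetilde M$ whose $F$-circuit set is $\overline{\mathcal C}$, not of $M$ itself, so ``thus $v$ is a fuzzy vector of $M$'' does not follow. What does follow is that $v$ is a fuzzy vector of $\widetilde M$ and $w$ a fuzzy vector of $\widetilde M^*$; applying the theorem to $\widetilde M$ then yields $\sum_e v(e)w(e) \in K_0(F)$, i.e.\ $0 \in \bigboxplus_e v(e) w(e)$ --- with no bar on $w(e)$. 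That is orthogonality of $v$ against $\overline w$, not against $w$; the bar in your displayed conclusion $0 \in \bigboxplus_e v(e)\overline{w(e)}$ is not produced by the translation, so the last step of your chain is false as stated. The repair is easy but has to be said: because $\mathcal C^*$ is itself defined via the $\tau$-twisted pairing, the set of $\tau$-covectors of $M$ is exactly the $\tau$-image of the set of covectors with respect to the identity involution, so ``every vector is $\tau$-orthogonal to every $\tau$-covector'' and ``every vector is plainly orthogonal to every plain covector'' are the same assertion, and the statement you actually proved suffices. Alternatively --- and this is closer to what the paper tacitly does, cf.\ the footnote opening Section \ref{sec:proofsection} and the paper's unproved claim that $\mathcal C^*$ presents the dual matroid with coefficients to $\mathcal C$, which already presupposes the trivial pairing --- one can simply take $\tau = \mathrm{id}$ from the outset and avoid the twist entirely.
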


We are now in a position to show that the notions of strong and weak matroids coincide for doubly distributive hyperfields. In fact, this is true for perfect hyperfields in general. This follows from Theorem 3.4 of \cite{DressWenzelPM}, but we present our own proof here.


We will also need to consider, for each natural number $k$, the following weakening of (DP3):
\begin{itemize}
\item[(DP3)$_{k}$] $X \perp Y$ for every pair $X \in \Ccal$ and $Y \in \Dcal$ with $|\underline X \cap \underline Y \leq k|$.
\end{itemize}
So (DP3)$_3$ is just ${\rm (DP3)}'$, and (DP3) is equivalent to the conjunction of all the (DP3)$_k$.

\begin{theorem}
Any weak matroid $M$ over a perfect hyperfield $F$ is strong.
\end{theorem}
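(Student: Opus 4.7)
By Theorem~\ref{thm:Cprime}, the $F$-circuit set $\Ccal$ and $F$-cocircuit set $\Dcal$ of $M$ form a weak dual pair, satisfying the weak orthogonality axiom (DP3)$'$ (equivalently, (DP3)$_3$). By Theorem~\ref{thm:C}, showing that $M$ is a strong $F$-matroid is equivalent to upgrading this to the full axiom (DP3), i.e., $X \perp Y$ for every $X \in \Ccal$ and $Y \in \Dcal$. The plan is to prove that (DP3)$_k$ holds for every $k$, by induction on $k$, with base case $k=3$ given by hypothesis.

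For the inductive step, fix $X \in \Ccal$ and $Y \in \Dcal$ with $|\underline X \cap \underline Y| = k \geq 4$, and assume (DP3)$_{k-1}$ holds. The goal is to produce a strong $F$-matroid $N$---built from $M$, most naturally as a minor---for which $X$ appears as a vector and $Y$ as a covector; then perfectness of $F$ applied to $N$ yields $X \perp Y$, completing the step. To guarantee that $N$ is strong, I would run a second (outer) induction on $|E|$: when $|E|$ is so small that every pair of $F$-circuits and $F$-cocircuits has intersection at most $3$, $M$ is already strong by (DP3)$'$; for larger $|E|$, minors of $M$ are weak $F$-matroids on smaller ground sets by Theorem~\ref{thm:D}, hence strong by the outer induction, so that perfectness of $F$ applies to them directly.

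The main obstacle is that the minor operations do not cleanly preserve both $F$-circuits and $F$-cocircuits: deletion preserves $F$-circuits whose supports avoid the deleted set but can shrink or destroy $F$-cocircuits, while contraction is dual. So each step of the inner induction requires a careful case analysis based on the positions of elements relative to $\underline X$ and $\underline Y$, together with auxiliary applications of modular elimination (axiom (C3)$'$ and its cocircuit analogue) in order to produce new $F$-circuits or $F$-cocircuits with smaller intersection with the opposing support. The most delicate situation is when $\underline X \cup \underline Y = E$, since then no minor shrinks the ground set while preserving both $X$ and $Y$; here I anticipate invoking the Grassmann--Pl{\"u}cker description (Theorem~\ref{thm:A}) to express $X$ and $Y$ as elements arising from the Grassmann--Pl{\"u}cker function $\varphi$ of $M$, and then applying perfectness to a strong auxiliary matroid built from $\varphi$ restricted to a suitable family of bases.
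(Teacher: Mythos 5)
Your high-level outline matches the paper: induct on $k$ starting from (DP3)$_3 = {\rm (DP3)}'$, reduce to a minor $N$ in which $X$ and $Y$ appear as vector/covector of a strong $F$-matroid, and invoke perfectness of $F$. However, the mechanism you propose for certifying that $N$ is strong has a genuine gap, and your fallback for the ``delicate'' case is speculative and not worked out.

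The key step you are missing is the following reduction. The paper first normalizes by contracting $\underline X \setminus \underline Y$ and deleting $\underline Y \setminus \underline X$ so that $\underline X = \underline Y$ (and then further contracts/deletes so that $\underline X$ is both spanning and cospanning, forcing the reduced matroid to have $2k-2$ elements, which for $k \geq 4$ leaves at least two elements outside $\underline X$, none of which are loops or coloops). It then forms $N$ by contracting at least one and deleting at least one of these outside elements, landing on ground set $\underline X$. This guarantees that $\underline X$ is \emph{neither a circuit nor a cocircuit} of $\underline N$, and therefore every circuit and every cocircuit of $\underline N$ has at most $k-1$ elements. Consequently (DP3)$_{k-1}$ for $N$ already coincides with full (DP3) for $N$: the weak $F$-matroid $N$ is automatically strong, and perfectness applies. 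This is exactly what removes the difficulty you were trying to address with a second, outer induction on $|E|$.

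Two further remarks. First, the paper's inductive hypothesis is quantified over \emph{all} weak $F$-matroids (``every weak $F$-matroid satisfies (DP3)$_{k-1}$''), not just the fixed $M$; since minors of weak $F$-matroids are again weak $F$-matroids by Theorem~\ref{thm:D}, this is what lets the hypothesis apply directly to $N$ without an outer induction on $|E|$. Second, your concern about the case $\underline X \cup \underline Y = E$ and the proposed Grassmann--Pl{\"u}cker workaround are both moot: after the normalization to $\underline X = \underline Y$, having the ground set equal to $\underline X$ would force $\underline X$ to be simultaneously a circuit and cocircuit of a matroid on $k$ elements, giving $\mathrm{rank} + \mathrm{corank} = 2(k-1) = k$, i.e.\ $k = 2$, contradicting $k \geq 4$. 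As written, your proposal does not close this gap, so it does not yet constitute a proof.
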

\begin{proof}
We will show by induction on $k$ that any weak $F$-matroid satisfies (DP3)$_k$ for all $k \geq 3$. The base case $k = 3$ is true by definition. So let $k > 3$ and suppose that every weak $F$-matroid satisfies (DP3)$_{k-1}$. Let $M$ be a weak $F$-matroid, and choose $X \in \Ccal$ and $Y \in \Dcal$ with $|\underline X \cap \underline Y| \leq k$. We must show that $X \perp Y$. This follows from (DP3)$_{k-1}$ if $|\underline X \cap \underline Y| \leq k-1$, so we may suppose that $|\underline X \cap \underline Y| = k$.

By contracting $\underline X \setminus \underline Y$ and deleting $\underline Y \setminus \underline X$ if necessary\footnote{These operations were introduced in Subsection \ref{sec:minors}.}, we may assume without loss of generality that $\underline X = \underline Y$. By contracting a basis of 
{$\underline{M}/\underline X$} if necessary, 
we may assume without loss of generality that $\underline X$ is spanning in {$\underline{M}$}. Similarly we may assume without loss of generality that $\underline Y$ is cospanning in {$\underline{M}$}. So the rank and the corank of $M$ are both $k-1$, which means that $M$ has $2k-2$ elements. So $M$ has at least $k-2 \geq 2$ elements outside $\underline X$. None of these elements can be coloops (since $\underline X$ is spanning) or loops (since it is cospanning). Let $N$ be a minor of $M$ with ground set $\underline X$, in which at least one of the edges outside $\underline X$ has been contracted and at least one has been deleted. This ensures that $\underline X$ is not a circuit of $\underline N$, and dually it also ensures that $\underline X$ is not a cocircuit of $\underline N$.

For any cocircuit $W$ of $N$ there is some cocircuit $\hat W$ of $M$ with $W = \hat W \restric_{\underline X}$. Then $|\underline X \cap \underline{\hat W}| = |\underline W| \leq k-1$, so $X \perp \hat W$, from which it follows that $X \restric_{\underline X} \perp W$. So $X \restric_{\underline X}$ is a vector of $N$. Similarly $Y\restric_{\underline X}$ is a covector of $N$. Any intersection of a circuit with a cocircuit of $\underline N$ has at most $k-1$ elements. Since $N$ is a weak $F$-matroid, it satisfies (DP3)$_{k-1}$ by the induction hypothesis, and so it is in fact a strong $F$-matroid. Since $F$ is perfect, it follows that $X \restric_{\underline X} \perp Y \restric_{\underline X}$ and so $X \perp Y$, as required.

We have now shown that any weak matroid over $F$ satisfies (DP3)$_k$ for all $k$, and so is strong.
\end{proof}

\section{Proofs}
\label{sec:proofsection}

In this section, we provide proofs of the main theorems of the paper.  We closely follow the arguments of Anderson--Delucchi from
\cite{AndersonDelucchi}; when the proof is a straightforward modification of a corresponding result in {\em loc.~cit.}, we sometimes omit details.

In order to simplify the notation, we assume throughout this section that the involution $\tau: x \mapsto \overline{x}$ is trivial.\footnote{{ This is in fact a harmless assumption, since one can deduce the general case of the theorems in \S\ref{sec:Duality} and \ref{sec:minors} from this special one.
To see this, first suppose we have proved Theorem~\ref{thm:B} in the special case $\tau = {\rm id}$.  Then Theorem~\ref{thm:B} for $(M,\tau)$ follows from the special case $(\overline{M},{\rm id})$, where $\overline{M}$ is the matroid whose $F$-circuits are obtained by replacing each $F$-circuit $C$ of $M$ with its image $\overline{C}$ under $\tau$.  The other theorems in \S\ref{sec:Duality} and \ref{sec:minors} follow similarly.}}

\subsection{Weak Grassmann-Pl{\"u}cker functions and Duality}

Given a weak Grassmann-Pl{\"u}cker function $\varphi$ of rank $r$ on the ground set $E$, we set 
\[
{\mathbf B}_\varphi := \{ \{ b_1,\ldots,b_r \} \; | \; \varphi(b_1,\ldots,b_r) \neq 0 \}.
\]
Recall that this is the set of bases of a matroid of rank $r$, which we denote by $\underline{M}_\varphi$ (rather than the typographically more awkward $\underline{M_\varphi}$) and call the {\bf underlying matroid} of $\varphi$.

In what follows, we fix a total order on $E$. Let $|E| = m$.

\begin{defn}
Let $\varphi$ be a rank $r$ weak Grassmann-Pl{\"u}cker function on $E$, and for every ordered tuple $(x_1,x_2,\ldots,x_{m-r}) \in E^{m-r}$ let 
$x_1',\ldots,x_r'$ be an ordering of $E \backslash \{ x_1,x_2,\ldots,x_{m-r} \}$.  Define the {\bf dual weak Grassmann-Pl{\"u}cker function} $\varphi^*$ by 
\[
\varphi^*(x_1,\ldots,x_{m-r}) := {\rm sign}(x_1,\ldots,x_{m-r},x_1',\ldots,x_r') \noinvolution{\varphi(x_1',\ldots,x_r')}. 
\]
\end{defn}

Note that, up to a global change in sign, $\varphi^*$ is independent of the choice of ordering of $E$.

\begin{lemma} \label{lem:Lemma3.2}
$\varphi^*$ is a rank $(m-r)$ weak Grassmann-Pl{\"u}cker function, and the underlying matroid $\underline{M}_{\varphi^*}$ is the matroid dual of $\underline{M}_\varphi$. If $\varphi$ is  a Grassmann-Pl\"ucker function then so is $\varphi^*$.
\end{lemma}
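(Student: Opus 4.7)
My approach is to verify the defining properties of a weak Grassmann--Pl\"ucker function in turn, concentrating the real work in the axiom ${\rm (GP3)}'$.

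Properties (GP1) and (GP2) are nearly formal. For (GP1), if $\{x_1',\ldots,x_r'\}$ is any basis of $\underline{M}_\varphi$, then $\varphi^*$ evaluated on an ordering of $E \setminus \{x_1',\ldots,x_r'\}$ is a unit multiple of $\varphi(x_1',\ldots,x_r') \neq 0$. For (GP2), transposing two entries of $(x_1,\ldots,x_{m-r})$ flips the sign of the permutation $(x_1,\ldots,x_{m-r},x_1',\ldots,x_r')$ while leaving $\varphi(x_1',\ldots,x_r')$ fixed, so $\varphi^*$ is alternating; on tuples with a repeated entry no complementary ordering of size $r$ exists and one sets $\varphi^* = 0$. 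For the identification of the underlying matroid, the support of $\varphi^*$ consists of exactly those $(m-r)$-subsets whose complement is a basis of $\underline{M}_\varphi$, and these are by definition the bases of $\underline{M}_\varphi^*$.

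The main task is the Grassmann--Pl\"ucker relations. Fix $I \subseteq E$ with $|I|=m-r+1$ and $J \subseteq E$ with $|J|=m-r-1$, and set $I' = E \setminus I$, $J' = E \setminus J$, of sizes $r-1$ and $r+1$ respectively. Substituting the definition of $\varphi^*$ into the putative relation for $\varphi^*$ indexed by $(I,J)$ and using the identities $E \setminus (I \setminus y) = I' \cup \{y\}$ and $E \setminus (J \cup y) = J' \setminus \{y\}$ transforms it, up to a global nonzero scalar (which can be absorbed by the distributive law $\alpha \odot (A \boxplus B) = \alpha A \boxplus \alpha B$), into the Grassmann--Pl\"ucker relation for $\varphi$ on the pair $(J',I')$. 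The key observation is that $|J' \setminus I'| = |I \setminus J|$, so three-term relations for $\varphi^*$ correspond bijectively to three-term relations for $\varphi$; this yields ${\rm (GP3)}'$ for $\varphi^*$ from ${\rm (GP3)}'$ for $\varphi$. The same correspondence at arbitrary values of $|I \setminus J|$ yields the strong case, proving the final sentence of the lemma.

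The main obstacle is the sign bookkeeping in the step above. For each $y_k \in I$, one must verify that the product of $(-1)^k$ (the sign from the putative relation for $\varphi^*$) with the two sign factors coming from the definitions of $\varphi^*(I \setminus y_k)$ and $\varphi^*(J \cup y_k)$ collapses, modulo an overall factor independent of $k$, to precisely the sign appearing at the index $y_k \in J'$ in the Grassmann--Pl\"ucker relation for $\varphi$ on $(J',I')$. I plan to carry out this verification by fixing a total order on $E$ once and for all, expressing each sign as the parity of a count of inversions, and comparing directly. This is a purely combinatorial computation, familiar from the classical field case (see \cite{KleimanLaksov}), and it carries over unchanged to the hyperfield setting because the only hyperfield operation it requires is multiplication by units, which distributes over the hypersum.
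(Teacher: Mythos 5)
Your proposal is correct and takes essentially the same approach as the paper, which defers to Anderson--Delucchi's proof of the phased-matroid case (their Lemma 3.2): one checks that complementation carries the Grassmann--Pl\"ucker relation for $\varphi^*$ on $(I,J)$ to the relation for $\varphi$ on $(E\setminus J, E\setminus I)$ up to a uniform sign, with the equality $|(E\setminus J)\setminus(E\setminus I)| = |I\setminus J|$ guaranteeing that three-term relations go to three-term relations. You leave the sign bookkeeping as an outline, but the strategy is standard and sound.
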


\begin{proof}
The fact that ${\mathbf B}_{\varphi^*}$ is the set of bases for  $\underline{M}_\varphi^*$ follows from \cite[Theorem A.5]{AndersonDelucchi} as in the proof of
\cite[Lemma 3.2]{AndersonDelucchi}.  To see that $\varphi^*$ is a rank $(m-r)$ weak Grassmann-Pl{\"u}cker function (resp. Grassmann-Pl{\"u}cker function), it suffices to prove (GP3)$'$ (resp. (GP3)) since (GP1) and (GP2) are clear.  This also follows from \cite[Proof of Lemma 3.2]{AndersonDelucchi}.
\end{proof}

\subsection{Weak Grassmann-Pl{\"u}cker functions, Contraction, and Deletion}

Let $\varphi$ be a rank $r$ weak Grassmann-Pl{\"u}cker function on $E$, and let $A \subset E$.  

\begin{defn}
\begin{enumerate}
\item (Contraction)
Let $\ell$ be the rank of $A$ in $\underline M_{\varphi}$, and let $\{a_1,a_2,\ldots,a_\ell \}$ be a maximal $\varphi$-independent subset of $A$.  Define $\varphi / A : (E \backslash A)^{r - \ell} \to F$ by
\[
(\varphi / A)(x_1,\ldots,x_{r - \ell}) := \varphi(x_1,\ldots,x_{r - \ell},a_1,\ldots,a_\ell).
\]
\item (Deletion)
Let $k$ be the rank of $E \backslash A$ in $\underline M_{\varphi}$, and choose $a_1,\ldots,a_{r-k} \subseteq A$ such that $\{ a_1,\ldots,a_{r-k} \}$ is a basis of $\underline M_{\varphi}/(E\setminus A)$.  Define $\varphi \backslash A : (E \backslash A)^{k} \to F$ by
\[
(\varphi \backslash A)(x_1,\ldots,x_k) := \varphi(x_1,\ldots,x_k,a_1,\ldots,a_{r-k}).
\]
\end{enumerate}
\end{defn}

The proof of the following lemma is the same as the proofs of Lemmas 3.3 and 3.4 of  \cite{AndersonDelucchi}:

\begin{lemma} \label{lem:MinorLemma}
\begin{enumerate}
\item Both $\varphi / A$ and $\varphi \backslash A$ are weak Grassmann-Pl{\"u}cker functions, and they are Grassmann-Pl\"ucker functions if $\varphi$ is. Their definitions are independent of all choices up to global multiplication by a nonzero element of $F$.  
\item $\underline{M}_{\varphi / A} = \underline{M}_{\varphi} / A$ and $\underline{M}_{\varphi \backslash A} = \underline{M}_{\varphi} \backslash A$.
\item $(\varphi \backslash A)^* = \varphi^* / A$.
\end{enumerate}
\end{lemma}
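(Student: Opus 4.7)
The plan is to follow the strategy of Lemmas 3.3 and 3.4 of \cite{AndersonDelucchi} essentially verbatim, checking that only the weak (resp.~strong) Grassmann--Pl\"ucker relations for $\varphi$ are ever invoked, and these hold by hypothesis. I would handle (1) and (2) in parallel for contraction, adapt the argument to deletion, and then deduce (3) by an explicit sign computation. Throughout, I fix a maximal $\varphi$-independent subset $\{a_1,\ldots,a_\ell\}$ of $A$ (for contraction) or a subset $\{a_1,\ldots,a_{r-k}\}\subseteq A$ extending a basis of $\underline{M}_\varphi|_{E\setminus A}$ to a basis of $\underline{M}_\varphi$ (for deletion).

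First I would prove part (2). By the definitions, $(\varphi/A)(x_1,\ldots,x_{r-\ell})\ne 0$ iff $\{x_1,\ldots,x_{r-\ell},a_1,\ldots,a_\ell\}$ is a basis of $\underline{M}_\varphi$; since $\{a_1,\ldots,a_\ell\}$ is a basis of $\underline{M}_\varphi|_A$, this is equivalent to $\{x_1,\ldots,x_{r-\ell}\}$ being a basis of $\underline{M}_\varphi/A$. The deletion case is analogous: $(\varphi\backslash A)(x_1,\ldots,x_k)\ne 0$ iff $\{x_1,\ldots,x_k\}$ is a basis of $\underline{M}_\varphi|_{E\setminus A}=\underline{M}_\varphi\backslash A$. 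This also gives (GP1), and (GP2) is immediate since $\varphi$ is alternating.

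The key step is verifying (GP3) (resp.~(GP3)$'$) for $\varphi/A$. For any $\{x_1,\ldots,x_{r-\ell+1}\}$ and $\{y_1,\ldots,y_{r-\ell-1}\}$ in $E\setminus A$, I would apply (GP3) of $\varphi$ to the tuples $(x_1,\ldots,x_{r-\ell+1},a_1,\ldots,a_\ell)$ and $(y_1,\ldots,y_{r-\ell-1},a_1,\ldots,a_\ell)$. In the resulting $(r+1)$-term hypersum the indices $k > r-\ell+1$ correspond to terms with a factor $\varphi(a_i,y_1,\ldots,y_{r-\ell-1},a_1,\ldots,a_\ell)$, which vanishes by (GP2) because $a_i$ is repeated. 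The remaining $(r-\ell+1)$ terms are exactly the summands of the (GP3) relation for $\varphi/A$ applied to $(x_1,\ldots,x_{r-\ell+1})$ and $(y_1,\ldots,y_{r-\ell-1})$. In the weak case the constraint $|I\setminus J|=3$ passes through unchanged. Deletion is handled by the same trick applied to the tuples $(x_1,\ldots,x_{k+1},a_1,\ldots,a_{r-k})$ and $(y_1,\ldots,y_{k-1},a_1,\ldots,a_{r-k})$.

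Well-definedness up to a global scalar: if $\{a_1,\ldots,a_\ell\}$ and $\{a_1',\ldots,a_\ell'\}$ are two bases of $\underline{M}_\varphi|_A$, by the symmetric basis-exchange axiom they are joined by a sequence of single-element swaps, so it suffices to treat a single swap $a_i\leftrightarrow a_i'$. For that, I would apply a 3-term instance of (GP3) to $\varphi$ (available in both the strong and weak cases) to produce an identity of the form $\alpha\cdot\varphi(\cdots,a_i,\cdots) = \beta\cdot\varphi(\cdots,a_i',\cdots)$ with $\alpha,\beta\in F^\times$ independent of the $x_j$, giving the desired scaling. Deletion is treated symmetrically.

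Finally, for part (3), I would fix an ordering of $E$ compatible with the partition $A\sqcup(E\setminus A)$ and compute both sides. Writing out $(\varphi\backslash A)^*(x_1,\ldots,x_{|E\setminus A|-k})$ as $\mathrm{sign}(x_1,\ldots,x_{|E\setminus A|-k},y_1,\ldots,y_k)\cdot\varphi(y_1,\ldots,y_k,a_1,\ldots,a_{r-k})$, where $\{y_1,\ldots,y_k\}=(E\setminus A)\setminus\{x_i\}$, and computing $\varphi^*/A$ using the natural choice of maximal $\varphi^*$-independent subset of $A$ (namely the complement in $A$ of $\{a_1,\ldots,a_{r-k}\}$, which is $\varphi^*$-independent by the duality between $\underline{M}_\varphi$ and $\underline{M}_\varphi^*=\underline{M}_{\varphi^*}$ from Lemma~\ref{lem:Lemma3.2}), both sides reduce to the same value of $\varphi$ up to a sign coming from the permutation reordering $E$. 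The main obstacle here, and indeed the only part of the proof that requires genuine care, is this sign bookkeeping; it is essentially the same computation carried out in \cite[Lemma 3.4]{AndersonDelucchi} and can be reduced to a standard combinatorial identity for the parities of the relevant permutations.
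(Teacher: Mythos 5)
Your proposal takes the same route as the paper: the paper's proof of this lemma is a one-line remark that the argument is the same as the proofs of Lemmas 3.3 and 3.4 of Anderson--Delucchi, with only the observation that the weak (resp.\ strong) Grassmann--Pl\"ucker relations of $\varphi$ suffice to establish the weak (resp.\ strong) relations for $\varphi/A$ and $\varphi\backslash A$. Your sketch spells out the same substitution trick (plugging $a_1,\ldots,a_\ell$ into (GP3) and killing the extra terms by (GP2)), the same well-definedness argument via basis exchange and a $3$-term relation, and the same sign bookkeeping for part (3), so it matches the intended proof.
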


\subsection{Dual Pairs from Grassmann-Pl{\"u}cker functions}

Let $\varphi$ be a rank $r$ weak Grassmann-Pl{\"u}cker function on $E$ with underlying matroid $\underline M_{\varphi}$.

\begin{lemma}
Let $C$ be a circuit of $M_{\varphi}$, and let $e,f \in C$.  The quantity
\[
\frac{\varphi(e,x_2,\ldots,x_r)}{\varphi(f,x_2,\ldots,x_r)} := \varphi(e,x_2,\ldots,x_r) \odot \varphi(f,x_2,\ldots,x_r)^{-1}
\]
is independent of the choice of $x_2,\ldots,x_r$ such that $\{ f,x_2,\ldots,x_r \}$ is a basis for $M_\varphi$ containing $C \backslash e$.  
\end{lemma}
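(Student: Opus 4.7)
My plan is a standard Plücker-style argument using the 3-term Grassmann--Plücker relation ${\rm (GP3)}'$. First, I note that the ratio is invariant under permutations of $(x_2,\ldots,x_r)$, since by (GP2) both $\varphi(e,x_2,\ldots,x_r)$ and $\varphi(f,x_2,\ldots,x_r)$ change by the same sign. So it suffices to show that the ratio depends only on the unordered set $S := \{x_2,\ldots,x_r\}$.

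Next, I would reduce to the case of a single-element exchange in $S$. A set $S$ is valid iff $S \supseteq C \setminus \{e,f\}$ and $S \cup \{f\}$ is a basis of $\underline{M}_\varphi$. Note that $e \notin S$ (else $S \cup \{f\} \supseteq C$ would be dependent), so the ``free part'' $S \setminus (C \setminus \{e,f\})$ lies in $E \setminus C$. The valid $S$'s then correspond bijectively to the bases of the (loopless) matroid $(\underline M_\varphi / (C \setminus e))|(E \setminus C)$. By the symmetric exchange property of matroids, any two such free parts are connected by a sequence of single-element exchanges $g \mapsto h$ with $g,h \in E \setminus C$, $g \neq h$.

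The key step is to show each such exchange preserves the ratio. Given $S$ and $S'' := (S \setminus g) \cup h$, after permuting (which is harmless) assume $g = x_2$. Apply ${\rm (GP3)}'$ to $I = \{e, f, g, x_3,\ldots,x_r\}$ (size $r+1$) and $J = \{h, x_3,\ldots,x_r\}$ (size $r-1$), noting $|I \setminus J| = 3$. Among the $r+1$ terms of the resulting hypersum, the terms indexed by $x_i$ for $i \geq 3$ vanish by alternation (the element $x_i$ appears twice in the second factor), and the term indexed by $g$ vanishes because $\{e,f\} \cup \{x_3,\ldots,x_r\}$ contains $C$ (using $C \setminus \{e,f\} \subseteq \{x_3,\ldots,x_r\}$, which follows from $g \notin C$), so $\varphi(e,f,x_3,\ldots,x_r) = 0$. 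Only the $k=1$ and $k=2$ terms survive, giving
\[
0 \in -\varphi(f,g,x_3,\ldots,x_r)\,\varphi(e,h,x_3,\ldots,x_r) \boxplus \varphi(e,g,x_3,\ldots,x_r)\,\varphi(f,h,x_3,\ldots,x_r).
\]
By axiom (H1) in a hyperfield, $0 \in -A \boxplus B$ forces $A = B$. Dividing by the nonzero quantities $\varphi(f,g,x_3,\ldots,x_r)$ and $\varphi(f,h,x_3,\ldots,x_r)$ (nonzero because $S \cup \{f\}$ and $S'' \cup \{f\}$ are bases) yields the desired equality of ratios.

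I anticipate no serious obstacles: the argument is essentially computational. The one subtlety to be careful about is that ${\rm (GP3)}'$ only gives a hypersum containment, not an equality, so the reduction to a two-term relation (via vanishing of the third term using the circuit property of $C$) is essential in order to collapse the hypersum to a genuine single-valued identity.
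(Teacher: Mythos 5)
Your proof is correct and follows essentially the same route as the paper: reduce to a single-element basis exchange, apply the 3-term Grassmann--Pl{\"u}cker relation ${\rm (GP3)}'$ to the sets $I$ of size $r+1$ and $J$ of size $r-1$, observe that all but two terms of the hypersum vanish (the crucial one being the ``$k=3$'' term whose first factor is supported on a set containing $C$), and then invoke the uniqueness of hyperinverses (H1) to collapse the two-term containment to a genuine equality. The paper's proof does exactly this, just a bit more tersely; in particular it does not spell out (as you do) that the collection of admissible $(r-1)$-sets forms the basis family of a well-defined auxiliary matroid, which is what justifies the final induction over a chain of single-element exchanges. Your version makes that step explicit, which is a reasonable elaboration rather than a change in method.
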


\begin{proof}
(cf.~\cite[Lemma 4.1]{AndersonDelucchi}) 
Let $\{ f,x_2,\ldots,x_{r-1},x_r' \}$ be another basis for $M_\varphi$ containing $C \backslash e$.  
By Axiom (GP3)$'$, we have
\[
0 \in \varphi(f,x_2,\ldots,x_r) \odot \varphi(e,x_2,\ldots,x_{r-1},x_r') \boxplus -\varphi(e,x_2,\ldots,x_r) \odot \varphi(f,x_2,\ldots,x_{r-1},x_r')
\]
which implies, by Axiom (H1) in Definition~\ref{def:hypergroup}, that 
\[
\varphi(f,x_2,\ldots,x_r) \odot \varphi(e,x_2,\ldots,x_{r-1},x_r') = \varphi(e,x_2,\ldots,x_r) \odot \varphi(f,x_2,\ldots,x_{r-1},x_r').
\]
This proves the lemma for $\varphi$-bases which differ by a single element, and the general case follows by induction on the number of elements by which two chosen bases differ.
\end{proof}

\begin{defn} 
\label{defn:Cphi}
Define ${\mathcal C}_\varphi$ to be the collection of all $X \in F^E$ such that: 
\begin{enumerate}
\item $\underline{X}$ is a circuit of $\underline{M_{\varphi}}$
\item For every $e,f \in E$ and every basis $B=\{ f,x_2,\ldots,x_r \}$ with $\underline X \backslash e \subseteq B$, we have
\[
\frac{X(f)}{X(e)} = -\frac{\varphi(e,x_2,\ldots,x_r)}{\varphi(f,x_2,\ldots,x_r)}.
\]
\end{enumerate}
\end{defn}

It is easy to see that ${\mathcal C}_\varphi$ depends only on the equivalence class of $\varphi$.  Set ${\mathcal D}_\varphi := {\mathcal C}_{\varphi^*}$.

\begin{lemma} \label{lem:Prop4.3}
\begin{enumerate}
\item The sets ${\mathcal C}_\varphi$ and ${\mathcal D}_\varphi$ form a weak dual pair of $F$-signatures of $M_\varphi$ in the sense of \S\ref{sec:DualPairs}.
\item If $\varphi$ is a Grassmann-Pl\"ucker function then ${\mathcal C}_\varphi$ and ${\mathcal D}_\varphi$ form a dual pair.
\item ${\mathcal C}_{\varphi / e} = \cC_\varphi /e$ and ${\mathcal C}_{\varphi \backslash e} = \cC_\varphi \backslash e$.
\end{enumerate}
\end{lemma}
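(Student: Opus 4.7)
My plan is to verify each of the three parts by unpacking Definition~\ref{defn:Cphi} and invoking the Grassmann--Pl\"ucker relations applied to suitably chosen bases.

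For part (1), I first check that $\cC_\varphi$ is an $F$-signature of $\underline{M}_\varphi$. Axioms (C0) and (C1) hold by inspection of Definition~\ref{defn:Cphi}. Given a circuit $C$ of $\underline{M}_\varphi$ and a chosen $e \in C$, fix a basis $B$ of $\underline{M}_\varphi$ containing $C \setminus e$; setting $X(e) = 1$ and defining the remaining coordinates by the formula of Definition~\ref{defn:Cphi} yields an element $X \in \cC_\varphi$ with $\underline{X} = C$, and the preceding lemma (independence of the ratio on the choice of basis) ensures consistency. Any two elements of $\cC_\varphi$ with the same support agree up to an $F^\times$-scalar, so taking supports gives a bijection on projectivizations. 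By Lemma~\ref{lem:Lemma3.2}, $\underline{M}_{\varphi^*} = \underline{M}_\varphi^*$, so the same reasoning shows $\cD_\varphi$ is an $F$-signature of $\underline{M}_\varphi^*$.

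The substantive step of part (1) is verifying ${\rm (DP3)}'$. Take $X \in \cC_\varphi$ with $\underline{X} = C$ and $Y \in \cD_\varphi$ with $\underline{Y} = D$, with $|C \cap D| \leq 3$. Since a circuit and a cocircuit of a matroid never meet in exactly one element, $|C \cap D| \in \{0, 2, 3\}$. The case $|C \cap D| = 0$ is trivial, as every term of $X \odot Y$ vanishes. For $|C \cap D| = s \in \{2,3\}$, label $C \cap D = \{e_1, \ldots, e_s\}$ and choose a basis $B$ with $C \setminus e_1 \subseteq B$ and $B \cap D = \{e_2, \ldots, e_s\}$. Such a $B$ exists because $(C \setminus e_1) \cup H$ has rank $r$ (where $H = E \setminus D$ is the hyperplane dual to $D$), so the $r - |C| + 1$ elements needed to complete $C \setminus e_1$ to a basis can be drawn from $H$. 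With this $B$, Definition~\ref{defn:Cphi} expresses each $X(e_i)$ as an explicit $F^\times$-multiple of a quotient of values of $\varphi$ at bases obtained by swapping $e_1$ with $e_i$; a parallel computation using the cobasis $E \setminus B$ and the defining formula for $\varphi^*$ yields $Y(e_i)$ as an analogous quotient of values of $\varphi$. Substituting into $X \odot Y$ and factoring out the common scalar, the containment $0 \in \bigboxplus_{i=1}^{s} X(e_i) Y(e_i)$ reduces to a single instance of ${\rm (GP3)}'$ applied to the $(r+1)$-set $\{e_1, \ldots, e_s\} \cup (B \setminus D)$ and a suitable $(r-1)$-set $J \subseteq B$ with $|I \setminus J| = s$. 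This is precisely the calculation of \cite[Proposition 4.3]{AndersonDelucchi}; every step carries over verbatim since only the hyperaddition structure of $F$ enters the final manipulation.

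Part (2) is obtained by running the same argument without the support-size restriction: when $\varphi$ satisfies the full axiom (GP3), the basis construction above is valid for arbitrary $|C \cap D|$, and applying (GP3) to the appropriate $I, J$ delivers $0 \in \bigboxplus_{e \in C \cap D} X(e) Y(e)$. For part (3), compatibility with minors follows by combining Lemma~\ref{lem:MinorLemma} (which identifies $\underline{M}_{\varphi/e}$ with $\underline{M}_\varphi / e$ and $\underline{M}_{\varphi \setminus e}$ with $\underline{M}_\varphi \setminus e$) with direct substitution of the definitions of $\varphi/e$ and $\varphi \setminus e$ into the ratio formula of Definition~\ref{defn:Cphi}. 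The ratio $X(f)/X(g)$ appearing in the defining formula for $\cC_{\varphi/e}$ reduces after cancellation to the corresponding ratio for $\cC_\varphi$ (evaluated on a basis containing $e$), matching the description of $\cC_\varphi / e$ in \S\ref{sec:minors}; the analogous bookkeeping handles $\cC_\varphi \setminus e$.

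The main obstacle is the orthogonality computation in part (1): one must choose the basis $B$ so that the formulas for $X$ and $Y$ coming from $\varphi$ and $\varphi^*$ combine cleanly into a single Grassmann--Pl\"ucker relation, and track the signs introduced by the sign function in the definition of $\varphi^*$. Once the basis is arranged, the hyperfield step is formally identical to the classical field-valued argument.
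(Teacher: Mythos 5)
Your overall strategy matches the paper's: show that $\cC_\varphi$ and $\cD_\varphi$ are $F$-signatures of $\underline{M}_\varphi$ and $\underline{M}_\varphi^*$ respectively, then deduce orthogonality from a single application of (GP3) or ${\rm (GP3)}'$ after a judicious choice of the two index sets $I$ and $J$. Your construction of $B$ is fine: extend $\underline{X} \setminus e_1$ to a basis using elements of the hyperplane $E \setminus D$, so that $B \cap D = \{e_2,\ldots,e_s\}$ and $I := B \cup \{e_1\}$ is the $(r+1)$-set. The problem is your claimed choice of the $(r-1)$-set: you assert one can take $J \subseteq B$ with $|I \setminus J| = s$. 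Since $I = B \cup \{e_1\}$ and $e_1 \notin B$, any $J \subseteq B$ with $|J| = r-1$ satisfies $|I \setminus J| = 1 + |B \setminus J| = 2$, so the constraint $|I \setminus J| = s$ is unsatisfiable when $s = 3$ (and unavailable for general $s$ in part (2)). Worse, even for $s=2$ this $J$ gives the trivial relation $0 \in \varphi(B)\varphi(B') \boxplus -\varphi(B')\varphi(B)$ and yields no information about $X \perp Y$.

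What the paper actually does is introduce a \emph{cobasis} alongside $B$: it chooses a basis $B^*$ of $\underline{M}_{\varphi^*}$ containing $\underline{Y} \setminus y_1$ and disjoint from $B \setminus \underline{Y}$, and sets $J := E \setminus (B^* \cup \{y_1\})$, which is an $(r-1)$-subset of $E$ (not of $B$). The point of this choice is twofold: $\{x_k\} \cup J$ is a basis of $\underline{M}_\varphi$ precisely when $x_k \in \underline{Y}$ (because $J$ spans the hyperplane $E \setminus \underline{Y}$), while $I \setminus x_k$ is a basis precisely when $x_k \in \underline{X}$; so the nonzero terms of the GP relation are exactly those with $x_k \in \underline{X} \cap \underline{Y}$, and one reads off $X \perp Y$ after dividing by the appropriate $\varphi$-values via Corollary~\ref{cor:Cor4.4}. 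The disjointness of $B^*$ from $B \setminus \underline{Y}$ is what guarantees $I \setminus J = \underline{X} \cap \underline{Y}$ exactly, which is what makes the condition $|I \setminus J| \leq 3$ of ${\rm (GP3)}'$ available in the weak case. You need to add this cobasis construction; without it, the reduction to a GP relation does not go through.
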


\begin{proof}
(cf.~\cite[Proposition 4.3]{AndersonDelucchi}) 
The only nontrivial thing to check is (DP3)$'$ (resp. (DP3)).  To see this, let $X \in \cC_\varphi$ and $Y \in \cD_\varphi$, assuming furthermore that 
$|\underline X \cap \underline Y| \leq 3$ if $\varphi$ is not a strong Grassmann-Pl\"ucker function.
If $\underline{X} \cap \underline{Y} = \emptyset$ then $X \perp Y$ by definition.  Otherwise, we can write $\underline{X} = \{ x_1,\ldots,x_k \}$ and $\underline{Y}=\{ y_1,\ldots,y_\ell \}$ with the elements of $\underline{X} \cap \underline{Y} = \{ x_1,\ldots,x_n \} = \{ y_1,\ldots y_n \}$ written first, so that
$n \geq 1$ and $x_i = y_i$ for $1 \leq i \leq n$.

Since $\underline{X} \backslash x_i$ is independent for all $i=1,\ldots,k$, we must have $k \leq r+1$, and similarly $\ell \leq m-r+1$.
Since $\underline X \setminus x_1$  is independent and $\underline Y \setminus \underline X$ is coindependent in the matroid $\underline M_\varphi$, we can extend $\underline X \setminus x_1$ to a base $B = \{x_2, \ldots x_{r+1}\}$ of $\underline M_\varphi$ disjoint from $\underline Y \setminus \underline X$. Similarly, since $\underline Y \setminus y_1$ is independent and $B - \underline Y$ is coindependent in the matroid $\underline M_{\varphi^*}$, we can extend $Y \setminus y_1$ to a basis $B^*$ of $M_{\varphi^*}$ which is disjoint from $B - \underline Y$. Write $E \backslash (B^* \cup y_1 = \{ z_1,\ldots,z_{r-1} \}$.
If $|\underline X \cap \underline Y| \leq 3$ then $|\{x_1, \ldots, x_{r+1} \} \setminus \{ z_1,\ldots,z_{r-1} \}| =|\underline X \cap \underline Y| \leq 3$.
By either (GP3) or (GP3)$'$, we have
\begin{equation}
\label{eq:Prop4.3a}
\begin{aligned}
0 &\in \bigboxplus_{i=1}^{r+1} (-1)^i \odot \varphi(x_1,\ldots,\hat{x}_i,\ldots,x_{r+1}) \odot \varphi(x_i,z_1,\ldots,z_{r-1}) \\
&=  \bigboxplus_{i=1}^{n} (-1)^i \odot \varphi(x_1,\ldots,\hat{x}_i,\ldots,x_{r+1}) \odot \varphi(x_i,z_1,\ldots,z_{r-1}) \\
&=  \bigboxplus_{i=1}^{n} \sigma \odot \varphi(x_1,\ldots,\hat{x}_i,\ldots,x_{r+1}) \odot \noinvolution{\varphi^*(y_1,\ldots,\hat{y}_i,\ldots,y_{m-r+1})}, \\
\end{aligned}
\end{equation}
where 
\[
\sigma = (-1)^{r-1} {\rm sign}(z_1,\ldots,z_{r-1},y_1,\ldots,y_{m-r+1}).
\]

Multiplying both sides of (\ref{eq:Prop4.3a}) by 
\[
\sigma \odot \varphi(x_2,\ldots,x_{r+1})^{-1} \odot \noinvolution{\varphi^*(y_2,\ldots,y_{m-r+1})}^{-1}
\]
gives
\begin{equation}
\label{eq:Prop4.3b}
0 \in \bigboxplus_{i=1}^{n} X(x_i) \odot X(x_1)^{-1} \odot \noinvolution{Y(x_i)} \odot \noinvolution{Y(y_1)}^{-1}.
\end{equation}

Multiplying both sides of (\ref{eq:Prop4.3b}) by $X(x_1) \odot \noinvolution{Y(y_1)}$ then shows that $X \perp Y$.
\end{proof}

\begin{cor}
\label{cor:Cor4.4}
With notation as in Lemma~\ref{lem:Prop4.3}, we have:
\begin{enumerate}
\item For $X \in \cC_\varphi$ and $x_i,x_j \in \underline{X}$,
\[
\frac{X(x_i)}{X(x_j)} = (-1)^{i-j} \frac{\varphi(x_1,\ldots,\hat{x}_i,\ldots,x_{r+1})}{\varphi(x_1,\ldots,\hat{x}_j,\ldots,x_{r+1})}.
\]
\item For $Y \in \cD_\varphi$ and $y_i, y_j \in \underline{Y}$,
\[
\noinvolution{Y(y_j)} \odot \noinvolution{Y(y_i)}^{-1} = \frac{\varphi(y_j,z_1,\ldots,z_{r-1})}{\varphi(y_i,z_1,\ldots,z_{r-1})}.
\]
\end{enumerate}
\end{cor}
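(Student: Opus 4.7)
The plan is to deduce both identities by careful bookkeeping using Definition~\ref{defn:Cphi} and the alternating property (GP2); part (2) will follow from part (1) applied to the dual weak Grassmann--Pl\"ucker function $\varphi^*$ (valid by Lemma~\ref{lem:Lemma3.2}), combined with the defining formula for $\varphi^*$.

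For part (1), first write $\underline X = \{x_1, \ldots, x_k\}$ (where $k \leq r+1$) and extend by elements of $E \setminus \underline X$ to a set $\{x_1, \ldots, x_{r+1}\}$ such that $\{x_2, \ldots, x_{r+1}\}$ is a basis of $\underline M_\varphi$; then $\underline X$ is the fundamental circuit of $x_1$ with respect to this basis, so by basis exchange $B_i := \{x_1, \ldots, \hat x_i, \ldots, x_{r+1}\}$ is a basis for every $x_i \in \underline X$. Fix $x_i, x_j \in \underline X$, assuming without loss of generality that $i > j$, and apply Definition~\ref{defn:Cphi} with $e = x_i$, $f = x_j$, and the basis $B_i$ ordered to place $x_j$ first. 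This expresses $X(x_j)/X(x_i)$ as $-1$ times a ratio of two values of $\varphi$, each of whose tuples begins with $x_i$ or $x_j$. Using (GP2) to move these leading entries back to their natural positions in $(x_1, \ldots, \hat x_j, \ldots, x_{r+1})$ and $(x_1, \ldots, \hat x_i, \ldots, x_{r+1})$ produces (via a transposition count) the sign $(-1)^{i-j}$, and inverting yields the formula in (1).

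For part (2), note that $\cD_\varphi = \cC_{\varphi^*}$. Applying part (1) to $\varphi^*$ and $Y \in \cC_{\varphi^*}$ expresses $Y(y_j)/Y(y_i)$ as $(-1)^{i-j}$ times the ratio $\varphi^*(y_1, \ldots, \hat y_j, \ldots, y_{m-r+1}) / \varphi^*(y_1, \ldots, \hat y_i, \ldots, y_{m-r+1})$, where $\{y_2, \ldots, y_{m-r+1}\}$ is a suitable basis of $\underline M_\varphi^*$ extending $\underline Y \setminus y_1$. Using the defining formula for $\varphi^*$ to convert each factor into a $\varphi$-value $\varphi(y_j, z_1, \ldots, z_{r-1})$ or $\varphi(y_i, z_1, \ldots, z_{r-1})$, with $(z_1, \ldots, z_{r-1})$ enumerating $E \setminus \{y_1, \ldots, y_{m-r+1}\}$, introduces two $\mathrm{sign}$-factors whose ratio works out to $(-1)^{i-j}$ via another direct transposition count. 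These extra signs cancel the $(-1)^{i-j}$ from the application of part (1), leaving the unsigned formula in (2).

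The hard part will be keeping track of signs: several permutation-sign computations are required, and the asymmetry between the signed statement in (1) and the unsigned statement in (2) rests on a delicate cancellation between two independently-arising factors of $(-1)^{i-j}$ in the proof of (2). Once the sign bookkeeping is carried out, both parts reduce to straightforward applications of Definition~\ref{defn:Cphi}.
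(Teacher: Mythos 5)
Your argument is correct and follows the natural, direct approach: part (1) by unwinding Definition~\ref{defn:Cphi} together with the alternating property (GP2) and a transposition count, and part (2) by applying part (1) to $\varphi^*$, using $\cD_\varphi = \cC_{\varphi^*}$ and the defining formula for $\varphi^*$, with the two independent $(-1)^{i-j}$ sign factors canceling. Since the paper's own proof simply defers to the corresponding result of Anderson--Delucchi, your write-up is essentially a correct spelling-out of the same argument, and the sign bookkeeping you outline does indeed close up as you predict.
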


\begin{proof}
This follows from the same argument as \cite[Corollary 4.4]{AndersonDelucchi}.
\end{proof}

\subsection{Grassmann-Pl{\"u}cker functions from Dual Pairs}

In the previous section, we associated a (weak) dual pair $(\cC_\varphi,\cD_\varphi)$, depending only on the equivalence class of $\varphi$, to each (weak) Grassmann-Pl{\"u}cker function $\varphi$.
However, we don't yet know that $\cC_\varphi$ and $\cD_\varphi$ satisfy the modular elimination axiom (although this will turn out later to be the case).
In this section, we go the other direction, associating a (weak) Grassmann-Pl{\"u}cker function to a (weak) dual pair.

\begin{theorem}
\label{thm:Prop4.6}
Let $\cC$ and $\cD$ be a weak dual pair of $F$-signatures of a matroid $\underline{M}$ of rank $r$.
Then $\cC = \cC_\varphi$ and $\cD = \cD_\varphi$ for a rank $r$ weak Grassmann-Pl{\"u}cker function $\varphi$ which is
uniquely determined up to equivalence. If $\cC$ and $\cD$ form a dual pair then $\varphi$ is a Grassmann-Pl\"ucker function.
\end{theorem}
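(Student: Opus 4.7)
The plan is to construct a candidate weak Grassmann--Pl\"ucker function $\varphi$ explicitly from the pair $(\cC, \cD)$, show the construction is well-defined, verify (GP1)--(GP3)$'$, and check the identifications $\cC = \cC_\varphi$, $\cD = \cD_\varphi$. Uniqueness up to equivalence will be immediate from Corollary~\ref{cor:Cor4.4}(1), which forces the ratios of $\varphi$ over adjacent bases in terms of entries of elements of $\cC$. In the strong case, the same construction will be shown to yield a (strong) Grassmann--Pl\"ucker function, using the full (DP3) in place of (DP3)$'$ at the last step.

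To construct $\varphi$, I would first fix an ordered basis $B_0 = (b_1, \ldots, b_r)$ of $\underline{M}$ and normalize $\varphi(b_1, \ldots, b_r) := 1$. For each $f \in E \setminus B_0$, the $F$-signature property produces a unique $C_f \in \cC$ whose support is the fundamental circuit of $f$ relative to $B_0$ and with $C_f(f) = 1$. The formula of Corollary~\ref{cor:Cor4.4}(1) then dictates the value of $\varphi$ on every basis $B' = (B_0 \setminus b_i) \cup f$ adjacent to $B_0$ in the basis exchange graph. Iterating this elementary-exchange recipe along paths in the basis graph extends $\varphi$ to all bases of $\underline{M}$; extending alternately and setting $\varphi$ to zero on non-basis $r$-tuples gives a function on all of $E^r$.

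The main obstacle is to verify that $\varphi$ is well-defined, i.e., that the value assigned to a basis is independent of the path in the basis graph used to reach it. I would reduce this to checking consistency on each generator of the cycle space of the basis exchange graph; these elementary cycles are the ``symmetric exchange'' 4-cycles of the form $B_0 \to (B_0 \setminus b) \cup f \to (B_0 \setminus \{b, b'\}) \cup \{f, f'\} \to (B_0 \setminus b') \cup f' \to B_0$. For each such 4-cycle, the identity to be checked is a product of ratios of entries of elements of $\cC$, and this will follow from rewriting the product as a nonzero scalar multiple of an inner product $X \odot Y$ for an $X \in \cC$ and $Y \in \cD$ with $|\underline{X} \cap \underline{Y}| \leq 3$, then applying (DP3)$'$ together with the reversibility axiom (H1) of Definition~\ref{def:hypergroup} to convert $0 \in a \boxplus b$ into $a = -b$. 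This is the crux of the proof and is where the corresponding argument of Anderson--Delucchi for phased matroids can be adapted \emph{mutatis mutandis}.

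Once well-definedness is in hand, (GP1) holds by the normalization, (GP2) is built into the construction, and (GP3)$'$ is proved as follows: for $I = \{x_1, \ldots, x_{r+1}\}$ and $J = \{y_1, \ldots, y_{r-1}\}$ with $|I \setminus J| = 3$, the alternating property annihilates every summand indexed by $x_k \in J$, leaving a 3-term hypersum. Using Corollary~\ref{cor:Cor4.4}(1) and (2) to re-express the three surviving summands in terms of a specific circuit $X \in \cC$ and cocircuit $Y \in \cD$ with $|\underline{X} \cap \underline{Y}| = 3$, the relation becomes a nonzero multiple of $0 \in X \odot Y$, which holds by (DP3)$'$. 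In the strong case, the same reduction applied to arbitrary $I, J$ produces $X, Y$ with $|\underline{X} \cap \underline{Y}|$ possibly larger than $3$, and the full orthogonality (DP3) then gives (GP3). Finally, $\cC_\varphi = \cC$ and $\cD_\varphi = \cD$ follow because their supports agree with the circuits and cocircuits of $\underline{M}$ and because an $F$-signature is determined on each support up to a scalar, which the construction has pinned down correctly by Corollary~\ref{cor:Cor4.4}.
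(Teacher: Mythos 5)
Your overall strategy — build $\varphi$ from circuit data, verify well-definedness, check the Grassmann--Pl\"ucker relations, and confirm $\cC_\varphi=\cC$, $\cD_\varphi=\cD$ — is the standard route and is broadly the one the paper (deferring almost entirely to Anderson--Delucchi, whose Proposition~4.6 it cites and lightly modifies) uses. Your variant is more explicitly ``path-based'' through the basis exchange graph, whereas Anderson--Delucchi carry out the construction and verification in three somewhat more direct steps; these are cosmetically different packagings of the same argument, and your reduction of the 3-term relation to $X\perp Y$ with $|\underline X\cap\underline Y|\le 3$ is exactly the point at which the paper invokes ${\rm (DP3)}'$.

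There is, however, a concrete gap in the well-definedness step. The cycle space of the basis exchange graph is \emph{not} generated by the symmetric-exchange squares alone: one needs both $3$-cycles and $4$-cycles (this is Maurer's theorem on basis graphs). The triangles $B$, $(B\setminus a)\cup x$, $(B\setminus a)\cup y$ give a genuine consistency condition among \emph{three distinct} circuits $C_1\subseteq B\cup x$, $C_2\subseteq B\cup y$, $C_3\subseteq (B\setminus a)\cup\{x,y\}$, and this does not come for free from (C0)--(C2). Fortunately the fix is in the same spirit as what you already propose: pairing each $C_i$ against the fundamental cocircuit $D$ of $a$ with respect to $B$ gives $|\underline{C_i}\cap\underline D|=2$, and orthogonality forces the three two-term relations whose product is the required triangle identity (with the signs absorbed by the $(-1)^{i-j}$ factors in Corollary~\ref{cor:Cor4.4}). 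You should state and verify this triangle case alongside the square case; as written, the proposal silently assumes a generating set for the cycle space that is too small. A secondary point: for the squares you assert that the consistency identity can be written as a scalar multiple of a single $X\odot Y$ with $|\underline X\cap\underline Y|\le 3$, but four distinct circuits enter the identity; you would need to exhibit the specific $X\in\cC$, $Y\in\cD$ and check that the four ratios really do collapse to the three terms of $X\odot Y$ (or else combine two orthogonalities with intersection~$2$). This is plausible but not automatic, and it is where the real 4.5-page work of \cite[Proposition~4.6]{AndersonDelucchi} lives.
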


\begin{proof}
The proof of this result, while rather long and technical, is essentially the same as the special case of phased matroids given in \cite[Proposition 4.6]{AndersonDelucchi}.
Rather than reproduce the entire argument, which takes up 4.5 pages of \cite{AndersonDelucchi}, we will content ourselves with indicating the (minor) changes which need to be made in the present context.

Step 1 from {\em loc.~cit.} goes through without modification.  In Step 2, the orthogonality relation $\cC \perp \cD$ now imples that $\noinvolution{Y(f)} \odot \noinvolution{Y(e)}^{-1} = -X(e) \odot X(f)^{-1}$ rather than $Y(e) Y(f)^{-1} = -X(e)X(f)^{-1}$.
Thus the displayed equation (4) needs to be replaced with 
\[
\noinvolution{Y(e)} \odot \noinvolution{Y(f)}^{-1} = \varphi_{\cC}(e,t_2,\ldots,t_r) \odot \varphi_{\cC}(f,t_2,\ldots,t_r)
\]
(instead of the reciprocal of the right-hand side).

In Step 3, equations (3) and (4) and the assumption $X \perp Y$ show (with notation from {\em loc.~cit.}) that
\begin{equation}
\label{eq:Prop4.6}
\begin{aligned}
0 &\in \bigboxplus_{x_i \in C_S \cap D_T} X(x_i) \odot \noinvolution{Y(x_i)} \\
&= \bigboxplus_{x_i \in C_S \cap D_T} \frac{X(x_i)}{X(x_0)} \odot \frac{\noinvolution{Y(x_i)}}{\noinvolution{Y(x_0)}} \\
&= \bigboxplus_{x_i \in C_S \cap D_T} (-1)^i \frac{\varphi_{\cC}(x_0,\ldots,\hat{x}_i,\ldots,x_r)}{\varphi_{\cC}(x_1,\ldots,x_r)} \odot \frac{\varphi_{\cC}(x_i,y_2,\ldots,y_r)}{\varphi_{\cC}(x_0,y_2,\ldots,y_r)}
\end{aligned}
\end{equation}
and multiplying both sides of (\ref{eq:Prop4.6}) by $\varphi_{\cC}(x_1,\ldots,x_r) \odot  \varphi_{\cC}(x_0,y_2,\ldots,y_r)$ gives 
\[
0 \in \bigboxplus_{x_i \in C_S \cap D_T} (-1)^i \odot \varphi_{\cC}(x_0,\ldots,\hat{x}_i,\ldots,x_r) \odot \varphi_{\cC}(x_i,y_2,\ldots,y_r), 
\]
which is (GP3).
\end{proof}

\subsection{From Grassmann-Pl{\"u}cker functions to Circuits}

In this section, we prove that the set $\cC_\varphi$ of elements of $F^E$ induced by a (weak) Grassmann-Pl{\"u}cker function $\varphi$ is the set of $F$-circuits of a (weak) $F$-matroid with support $\underline M_{\varphi}$.
The only non-trivial axiom is the Modular Elimination axiom (C3)$'$ (resp. (C3)).  

\begin{theorem}
\label{thm:Prop5.3}
{
Let $\varphi$ be a strong (resp. weak) Grassmann-Pl{\"u}cker function on $E$.  Then the set $\cC_\varphi \subseteq F^E$ satisfies the strong Modular Elimination axiom ${\rm (C3)}$  (resp. the weak Modular Elimination axiom ${\rm (C3)}'$).
}
\end{theorem}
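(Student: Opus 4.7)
The plan is to follow the approach of Anderson--Delucchi \cite{AndersonDelucchi} in their treatment of phased matroids, extended so that the full Grassmann--Pl\"ucker relation (GP3) (rather than only its three-term restriction) is exploited in the strong case.

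First I would handle the weak case ${\rm (C3)}'$. Suppose $X, Y \in \cC_\varphi$ form a modular pair with $X(e)=-Y(e)\neq 0$. By Lemma~\ref{lem:white}, there is a unique circuit $\underline Z$ of $\underline{M}_\varphi$ contained in $(\underline X \cup \underline Y)\setminus e$ (and containing $\underline X \triangle \underline Y$), together with a basis $B$ of $\underline{M}_\varphi$ and distinct elements $e_X, e_Y \notin B$ such that $\underline X = C(B, e_X)$ and $\underline Y = C(B, e_Y)$. Using the formula from Corollary~\ref{cor:Cor4.4}(1), I would define a representative $Z \in F^E$ supported on $\underline Z$, and rescale so that $Z(f_0) = X(f_0) + Y(f_0)$ at some fixed $f_0 \in \underline X \triangle \underline Y$ (where this ``sum'' is single-valued because exactly one of $X(f_0), Y(f_0)$ vanishes). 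For each remaining $f \in E$, the containment $Z(f) \in X(f) \boxplus Y(f)$ would be extracted from one carefully chosen instance of ${\rm (GP3)}'$: the $(r+1)$- and $(r-1)$-tuples are obtained from $B, e_X, e_Y, e, f$ so that exactly three summands survive, and up to a common nonzero multiple they equal $-Z(f), X(f), Y(f)$.

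For the strong case (C3), I would argue analogously with a modular family $X, X_1, \ldots, X_k$ of $F$-circuits and distinguished elements $e_i$ satisfying $X(e_i) = -X_i(e_i) \neq 0$. The lattice-theoretic modularity hypothesis, together with the exchange arguments generalizing those underlying Lemma~\ref{lem:white}, provides a common basis $B$ relative to which $\underline X, \underline{X_1}, \ldots, \underline{X_k}$ arise as fundamental circuits at $k+1$ distinct added elements, and singles out a unique circuit $\underline Z$ of $\underline{M}_\varphi$ contained in $(\underline X \cup \bigcup_i \underline{X_i}) \setminus \{e_1, \ldots, e_k\}$. Defining $Z$ again by Corollary~\ref{cor:Cor4.4}(1), for each $f \in E$ I would invoke the full (GP3) applied to $(r+1)$- and $(r-1)$-tuples whose symmetric difference is precisely $\{e_1, \ldots, e_k, e, f\}$ for a suitable element $e$ determined by $\underline Z$. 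The modular-family hypothesis forces exactly $k+2$ summands of the resulting GP relation to be nonzero, and after a uniform rescaling these become $-Z(f), X(f), X_1(f), \ldots, X_k(f)$; reversibility (Axiom (H1)) then yields the desired $Z(f) \in X(f) \boxplus X_1(f) \boxplus \cdots \boxplus X_k(f)$.

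The main obstacle is the sign bookkeeping required to verify that the $k+2$ surviving terms of (GP3) line up, after the normalizations $X(e_i) = -X_i(e_i)$, into exactly the hypersum $X(f) \boxplus X_1(f) \boxplus \cdots \boxplus X_k(f)$ on one side and $-Z(f)$ on the other, rather than some signed variant that would contradict reversibility. This is also precisely what forces the strong case to appeal to (GP3) in its full generality: only by working with $|I \setminus J| = k+2$ does one see all $k+1$ given circuits together with the eliminated circuit in a single Grassmann--Pl\"ucker relation, which accounts for why strong and weak $F$-matroids genuinely differ over hyperfields in which the three-term GP relations fail to imply the longer ones (cf.\ the counterexamples in \S\ref{sec:counterexample}).
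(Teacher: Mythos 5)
Your proposal follows the same broad strategy as the paper's proof: choose bases making the relevant circuits fundamental, plug into the Grassmann--Pl\"ucker relation (three-term for weak, full for strong), and identify the surviving summands with $X(f), X_i(f), -Z(f)$ up to a common nonzero scalar; the weak case then falls out as the $k=1$ specialization with $|I\setminus J|\le 3$. However, there are a few places where the proposal diverges from the paper or is imprecise in ways that would have to be repaired.

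First, you propose a \emph{single} basis $B$ making $\underline X, \underline{X_1}, \ldots, \underline{X_k}$ all fundamental. Such a basis exists (pick $a_i \in \underline{X_i} \setminus \bigcup_{j\ne i}\underline{X_j}$ and $a_0 \in \underline X\setminus\bigcup_i\underline{X_i}$, and take $I = A\setminus\{a_0,\ldots,a_k\}$ as a basis of $\underline M|A$, then extend), but this requires a lattice argument generalizing Lemma~\ref{lem:white}(5), which you allude to but do not supply. More importantly, with that choice $\underline Z$ is \emph{not} a fundamental circuit of $B$, so the GP relation you'd have to write down is not of the simple form ``$(B\cup\{a_0\})$ against $B\setminus\{f\}$.'' The paper sidesteps this by making a different choice: it takes $I' = A\setminus\{z,e_1,\ldots,e_k\}$, with respect to which $\underline Z$ and all the $\underline{X_i}$ are fundamental (but $\underline X$ is not), and then introduces a \emph{second} basis $J'\supseteq\underline X\setminus\{z\}$, extending both $I'$ and $J'$ by a common basis $K$ of $\underline M/A$ to bases $B_2 = I'\cup K$ and $B_1 = J'\cup K$ of $\underline M$. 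The GP relation is then applied to the $(r+1)$-tuple $\{z\}\cup B_1$ and the $(r-1)$-tuple $B_2\setminus\{f\}$. You would need to find the analogous pair of tuples for your basis choice, and it is not obvious that it is simpler.

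Second, your claim that ``exactly $k+2$ summands survive'' (and ``exactly three'' in the weak case) is too strong. In the paper's argument, the surviving terms are indexed by $\{z,f,e_1,\ldots,e_k\}$, but any of them may vanish: the $z$-term vanishes when $f\notin\underline Z$, the $e_i$-term when $f\notin\underline{X_i}$, and the $f$-term when $f\notin\underline X$. This is harmless precisely because a vanishing GP summand corresponds to a vanishing circuit value (e.g.\ $Z(f)=0$), so the hypersum identity $0\in\lambda(-Z(f)\boxplus X(f)\boxplus\bigboxplus_i X_i(f))$ still holds. Your plan needs to incorporate this case analysis rather than assuming all $k+2$ terms are nonzero.

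Third, ``symmetric difference $\{e_1,\ldots,e_k,e,f\}$'' should read \emph{set difference} $I\setminus J$: the symmetric difference of a set of size $r+1$ and one of size $r-1$ always has even cardinality, whereas $k+2$ may be odd. The paper works with $|I\setminus J| = k+2$ (and $|J\setminus I| = k$).

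None of these is a fatal conceptual error --- the approach is essentially correct and matches the paper in spirit --- but as written the plan glosses over the two-basis mechanism, the vanishing-summand case analysis, and the parity of the GP set difference, all of which need attention for the argument to close.
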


\begin{proof}
We prove the strong case first.
Let $\underline M$ be the matroid on $E$ corresponding to the support of $\varphi$. Suppose we have a modular family $X, X_1, \ldots X_k$ and elements $e_1 \ldots e_k \in E$ as in ${\rm(C3)}$. Let $z$ be any element of $\underline X \setminus \bigcup_{i = 1}^k \underline X_i$. Let $A = \underline X \cup \bigcup_{i = 1}^k \underline X_i$, and consider the matroid $N = \underline M |A$.  Since $A$ has height $k + 1$ in the lattice of unions of circuits of $\underline M$, the rank of $N^*$ is $k + 1$. Thus the rank of $N$ is $|A| - k - 1$. The set $I = A \setminus \{z, e_1, \ldots e_k\}$ has this rank and is spanning, so it is a basis of $N$. Let $Z \in \cC_{\varphi}$ with $\underline Z$ given by the fundamental circuit of $z$ with respect to $I$ and with $Z(z) = X(z)$. It is clear that $Z(e_i) = 0$ for $1 \leq i \leq k$. We must show that for any $f \in E$ we have $Z(f) \in X(f) \boxplus \left(\bigboxplus_{i=1}^k X_i(f)\right)$. This is clear if $f$ is $z$ or one of the $e_i$ or if $f \not \in A$, so we may suppose that $f \in I$.

Let $J$ be a basis of $N$ including $\underline X \setminus\{ z\}$ and let $K$ be a basis of $M/A$. Then $B_1 = J \dot \cup K$ and $B_2 = I \dot \cup K$ are bases of $M$. Let $x_1 = z$ and let $x_2, \ldots, x_{r+1}$ enumerate $B_1$. Let $y_1, \ldots, y_{r-1}$ enumerate $B_2 \setminus \{f\}$. We define the constants $\lambda_1$ and $\lambda_2$ by
$$\lambda_1 = \frac{\varphi(x_2, \ldots x_{r+1})}{X(z)} \qquad \lambda_2 = \varphi(f, y_1, \ldots, y_{r-1})$$
Consider any $i$ with $2 \leq i \leq r$. If $x_i \not \in \underline X$ then $\{x_1, \ldots, \hat x_i, \ldots x_{r+1}\}$ is not a basis, so $\varphi(x_1, \ldots, \hat x_i, \ldots x_{r+1}) = 0$. If $x_i \in \underline X$ then $$\frac{X(x_i)}{X(z)} = -\frac{\varphi(z, x_2, \ldots, \hat x_i, \ldots x_{r+1})}{\varphi(x_i, x_2, \ldots  \hat x_i, \ldots x_{r+1})}$$ and in either case it follows that $$\varphi(x_1, \ldots, \hat x_i, \ldots x_{r+1}) = (-1)^i \lambda_1 X(x_i).$$
This formula also clearly holds for $i = 1$.

For $1 \leq i \leq k$, if $f \not \in \underline X_i$ then $\{e_i, y_1, \ldots y_{r-1}\}$ is not a basis of $\underline M$, so $\varphi(e_i, y_1, \ldots y_{r-1}) = 0$. If $f \in \underline X_i$ then we have $$\frac{X_i(f)}{X_i(e_i)} = - \frac{\varphi(e_i, y_1, \ldots, y_{r-1})}{\varphi(f, y_1, \ldots, y_{r-1})}.$$ In either case, it follows that $$\varphi(e_i, y_1, \ldots, y_{r-1}) = \lambda_2 \frac{X_i(f)}{X(e_i)}.$$ Similarly we have $$\varphi(z, y_1, \ldots, y_{r-1}) = -\lambda_2\frac{Z(f)}{X(z)}.$$

Applying (GP3) we have
$$0 \in \bigboxplus_{s = 1}^{r+1} (-1)^s \varphi(x_1, \ldots, \hat x_s, \ldots, x_{r+1}) \varphi(x_s, y_1, \ldots y_{r-1}).$$
Many of these summands are 0. If $x_s \not \in A$ then $\varphi(x_1, \ldots, \hat x_s, \ldots, x_{r+1}) = 0$. If $x_s \in I \setminus \{f\}$ then $\varphi(x_s, y_1, \ldots, y_{r-1}) = 0$. The only other possibilities are $x_s = z$, $x_s = f$, or $x_s = e_i$ for some $i$. So we have
\begin{eqnarray*}
0 &\in& -\lambda_1\lambda_2Z(f) \boxplus \lambda_1\lambda_2X(f) \boxplus \left( \bigboxplus_{i = 1}^k\lambda_1\lambda_2 X_i(f) \right)\\
&=& \lambda_1\lambda_2 \left(-Z(f) \boxplus X(f) \boxplus \left( \bigboxplus_{i = 1}^k X_i(f) \right) \right)
\end{eqnarray*}
from which it follows that $Z(f) \in X(f) \boxplus \left( \bigboxplus_{i=1}^k X_i(f) \right)$.

The proof for weak Grassman-Pl{\"u}cker functions is essentially the same, but in the special case that $k = 1$. This ensures that $|\{x_1, \ldots, x_{r+1}\} \setminus \{y_1, \ldots y_{r-1}\}| = |\{z, f, e_1\}| \leq 3$, so that (GP3)$'$ can be applied instead of (GP3).
\end{proof}

\subsection{From Circuits to Dual Pairs}

We begin with the following result giving a weak version of the modular elimination axiom which holds for pairs of $F$-circuits that are not necessarily modular.

\begin{lemma}
\label{lem:Lemma5.4}
Let $\cC$ be the set of $F$-circuits of a weak $F$-matroid $M$.  Then for all $X,Y \in \cC$, $e,f \in E$ with $X(e)=-Y(e) \neq 0$ and $Y(f) \neq -X(f)$, there is $Z \in \cC$ with 
$f \in \underline{Z} \subseteq (\underline{X} \cup \underline{Y}) \backslash e$.
\end{lemma}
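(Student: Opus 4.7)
My plan is to reduce the statement to the weak modular elimination axiom ${\rm (C3)}'$ via a case analysis on the position of $f$ relative to $\underline{X}$ and $\underline{Y}$. Before splitting into cases, I would first verify that ${\rm supp}(\cC)$ forms the set of circuits of a matroid $\underline{M}$ on $E$; this follows from Lemma~\ref{lem:Delucchi} together with (C2) and ${\rm (C3)}'$, since the latter gives modular circuit elimination on the supports. I would also dispose of the possibility $\underline{X} = \underline{Y}$: in that case (C2) gives $X = \alpha Y$ for some $\alpha \in F^\times$, the condition $X(e) = -Y(e) \neq 0$ forces $\alpha = -1$, and hence $Y(f) = -X(f)$, contradicting the hypothesis.

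When $f \in \underline{X} \Delta \underline{Y}$, say $f \in \underline{X} \setminus \underline{Y}$ without loss of generality, I would invoke classical strong circuit elimination in the matroid $\underline{M}$: applied to $\underline{X}, \underline{Y}$ at $e$ with target $f$ (which lies in the symmetric difference, so the classical statement applies), this yields a circuit $C$ of $\underline{M}$ with $f \in C \subseteq (\underline{X} \cup \underline{Y}) \setminus e$. The bijection between classical circuits of $\underline{M}$ and projective $F$-circuits (a consequence of (C0)--(C2)) then lets me lift $C$ to some $Z \in \cC$ with $\underline{Z} = C$.

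The more delicate case is $f \in \underline{X} \cap \underline{Y}$. Here I plan to induct on $|\underline{X} \cup \underline{Y}|$. If $\underline{X}$ and $\underline{Y}$ form a modular pair in ${\rm supp}(\cC)$, axiom ${\rm (C3)}'$ directly produces $Z \in \cC$ with $Z(e) = 0$ and $Z(g) \in X(g) \boxplus Y(g)$ for all $g$; the hypothesis $Y(f) \neq -X(f)$, with both coefficients nonzero, combined with reversibility (Lemma~\ref{lem:reversibility}) forces $0 \notin X(f) \boxplus Y(f)$, so $Z(f) \neq 0$, and plainly $\underline{Z} \subseteq (\underline{X} \cup \underline{Y}) \setminus e$. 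When $\underline{X}, \underline{Y}$ fail to be modular, the very definition of nonmodularity provides distinct circuits of $\underline{M}$ whose union is properly contained in $\underline{X} \cup \underline{Y}$. I would use this classical matroid-theoretic input applied to ${\rm supp}(\cC)$ to produce an intermediate $F$-circuit $W \in \cC$ containing $e$, then rescale (using (C1)) so that either $(W, Y)$ or $(X, W)$ satisfies the hypotheses of the lemma on a strictly smaller support-union, at which point the inductive hypothesis applies.

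The main obstacle will be the nonmodular subcase: the induction requires producing an intermediate $F$-circuit $W$ whose support contains $e$ and whose union with one of $\underline{X}, \underline{Y}$ is strictly smaller than $\underline{X} \cup \underline{Y}$, while arranging---by exploiting the scaling freedom from (C1)---that both the sign condition analogous to $X(e) = -Y(e)$ and the position hypothesis analogous to $Y(f) \neq -X(f)$ survive for the new pair. Threading this needle (ensuring that the $F$-coefficients on the intermediate circuit can be fixed consistently to preserve the role of $f$) is the technical heart of the proof; I anticipate that here one applies ${\rm (C3)}'$ recursively, possibly along a chain of modular refinements whose existence is guaranteed by the circuit lattice structure of $\underline{M}$.
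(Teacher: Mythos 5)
Your decomposition into the symmetric-difference case (where the hypothesis $Y(f)\neq -X(f)$ is automatic and the conclusion follows from classical strong circuit elimination in $\underline{M}$, lifted via the support bijection) and the intersection case is a valid and clean reduction, and the modular sub-case of the intersection case is handled correctly --- axiom ${\rm (C3)}'$ plus (H1) gives $0\notin X(f)\boxplus Y(f)$, forcing $Z(f)\neq 0$. Your preliminary dispatch of $\underline{X}=\underline{Y}$ is also correct.

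The genuine gap is exactly where you flag it: the non-modular intersection sub-case. You propose finding an intermediate $F$-circuit $W$ through $e$ with (say) $\underline{W}\cup\underline{Y}\subsetneq\underline{X}\cup\underline{Y}$, rescaling via (C1), and invoking induction. But two distinct obstacles are left unresolved. First, existence: non-modularity only gives two distinct circuits with union strictly inside $\underline{X}\cup\underline{Y}$; it does not hand you a circuit $W$ through $e$ with both $\underline{W}\cup\underline{X}$ and $\underline{W}\cup\underline{Y}$ proper --- indeed a circuit $C\subsetneq\underline{X}\cup\underline{Y}$ through $e$ can contain all of $\underline{X}\Delta\underline{Y}$, in which case $C\cup\underline{X}=C\cup\underline{Y}=\underline{X}\cup\underline{Y}$. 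Second, the coefficient bookkeeping: even if you produce such a $W$ (note that $f\in\underline{W}$ is forced, since the negation of the conclusion makes $\{e,f\}$ a cocircuit of $\underline{M}|(\underline{X}\cup\underline{Y})$), rescaling $W$ so that $W(e)=-Y(e)$ gives you no control over whether $Y(f)\neq -W(f)$, and you would need to thread the induction through both pairs $(X,\pm W)$ and $(\pm W,Y)$ simultaneously, deriving $W(f)=-Y(f)$ and $W(f)=X(f)$ from the two failures and then contradicting $X(f)\neq -Y(f)$; none of this is carried out, and neither is the needed structural lemma about chains of circuits in the restriction. Your phrase ``I anticipate that here one applies ${\rm (C3)}'$ recursively, possibly along a chain of modular refinements whose existence is guaranteed by the circuit lattice structure of $\underline{M}$'' is not a known result one can cite; establishing it is essentially the entire content of the lemma.

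For comparison, the paper's proof is a pointer to Anderson--Delucchi's Lemma~5.4 (with a reinterpretation of their conformality relation $X'(g)\leq X(g)$ as meaning $X'(g)=0$ or $X'(g)=X(g)$, and a note that their Proposition~5.1 carries over). That argument proceeds via auxiliary conformal vectors rather than your induction on $|\underline{X}\cup\underline{Y}|$, so even if your sketch were completed it would be a genuinely different route; as it stands, though, it is a plan rather than a proof.
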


\begin{proof}
This follows from the proof of \cite[Lemma 5.4]{AndersonDelucchi}, where $X'(g) \leq X(g)$ in {\em loc.~cit.} is interpreted to mean that $X'(g)=0$ or $X'(g)=X(g)$ (and similarly for $Y'(g)$ and $Y(g)$).  
Note that the proof of \cite[Proposition 5.1]{AndersonDelucchi}, which is used in the proof of Lemma 5.4 of {\em loc.~cit.}, holds {\em mutatis mutandis} for weak matroids over a hyperfield $F$.
\end{proof}

The proof of the following result diverges somewhat from the treatment of the analogous assertion in \cite{AndersonDelucchi}.

\begin{theorem}
\label{thm:Prop5.6}
Let $\cC$ be the $F$-circuit set of a weak $F$-matroid $M$.  There is a unique $F$-signature $\cD$ of $\underline{M}^*$  such that $(\cC, \cD)$ form a weak dual pair of
$F$-signatures of $\underline{M}$. 
If $M$ is a strong $F$-matroid then $(\cC, \cD)$ form a dual pair.
\end{theorem}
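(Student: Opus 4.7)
Given any $F$-signature $\cD$ of $\underline{M}^*$ forming a weak dual pair with $\cC$, each projective $F$-cocircuit $Y \in \cD$ supported on a cocircuit $D$ of $\underline{M}$ is determined up to a global scalar by its ratios $Y(e)/Y(f)$ for $e,f \in D$. Indeed, the classical matroid fact that there exists a circuit $C$ of $\underline{M}$ with $C \cap D = \{e,f\}$---namely the fundamental circuit of $f$ with respect to a basis $B$ of $\underline{M}$ with $B \cap D = \{e\}$---reduces the orthogonality $X \perp Y$ (for any $F$-lift $X \in \cC$ of $C$) to a two-term hypersum, forcing $Y(e)/Y(f) = -X(f)/X(e)$.

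\textbf{Construction of $\cD$.} For each cocircuit $D$ of $\underline{M}$, fix a basepoint $e_0 \in D$, and for each $e \in D \setminus \{e_0\}$ pick a circuit $C_e$ with $C_e \cap D = \{e_0,e\}$ together with an $F$-lift $X_e \in \cC$ of $C_e$ (unique up to scalar by (C2)). Set $Y_D(e_0) := 1$, $Y_D(e) := -X_e(e_0)/X_e(e)$, and $Y_D(f) := 0$ for $f \notin D$. Independence of $Y_D(e)$ from the choice of $C_e$ follows from Lemma~\ref{lem:Lemma5.4} combined with the standard matroid axiom that circuits and cocircuits cannot meet in exactly one element: scaling $X_1, X_2$ so that $X_1(e_0) = -X_2(e_0)$, the hypothesis $X_1(e) \neq -X_2(e)$ would produce $Z \in \cC$ with $e \in \underline{Z} \subseteq (\underline{X_1} \cup \underline{X_2}) \setminus \{e_0\}$, whence $\underline{Z} \cap D \subseteq \{e\}$, a contradiction. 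Setting $\cD := \{c Y_D : c \in F^\times,\ D \text{ a cocircuit of } \underline{M}\}$ yields an $F$-signature of $\underline{M}^*$.

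\textbf{Orthogonality.} To verify (DP3)$'$ (resp.\ (DP3) in the strong case), we construct a weak (resp.\ strong) Grassmann-Pl\"ucker function $\varphi$ directly from $\cC$. Fix an ordered basis $B_0$ of $\underline{M}$, set $\varphi(B_0) := 1$, and extend to all ordered $r$-tuples by defining $\varphi(B)$ as a telescoping product of ratios $-X(b)/X(b')$ along any path of single-element basis exchanges $B_0 \to B$, where $X \in \cC$ is the (C2)-unique $F$-lift of the fundamental circuit witnessing each exchange. Well-definedness (path independence) follows from (C3)$'$ applied to modular pairs of fundamental circuits; the three-term Pl\"ucker relation (GP3)$'$ follows likewise from (C3)$'$ applied to the modular pair naturally occurring in it, and in the strong case (GP3) is derived analogously using (C3) applied to modular families of fundamental circuits (which are always modular with respect to a common basis). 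A direct comparison with Definition~\ref{defn:Cphi} then gives $\cC_\varphi = \cC$ and $\cD_\varphi = \cD$, so Lemma~\ref{lem:Prop4.3} provides the desired weak (resp.\ full) dual pair property. The principal technical obstacle is establishing well-definedness of $\varphi$ and deriving (GP3)$'$/(GP3) from the circuit axioms: both steps require identifying the correct modular pairs (resp.\ families) of fundamental circuits to which modular elimination applies, and carefully tracking the resulting signs and ratios so that they match the Pl\"ucker relation.
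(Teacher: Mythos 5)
Your uniqueness argument and your construction of $\cD$ are essentially sound and match the spirit of the paper's construction (the paper uses a maximal independent set $A \subseteq D^c$ and reference circuits $X_{D,e,f}$ supported in $A \cup \{e,f\}$, but your basepoint variant is equivalent, and your well-definedness argument via Lemma~\ref{lem:Lemma5.4} together with the fact that circuits cannot meet cocircuits in one element is a legitimate route).

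The orthogonality step, however, has a genuine gap. You propose to verify (DP3)$'$/(DP3) by first constructing a weak (resp.\ strong) Grassmann--Pl\"ucker function $\varphi$ directly from $\cC$ and then invoking Lemma~\ref{lem:Prop4.3} on $(\cC_\varphi,\cD_\varphi)$. This is a direct $(C) \to (GP)$ implication, which is nowhere established in the paper: the cryptomorphism cycle is proved as $(GP) \to (C) \to (DP) \to (GP)$ (Theorems~\ref{thm:Prop5.3}, \ref{thm:Prop5.6}, \ref{thm:Prop4.6}), and the step you are trying to prove, $(C) \to (DP)$, is precisely the link in that chain. Your sketch — extend $\varphi$ by a telescoping product along basis-exchange paths, claim path-independence follows from (C3)$'$ on modular pairs of fundamental circuits, claim (GP3) follows from (C3) on modular families — bundles the entire difficulty into unargued assertions, and you say so yourself in the last sentence. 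The claims are plausible (the statement they imply is true, by the cryptomorphism theorem), but deriving the $r{+}1$-term Pl\"ucker relation directly from strong modular elimination, with signs and normalizations matching Definition~\ref{defn:Cphi}, is at least as much work as the theorem being proved. One cannot defer it.

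The paper's actual argument avoids this detour entirely and is much shorter: fix $X \in \cC$, $Y \in \cD$, write $\underline X \cap \underline Y = \{z, e_1, \dots, e_k\}$, pick a basis $I$ of $\underline M \setminus \underline Y$ containing $\underline X \setminus \underline Y$, and set $B = I \cup \{z\}$. Then $X$ together with the (appropriately scaled) fundamental circuits $X_1, \dots, X_{k-1}$ of $e_1, \dots, e_{k-1}$ with respect to $B$ form a modular family of size $k$, and applying (C3) (or (C3)$'$ when $k=2$) produces a $Z$ with $Z(e_k) = X(e_k)$; reading off $Z(z)$ and each $X_i(z)$ from the ratio formula defining $\cD$ then yields $0 \in X \odot Y$ directly. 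The key observation you are missing is that orthogonality can be checked one coordinate at a time using a single well-chosen basis, so no Grassmann--Pl\"ucker function need be constructed.
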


\begin{proof}
Let $D$ be a cocircuit of $\underline{M}$.  As in the proof of \cite[Proposition 5.6]{AndersonDelucchi}, choose a maximal independent subset $A$ of $D^c$.  For $e,f \in D$, choose
$X_{D,e,f} \in \cC$ with support equal to the unique circuit $C_{D,e,f}$ of $\underline{M}$ with support contained in $A \cup \{ e,f \}$.  Define $\cD$ to be the collection of all
$W \in F^E$ with support some cocircuit $D$ such that 
\begin{equation}
\label{eq:Prop5.6}
\frac{W(e)}{W(f)} = -\frac{\noinvolution{X_{D,e,f}(f)}}{\noinvolution{X_{D,e,f}(e)}}
\end{equation}
for all $e,f \in D$.

By the proof of Claim 1 in \cite[Proof of Proposition 5.6]{AndersonDelucchi}, 
the set $\cD$ is well-defined and independent of the choice of $X_{D,e,f}$.

It remains to prove (DP3) (resp. (DP3)$'$). Let $X \in \cC$ and $Y \in \cD$, and if $M$ is a weak but not a strong $F$-matroid assume furthermore that $|\underline X \cap \underline Y| \leq 3$. If $\underline X \cap \underline Y$ is empty then we are done, so suppose that $\underline X \cap \underline Y$ is nonempty.
Since $\underline M$ is a matroid, $\underline X \cap \underline Y$ must contain at least two elements, so let $\underline X \cap \underline Y = \{z, e_1 \ldots e_k\}$ with $k \geq 1$. We may assume without loss of generality that $Y(z) = 1$. Let $I$ be a basis of $\underline M \backslash \underline Y$ including $\underline X \setminus \underline Y$. Then $B = I \cup \{z\}$ is a basis of $\underline M$. For $1 \leq i \leq k-1$ let $X_i \in \Ccal$ with $\underline X_i$ the fundamental circuit of $e_i$ with respect to $B$ and $X_i(e_i) = -X(e_i)$. Let $C$ be the fundamental circuit of $e_k$ with respect to $B$. 

We have $\underline X \setminus B \subseteq \{e_1, \ldots, e_k\}$, and for any $e \in \underline X \cap B$ the fundamental cocircuit of $e$ with respect to $B$ must meet $\underline X$ again, and must do so in some element of $\underline X \setminus B$. Thus $\underline X \subseteq C \cup \bigcup_{i = 1}^{k-1} \underline X_i$, which has height $k$ in the lattice of unions of circuits of $\underline M$. It follows that $X$ and the $X_i$ form a modular family (resp. a modular pair). So there is some $Z \in \cC$ with $Z(e_i) = 0$ for $1 \leq i \leq k-1$ and $Z(f) \in X(f) \boxplus \left(\bigboxplus_{i = 1}^{k-1}X_i(f)\right)$ for any $f \in E$. Applying this with $f = e_k$ gives $Z(e_k) = X(e_k)$. For $1 \leq i \leq k-1$ we have $$\noinvolution{Y(e_i)} = \frac{\noinvolution{Y(e_i)}}{\noinvolution{Y(z)}} = -
\frac{X_i(z)}{X_i(e_i)},$$
so that $X_i(z) = -X_i(e_i)\noinvolution{Y(e_i)} = X(e_i)\noinvolution{Y(e_i)}$. Similarly, we have $Z(z) = -Z(e_k)\noinvolution{Y(e_k)} = -X(e_k)\noinvolution{Y(e_k)}$. This gives 
\begin{eqnarray*}
-X(e_k)Y(e_k) &= Z(z) &\in X(z) \boxplus \left(\bigboxplus_{i = 1}^{k-1}X_i(z)\right) \\
& &= X(z)\noinvolution{Y(z)} \boxplus \left(\bigboxplus_{i=1}^{k-1} X(e_i)\noinvolution{Y(e_i)}\right),
\end{eqnarray*}
from which it follows that $X \perp Y$.
\end{proof}

\subsection{Cryptomorphic axiom systems for $F$-matroids}

We can finally prove the main theorems
from \S\ref{sec:MatroidsOverHyperfields}.
We begin by proving Theorems~\ref{thm:A} and \ref{thm:C} together in the following result:

\begin{theorem} \label{thm:maintheorem}
Let $E$ be a finite set.  There are natural bijections between the following three kinds of objects:
\begin{itemize}
\item[(C)] Collections $\cC \subset F^E$ satisfying {\rm (C0),(C1),(C2),(C3)}.
\item[(GP)] Equivalence classes of Grassmann-Pl{\"u}cker functions on $E$ satisfying {\rm (GP1),(GP2),(GP3)}.
\item[(DP)] Matroids $\underline{M}$ on $E$ together with a dual pair $(\cC,\cD)$ satisfying {\rm (DP1),(DP2),(DP3)}.
\end{itemize}
\end{theorem}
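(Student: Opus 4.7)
The strategy is to assemble four previously established results into a commuting triangle of bijections. Specifically, Lemma~\ref{lem:Prop4.3} and Theorem~\ref{thm:Prop4.6} give a bijection (GP)~$\leftrightarrow$~(DP), while Theorems~\ref{thm:Prop5.3} and \ref{thm:Prop5.6} provide the missing link to (C). Once these are combined, the theorem essentially writes itself.

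For the bijection (GP)~$\leftrightarrow$~(DP), Lemma~\ref{lem:Prop4.3}(2) shows that $\varphi \mapsto (\cC_\varphi, \cD_\varphi)$ takes a (strong) Grassmann-Pl\"ucker function to a dual pair of $F$-signatures of the underlying matroid $\underline{M}_\varphi$. Theorem~\ref{thm:Prop4.6} provides the inverse: given any dual pair $(\cC, \cD)$ of $F$-signatures of a matroid $\underline{M}$, there is (up to scalar) a unique Grassmann-Pl\"ucker function $\varphi$ with $\cC = \cC_\varphi$ and $\cD = \cD_\varphi$. The uniqueness clause in Theorem~\ref{thm:Prop4.6}, together with the explicit formula (\ref{eq:CircuitsFromGP}) and its cocircuit analogue, gives immediately that these two maps are mutually inverse.

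For the bijection (DP)~$\leftrightarrow$~(C), the map (DP)~$\to$~(C) simply forgets $\cD$. Axioms (C0)--(C2) hold for any $F$-signature by definition. To verify (C3), I would first use the bijection of the previous paragraph to identify $\cC$ with $\cC_\varphi$ for some Grassmann-Pl\"ucker function $\varphi$, and then invoke Theorem~\ref{thm:Prop5.3}. Conversely, given $\cC$ satisfying (C0)--(C3), Theorem~\ref{thm:Prop5.6} produces a unique $F$-signature $\cD$ of $\underline{M}^*$ such that $(\cC, \cD)$ is a dual pair; the uniqueness there shows the two maps are mutually inverse.

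Composing the two bijections yields the (GP)~$\leftrightarrow$~(C) correspondence, which explicitly sends $\varphi$ to the set $\cC_\varphi$ of Definition~\ref{defn:Cphi} and inversely extends a circuit set $\cC$ to a dual pair via Theorem~\ref{thm:Prop5.6} before applying Theorem~\ref{thm:Prop4.6}. Commutativity of the resulting triangle of bijections is forced by the uniqueness assertions in Theorems~\ref{thm:Prop4.6} and \ref{thm:Prop5.6}. The main obstacle has already been surmounted in Theorem~\ref{thm:Prop5.3} (deducing the strong modular elimination axiom from the full Grassmann-Pl\"ucker relations) and Theorem~\ref{thm:Prop5.6} (constructing the dual $F$-signature from the $F$-circuit set alone); the role of the present theorem is merely to package those results into a clean cryptomorphism.
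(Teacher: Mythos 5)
Your proposal is correct and relies on exactly the same ingredients as the paper's proof (Lemma~\ref{lem:Prop4.3}, Theorems~\ref{thm:Prop4.6}, \ref{thm:Prop5.3}, and \ref{thm:Prop5.6}); the only difference is organizational --- you build two explicit bijections (GP)~$\leftrightarrow$~(DP) and (DP)~$\leftrightarrow$~(C) and compose, whereas the paper simply exhibits the three maps around the cycle (GP)~$\to$~(C)~$\to$~(DP)~$\to$~(GP) and leaves the mutual-inverse check implicit in the uniqueness clauses. Your version is arguably the more explicit write-up of the same argument.
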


\begin{proof}

(GP)$\Rightarrow$(C): If $\varphi$ is a Grassmann-Pl{\"u}cker function, Theorem~\ref{thm:Prop5.3} shows that the set $C_\varphi$ from Definition~\ref{defn:Cphi} satisfies (C0)-(C3).

\medskip

(C)$\Rightarrow$ (DP): If $\cC$ satisfies (C0)-(C3) and $M$ denotes the corresponding $F$-matroid, 
Theorem~\ref{thm:Prop5.6} shows that there is a unique signature $\cD$ of $\underline{M}^*$ such that $(\cC,\cD)$ is a dual pair of $F$-signatures of $\underline{M}$.

\medskip

(DP)$\Rightarrow$(GP): If $(\cC,\cD)$ is a dual pair of $F$-signatures of a rank $r$ matroid $\underline{M}$, Theorem~\ref{thm:Prop4.6} shows that there is a unique equivalence class of Grassmann-Pl{\"u}cker function $\varphi: E^r \to F$ such that $\cC = \cC_{\varphi}$ and $\cD = \cD_{\varphi}$.
\end{proof}

Similarly we have:
\begin{theorem} \label{thm:maintheorem'}
Let $E$ be a finite set.  There are natural bijections between the following three kinds of objects:
\begin{itemize}
\item[(C)] Collections $\cC \subset F^E$ satisfying {\rm (C0),(C1),(C2),(C3)}$'$.
\item[(GP)] Equivalence classes of Grassmann-Pl{\"u}cker functions on $E$ satisfying {\rm (GP1),(GP2),(GP3)}$'$.
\item[(DP)] Matroids $\underline{M}$ on $E$ together with a dual pair $(\cC,\cD)$ satisfying {\rm (DP1),(DP2),(DP3)}$'$.
\end{itemize}
\end{theorem}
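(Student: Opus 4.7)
The plan is to mimic the structure of the proof of Theorem~\ref{thm:maintheorem}, using the weak variants of all the key intermediate results that have already been established in this section. Specifically, we set up the three arrows (GP)$\Rightarrow$(C), (C)$\Rightarrow$(DP), (DP)$\Rightarrow$(GP) and then check that, composed in either cyclic order, they give the identity on each of the three classes of objects.

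For (GP)$\Rightarrow$(C), given an equivalence class of weak Grassmann--Pl\"ucker functions $\varphi$ on $E$, I would assign to it the set $\cC_\varphi$ from Definition~\ref{defn:Cphi}. Axioms (C0)--(C2) are straightforward from the definition (the supports of the elements of $\cC_\varphi$ are exactly the circuits of $\underline{M}_\varphi$, and $\cC_\varphi$ is by construction closed under scalar multiplication and supports uniquely determine elements up to scaling). The nontrivial axiom is ${\rm (C3)}'$, which is precisely the content of Theorem~\ref{thm:Prop5.3} in the weak case.

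For (C)$\Rightarrow$(DP), given $\cC$ satisfying (C0),(C1),(C2),${\rm (C3)}'$, let $\underline{M}$ be the underlying matroid. Apply Theorem~\ref{thm:Prop5.6} to produce a unique $F$-signature $\cD$ of $\underline{M}^*$ such that $(\cC,\cD)$ is a weak dual pair of $F$-signatures of $\underline{M}$. For (DP)$\Rightarrow$(GP), given a weak dual pair $(\cC,\cD)$ of $F$-signatures of a rank $r$ matroid $\underline{M}$, apply Theorem~\ref{thm:Prop4.6} to obtain a (unique up to equivalence) weak Grassmann-Pl\"ucker function $\varphi\colon E^r\to F$ with $\cC_\varphi=\cC$ and $\cD_\varphi=\cD$.

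It remains to verify that the three maps compose to the identity. For the round trip (GP)$\Rightarrow$(C)$\Rightarrow$(DP)$\Rightarrow$(GP), uniqueness in Theorem~\ref{thm:Prop4.6} (in the weak case) forces the recovered Grassmann-Pl\"ucker function to be equivalent to the original $\varphi$, since $(\cC_\varphi,\cD_\varphi)$ is already a weak dual pair by Lemma~\ref{lem:Prop4.3}(1). Similarly, the round trip starting at (C) recovers the original circuit set because $\cC_\varphi=\cC$ is built into the output of Theorem~\ref{thm:Prop4.6}, and the round trip starting at (DP) is recovered by the uniqueness clause of Theorem~\ref{thm:Prop5.6}, together with the identity $\cD=\cD_\varphi$ produced by Theorem~\ref{thm:Prop4.6}. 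The main potential obstacle is that the weak versions of Theorems~\ref{thm:Prop4.6}, \ref{thm:Prop5.3}, and \ref{thm:Prop5.6} must all be available and consistent; fortunately, these have already been stated in parallel to their strong counterparts (each proof in this section explicitly treats the weak case, with the restriction $|\underline{X}\cap\underline{Y}|\leq 3$ replacing the full modular family hypothesis), so no additional work is required beyond assembling the three implications into a cycle.
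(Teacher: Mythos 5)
Your proposal is correct and takes essentially the same approach as the paper: the paper does not write out a proof of this theorem at all, merely remarking ``Similarly we have:'' after proving the strong version (Theorem~\ref{thm:maintheorem}), and the intended argument is exactly what you describe—assemble the weak cases of Theorems~\ref{thm:Prop5.3}, \ref{thm:Prop5.6}, and \ref{thm:Prop4.6} into the cycle (GP)$\Rightarrow$(C)$\Rightarrow$(DP)$\Rightarrow$(GP). Your explicit round-trip verification is a sensible addition that the paper leaves implicit.
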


\subsection{Duality for $F$-matroids}

In this section, we prove Theorems~\ref{thm:B} and \ref{thm:D}.  We begin with the following preliminary result:

\begin{lemma}
\label{lem:Prop5.8}
Let $\cC \subseteq F^E$ be the set of $F$-circuits of a (weak) $F$-matroid $M$.  Then the set of elements of $\cC^\perp \backslash \{ 0 \}$ of minimal 
non-empty support is exactly the signature $\cD$ of $\underline{M}^*$ given by Theorem~\ref{thm:Prop5.6}.
\end{lemma}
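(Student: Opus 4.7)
The plan is to show a double inclusion between $\cD$ and the support-minimal elements of $\cC^\perp \setminus \{0\}$. By Theorem~\ref{thm:Prop5.6} we already have $\cD \subseteq \cC^\perp$, and every element of $\cD$ has support equal to a cocircuit of $\underline{M}$. So it suffices to prove two things: (a) the minimal nonempty supports occurring in $\cC^\perp$ are exactly the cocircuits of $\underline{M}$; and (b) if $Y \in \cC^\perp$ has support equal to a cocircuit $D$ of $\underline{M}$, then $Y$ is a nonzero scalar multiple of the element of $\cD$ with support $D$ (hence lies in $\cD$, by (C1) applied to $\cD$).

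For (a), the key observation is the classical matroid fact that a subset $S$ of $E$ is a union of cocircuits of $\underline{M}$ if and only if $|C \cap S| \neq 1$ for every circuit $C$ of $\underline{M}$. I will apply this to $S = \underline Y$ for any $Y \in \cC^\perp \setminus \{0\}$: if $\underline X \cap \underline Y = \{e\}$ for some $X \in \cC$ with $\underline X = C$, then the hypersum $\bigboxplus_{i \in E} X(i) \odot \overline{Y(i)}$ degenerates to the singleton $\{X(e) \odot \overline{Y(e)}\}$, which does not contain $0$ since $X(e), Y(e) \in F^\times$, contradicting $X \perp Y$. Hence $\underline Y$ is a nonempty union of cocircuits, so $\underline Y$ contains some cocircuit $D$. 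Since $\cD \subseteq \cC^\perp$ already provides an element with support $D$, the support-minimal elements of $\cC^\perp \setminus \{0\}$ have supports that are precisely the cocircuits of $\underline{M}$.

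For (b), fix a cocircuit $D$ of $\underline{M}$ and $Y \in \cC^\perp$ with $\underline Y = D$. Using the notation from the proof of Theorem~\ref{thm:Prop5.6}, choose a maximal independent subset $A$ of $D^c$ and, for each pair $e,f \in D$, choose $X_{D,e,f} \in \cC$ whose support is the unique circuit of $\underline{M}$ contained in $A \cup \{e,f\}$. Then $\underline{X_{D,e,f}} \cap \underline Y = \{e,f\}$, so the orthogonality relation $X_{D,e,f} \perp Y$ collapses to
\[
0 \in X_{D,e,f}(e) \odot \overline{Y(e)} \boxplus X_{D,e,f}(f) \odot \overline{Y(f)}.
\]
By hyperfield axiom (H1) this forces $X_{D,e,f}(e) \odot \overline{Y(e)} = -X_{D,e,f}(f) \odot \overline{Y(f)}$, and rearranging (and using that $\tau$ is a monoid homomorphism) yields exactly the cross-ratio identity (\ref{eq:Prop5.6}) defining membership of $Y$ in $\cD$. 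Combined with (C1), this places $Y \in \cD$.

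The only real subtlety is the matroid lemma used in step (a)—characterizing unions of cocircuits via the $|C \cap S| \neq 1$ condition—and verifying that the hypersum collapses to a single nonzero term when $|\underline X \cap \underline Y| = 1$; everything else is a direct combination of Theorem~\ref{thm:Prop5.6}, the definitions, and (H1). I therefore expect no serious obstacle, and the argument is a clean hyperfield analogue of the corresponding passage in the classical theory of oriented matroids.
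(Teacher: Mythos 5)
Your proof is correct and takes essentially the approach that the paper defers to (Anderson--Delucchi, Proposition~5.8): part~(a) characterizes the minimal supports in $\cC^\perp\setminus\{0\}$ as cocircuits via the union-of-cocircuits criterion ($|C\cap S|\neq 1$ for all circuits~$C$), and part~(b) collapses the orthogonality $X_{D,e,f}\perp Y$ to a two-term hypersum, from which axiom~(H1) forces the defining cross-ratio identity of~$\cD$. The matroid lemma you invoke and the singleton-hypersum degeneration are both correct.

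One point deserves flagging, however. In part~(a) you write that $\cD\subseteq\cC^\perp$ ``by Theorem~\ref{thm:Prop5.6}.'' That theorem only guarantees $(\cC,\cD)$ is a \emph{weak} dual pair when $M$ is merely a weak $F$-matroid; this gives $(\mathrm{DP3})'$ (orthogonality when $|\underline X\cap\underline Y|\leq 3$), not full orthogonality. Full orthogonality $\cD\subseteq\cC^\perp$ is exactly $(\mathrm{DP3})$, which by Theorem~\ref{thm:C} is equivalent to $M$ being \emph{strong}. Your part~(b) survives for weak $F$-matroids because it only uses $X_{D,e,f}\perp Y$ with $|\underline{X_{D,e,f}}\cap\underline Y|=2\leq 3$; but part~(a)'s appeal to $\cD\subseteq\cC^\perp$ genuinely requires the strong case. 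Since the lemma is stated for ``(weak)'' $F$-matroids, you should be explicit that this is where the weak/strong distinction bites; if $M$ is weak but not strong, the element of $\cD$ supported on a cocircuit $D$ need not lie in $\cC^\perp$ at all, and so the claimed identity $\mathrm{SuppMin}(\cC^\perp\setminus\{0\})=\cD$ is not obviously available by this route. (The paper's own proof, which simply cites the Anderson--Delucchi argument predating the weak/strong distinction, glosses over the same point.)
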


\begin{proof}
This is proved exactly like \cite[Proof of Proposition 5.8]{AndersonDelucchi}.
\end{proof}

\begin{proof}[Proof of Theorem~\ref{thm:B}:]
This follows from Theorem~\ref{thm:maintheorem}, Lemma~\ref{lem:Lemma3.2}, and Proposition~\ref{lem:Prop4.3} and \ref{lem:Prop5.8}, exactly as in \cite[Proof of Theorem B]{AndersonDelucchi}.
\end{proof}

\begin{proof}[Proof of Theorem~\ref{thm:D}:]
(cf.~\cite[Proof of Theorem D]{AndersonDelucchi}) This follows from Theorem~\ref{thm:maintheorem} and Lemmas~\ref{lem:MinorLemma} and \ref{lem:Prop4.3}.
\end{proof}

\subsection{Proof of Theorem~\ref{thm:C3primeprime}} \label{sec:C3primeprime}

{ 
In this section, we prove Theorem~\ref{thm:C3primeprime}.

\begin{proof}[Proof of Theorem~\ref{thm:C3primeprime}:]
Suppose first that (C3) holds.  Then in particular ${\rm (C3)}'$ holds, ${\mathcal C}$ is the set of $F$-circuits of a weak $F$-matroid $M$, and
the support of ${\mathcal C}$ is the set of circuits of the underlying matroid $\underline{M}$.
Let $X \in {\mathcal C}$ and let $B$ be a basis of $\underline{M}$.  Write $\underline{X} \backslash B = \{ e_1,\ldots,e_k \}$ and set $e = e_1$.
For $1 \leq i \leq k$ let $X_i = X(e) X_{B,e_i}$.
One checks easily that $X_2,\ldots,X_k$ and $-X$ satisfy the hypotheses of (C3), and thus 
there is an $F$-circuit $Z$ such that $Z(e_i) = 0$ for $1 \leq i \leq k$ and $Z(f) \in -X(f) \boxplus X_2(f) \boxplus \cdots \boxplus X_k(f)$
for every $f \in E$.

We must have $Z(e) \neq 0$ or else $\underline{Z} \subseteq B$, which is impossible.
As $f \in B$ for all other $f \in \underline{Z}$ and $Z(e) = -X(e)$, we must have $Z = -X(e)X_{B,e} = -X_1$.
Thus $-X_1(f) \in -X(f) \boxplus X_2(f) \boxplus \cdots \boxplus X_k(f)$, which implies that 
$X(f) \in X_1(f) \boxplus X_2(f) \boxplus \cdots \boxplus X_k(f)$ for all $f \in E$, establishing ${\rm (C3)}''$.

Now assume that ${\rm (C3)}''$ holds.  
Suppose we have a modular family $X, X_1, \ldots X_k$ and elements $e_1 \ldots e_k \in E$ as in ${\rm(C3)}$.
As in the proof of Theorem~\ref{thm:Prop5.3}, if $z$ is any element of $\underline X \setminus \bigcup_{i = 1}^k \underline X_i$, $A = \underline X \cup \bigcup_{i = 1}^k \underline X_i$, and $N = \underline M |A$, then
$I = A \setminus \{z, e_1, \ldots e_k\}$ is a basis of $N$.
Let $J$ be a basis of $M/A$.  Then $B = I \dot \cup J$ is a basis of $M$, 
and $X_i = -X(e_i) X_{B,e_i}$ for all $i=1,\ldots,k$.

Let $Z \in \cC$ with $\underline Z$ given by the fundamental circuit of $z$ with respect to $I$ and with $Z(z) = X(z)$. It is clear that $Z(e_i) = 0$ for $1 \leq i \leq k$, and it follows by inspection that $X(f) \in Z(f) \boxplus \left(\bigboxplus_{i=1}^k X_i(f)\right)$ for all $f \in E$, establishing (C3).
\end{proof}
}

\appendix

\section{Errata to \cite{AndersonDelucchi}}

Since we rely so heavily in this paper on \cite{AndersonDelucchi}, we include the following list of errata.

\medskip

Most of the errors in \cite{AndersonDelucchi} are minor and localized, but there is one major problem which affects the paper globally.
(A similar error is present in the arXiv versions 1 through 3 of the present paper.)
The difficulty is in the third paragraph of the proof of Claim 3 on page 831.
The authors write that if $X$ is not orthogonal to $W$ then neither is $X'$. But in order for that conclusion to follow, 
one would need to know that $X'$ agrees with $X$ on the domain of $X'$.  However, there is no reason to expect this to hold.
Indeed, as Example~\ref{eg:danscex} shows, Theorem A in \cite{AndersonDelucchi} does not hold. 
\medskip

In addition, we mention the following less serious mistakes:

\begin{enumerate}
\item In Definition 2.4, there should be an additional axiom that the zero vector is not a phased circuit.  And axiom (C1) should say ${\rm supp}(X) \subseteq {\rm supp}(Y)$ rather than ${\rm supp}(X) = {\rm supp}(Y)$.
\item In the proof of Lemma 3.2, $E \backslash (X \cap Y)$ should be $E \backslash (X \cup Y)$.
\item In the first bulleted point of \S{4.2} (top of page 822), $b_0$ should be $b_1$.
\item In the statement of Lemma 5.2, $X(e)=Y(e)$ should be $X(e)=-Y(e)$ and ${\mathcal C}$ should be ${\mathcal C}_\varphi$.   Note that Lemma 5.2 is not actually used in any of the subsequent arguments.
\item In the statements of Proposition 5.3 and Lemma 5.4, the hypothesis $X(f) \neq Y(f)$ should be replaced with $X(f) \neq -Y(f)$.  And in the third line from the end of the proof of Lemma 5.4, $X(f) \neq Y(f) = Y'(f)$ should be $-X(f) \neq Y(f) = Y'(f)$. 
\item In Lemmas 4.5 and Proposition 5.6, the correct hypotheses are that ${\mathcal C}$ and ${\mathcal D}$ form a dual pair of circuit signatures for some matroid $M$.  This is all that is used in the proofs, and if one makes the stronger assumption in Proposition 5.6 that ${\mathcal C},{\mathcal D}$ are the phased circuits (resp.~cocircuits)
of a phased matroid then the proof of Corollary 5.7 is incomplete. 
\item In the second line of the proof of Proposition 5.3, the authors refer to the cocircuits of the phased matroid defined by $\varphi$, but one doesn't actually know at this point in their chain of reasoning that the modular elimination axiom holds for what eventually ends up being the phased matroid defined by $\varphi$.  Their proof is nevertheless correct.
\end{enumerate}

\begin{remark}
In Definition 2.4, the authors write $Z(g) \leq \max\{X(g),Y(g)\}$ in the ``else'' case, but this inequality can be replaced with equality; this follows from the ``symmetric difference'' part of \cite[Lemma 2.7.1]{WhiteCG}.
The latter result also implies that axiom (ME) in Definition 2.4 (and also in Proposition A.21) can be replaced with a stronger axiom in which one asks for a {\bf unique} $Z \in {\mathcal C}$ with the stated properties.
\end{remark}


\bibliographystyle{alpha}
\bibliography{Hyperfield}


\end{document}